\documentclass[aos]{imsart}

\RequirePackage{amsthm,amsmath,amsfonts,amssymb}
\RequirePackage[numbers,sort&compress]{natbib}
\RequirePackage[colorlinks,citecolor=blue,urlcolor=blue]{hyperref}
\RequirePackage{graphicx}

\startlocaldefs
\theoremstyle{plain}

\newtheorem{theorem}{Theorem}[section]

\theoremstyle{remark}
\newtheorem{definition}[theorem]{Definition}

\renewcommand{\cite}{\citet*}

\newcommand{\bX}{\mathbf{X}}

\newcommand{\bZ}{\mathbf{Z}}
\newcommand{\bD}{\mathbf{D}}

\newcommand{\bbD}{\mathbb{D}}
\newcommand{\bbP}{\mathbb{P}}
\newcommand{\bM}{\mathbf{M}}

\newcommand{\bG}{\mathbf{G}}

\newcommand{\be}{\mathbf{e}}
\newcommand{\br}{\mathbf{r}}

\newcommand{\bSi}{\boldsymbol{\Sigma}}

\newcommand{\bA}{{\bf A}}
\newcommand{\bB}{{\bf B}}
\newcommand{\bC}{{\bf C}}
\newcommand{\bS}{{\bf S}}

\newcommand{\bT}{{\bf T}}

\newcommand{\bx}{{\bf x}}
\newcommand{\by}{{\bf y}}

\newcommand{\bz}{{\bf z}}
\newcommand{\bd}{{\bf d}}
\newcommand{\bp}{{\bf p}}
\newcommand{\balpha}{{\boldsymbol\alpha}}
\newcommand{\bbeta}{{\boldsymbol\beta}}

\newcommand{\mb}{\mathbf}

\newcommand{\bss}{{\mb \Theta_n^{1/2}}}

\newcommand{\rtr}{{\rm tr}}
\newcommand{\rre}{{\rm E}}

\newcommand{\bI}{{\bf I}}

\renewcommand{\(}{\left(}
\renewcommand{\)}{\right)}

\newcommand{\rCov}{{\rm Cov}}

\DeclareMathOperator{\tr}{tr}         
      
        \DeclareMathOperator{\E}{E}

\numberwithin{equation}{section}  

\newtheorem{thm}{Theorem}[section]
\newtheorem{lem}{Lemma}[section]
\newtheorem{rem}{Remark}[section]
\newtheorem{cor}{Corollary}[section]

\allowdisplaybreaks      

\newcommand{\ycomment}{}

\endlocaldefs

\begin{document}

\begin{frontmatter}
\title{Spectral Analysis of Gram Matrices with Missing at Random Observations: Convergence, Central Limit Theorems, and Applications in Statistical Inference}
		\runtitle{Covariance matrix under missing observations}

\begin{aug}
\author[A]{\fnms{Huiqin} \snm{ Li}
			\ead[label=e1]{lihq118@nenu.edu.cn}},
\author[B]{\fnms{Guangming} \snm{ Pan}
			\ead[label=e2]{gmpan@ntu.edu.sg}}{\ycomment ,}
\author[A]{\fnms{Yanqing} \snm{ Yin}
			\ead[label=e3]{yinyq@cqu.edu.cn}}			
			\and
\author[C]{\fnms{Wang} \snm{ Zhou}
			\ead[label=e4]{wangzhou@nus.edu.sg}}
			
\runauthor{Li, Pan, Yin and Zhou.}
\address[A]{School of Mathematics and Statistics,
			Chongqing University\printead[presep={,\ }]{e1,e3}}

\address[B]{Division of Mathematical Sciences, Nanyang Technological University \printead[presep={,\ }]{e2}}

\address[C]{Department of Statistics and Data Science,  National University of Singapore \printead[presep={,\ }]{e4}}
\end{aug}

\begin{abstract}
Motivated by the statistical inference using the Gram matrix in the context of missing at random observations, this paper investigates the spectral properties of the random matrices $\mb S_n=\frac{1}{n}\mb Z\mb Z^*$, where $\mb Z=\mb D\circ(\boldsymbol{\Sigma^{1/2}}\mb X)$ represents a Hadamard random matrix with entries determined by independent Bernoulli variables $\mb D$. Operating within the high-dimensional framework, we establish the convergence of the empirical spectral distribution of $\mb S_n$ to a well-defined limiting distribution. In addition, we explore the impact of the missing mechanism on the second-order properties of the spectral distribution of the Gram matrix $\mathbf{S}_n$. We establish the central limit theorem for the linear spectral statistics of $\mathbf{S}_n$, shedding light on their fluctuations. Surprisingly, our analysis reveals that even in the ideal Gaussian distribution scenario, the fluctuations of statistics generated by eigenvalues are influenced by the eigenvectors of the population covariance matrix in the missing-at-random case. This discovery uncovers a remarkable phenomenon that starkly contrasts with the classical case. Subsequently, we demonstrate the practical application of our central limit theorem in hypothesis testing for the population covariance matrix.  \end{abstract}

\begin{keyword}[class=MSC]
\kwd[Primary ]{62H15}
\kwd{62B20}
\kwd[; secondary ]{62D10}
\end{keyword}

\begin{keyword}
sample covariance matrix, missing observations, high-dimensionality, limiting spectral distribution,  central limit theorem, random matrix theory
\end{keyword}

\end{frontmatter}
\section{Introduction and motivation}
	
The analysis of high-dimensional data has gained significant traction across various fields, including genomics, finance, image processing, and social networks. See, for instance, \cite{ChenQ10T,CaiL14T,Wang2015high,Fan2020}. However, real-world data collection processes are often imperfect, resulting in missing observations that can profoundly affect the reliability and validity of statistical analysis.

Missing data poses a pervasive problem in numerous domains. For example, in healthcare, patient data frequently contains missing values due to incomplete medical records or patients dropping out of studies. Financial datasets may exhibit missing data due to irregular trading patterns or incomplete financial records. Similarly, in the social sciences, non-response in surveys or incomplete data collection processes can lead to missing data. These instances highlight the practical relevance of addressing the challenges posed by missing data and emphasize the necessity for robust methodologies to handle such scenarios. A pragmatic approach to addressing missing data is the exclusion of variables with incomplete observations from the analysis, thereby restricting the analysis to a subset of fully observed variables. However, in the context of gene expression data, where a substantial proportion, approximately 90\%, of genes exhibit missing values, this strategy would yield an insufficient number of variables to conduct a valid statistical analysis. Disregarding variables with only a few missing observations would also lead to a squandering of valuable information that could otherwise be utilized effectively. Considering these challenges, alternative methodologies need to be explored to handle missing data in the fields such as gene expression analysis effectively, ensuring the utilization of available information while maintaining statistical integrity. Significant progress has been made in this field, with numerous efforts dedicated to advancing the research in this direction. Noteworthy contributions can be found in the works of \cite{lounici2014high,chen2021bridging,tony2019high} and references therein.

In the realm of high-dimensional data analysis, Random matrix theory (RMT) assumes a pivotal role by offering indispensable tools and theoretical frameworks to comprehend intricate matrices encountered across diverse applications. RMT has demonstrated its applicability in a broad spectrum of fields, including wireless communications, signal processing, finance, genomics, and machine learning.  By exploring the spectral properties of random matrices that arise in various high-dimensional statistical scenarios, RMT equips researchers with powerful statistical tools to make inference about the underlying data structure and extract meaningful information. This becomes particularly valuable when dealing with large dimensions and when the matrix entries exhibit specific randomness or correlation structures. For further exploration of this research direction, we direct readers to the works of \cite{Bai2004a,Hu2019}, and other relevant sources.

In this study, our primary objective is to establish a profound connection between random matrix theory (RMT) and the analysis of incomplete data. We achieve this by investigating the spectral properties of high-dimensional Gram matrices in the presence of missing observations. Our goal is to derive theoretical results that thoroughly characterize the behavior of the spectrum of these matrices under missing-at-random mechanisms. By harnessing the tools and insights offered by RMT, we can gain a deeper understanding of how missing probabilities impact both the first and second-order limiting properties of the Gram matrix.

Our work represents a significant expansion upon the existing literature, effectively addressing a crucial research gap in this field. While previous studies, exemplified by the work of \cite{JurczakR17S}, have made valuable contributions by establishing first-order results for the spectrum of a covariance-type matrix under missing observations, our research takes a notable stride forward. We delve into the application of random matrix theory in the context of high-dimensional statistics, seeking to unlock its full potential and provide a comprehensive understanding of the effects of missing data. This advancement is pivotal in enhancing the accuracy and reliability of statistical inference, particularly in complex and high-dimensional settings.
The contributions of this paper can be summarized as follows:
\begin{enumerate}
	\item We offer a comprehensive examination of the spectral properties of high-dimensional Gram matrices when confronted with missing observations, thereby extending the current knowledge derived from random matrix theory. Due to the absence of a prior bound on the spectral norm, we employ a combination of $\epsilon$-net arguments and the analytic method of Stieltjes transform to achieve the separation of the spectrum.
	\item We derive explicit expressions for the eigenvalue distribution and the central limit theorem (CLT) for the limiting spectral statistics (LSS) under missing at random mechanisms. These expressions provide a deeper understanding of how missingness influences the spectral behavior of the Gram matrices.
	\item We discuss the test procedure designed specifically for testing the equality of the population covariance matrix with a given matrix. This procedure offers an approach to assessing covariance structures in the presence of missing data. 
\end{enumerate}

In this paper, we adopt the following notation conventions:
\begin{itemize}
	\item The spectral norm of a matrix $\mathbf{A}$ is denoted as $|\mathbf{A}|_s$ and the {\ycomment determinant} of a matrix $\mathbf{A}$ is denoted as $\det\mathbf{A}$.
	\item 
The Euclidean norm of a vector $\boldsymbol{\alpha}$ is represented as $|\boldsymbol{\alpha}|$. 
\item 
The symbols $c$ and $C$ denote absolute constants, which may vary in different instances.
\item 
The notation $\mathbf{e}_k$ refers to the $k$-th column of an identity matrix.
\item 
The transpose of a vector $\boldsymbol{\alpha}$ is denoted as $\boldsymbol{\alpha}^T$.
\item If for any $\ell>0$, $1-{\rm P}(A_n)=o(n^{-\ell})$ as $n\to\infty$, then we say $A_n$ occurs with high probability (w.h.p). 
\end{itemize}
Please note that these notation conventions have been employed consistently throughout the paper.

The paper is structured as follows. Section \ref{mr} presents theoretical results on the spectral properties of the Gram matrix under missing at random. In Section \ref{appli}, we examine the test for covariance structure in the presence of missing data. Technical proofs are deferred to Section \ref{tecproof}, with supplementary details provided in the supplementary material. Section \ref{lemmas} contains a list of necessary lemmas to be utilized.

\section{Main theoretical results}\label{mr}
This section presents our main theorem. Firstly, we introduce the Gram random matrix model that we investigate for handling missing at random data. Additionally, we provide background information on random matrix theory. Subsequently, we present definitions, model assumptions, and lemmas that play critical roles in establishing our theorem. Finally, we present our main theorems individually.
	\subsection{Model description}
We begin by introducing some primary knowledge in random matrix theory.  
	Suppose ${\bf A_n}$ is an $n\times n$ Hermitian matrix. Then the {\bf empirical spectral distribution} (ESD) of ${\bf A_n}$ is defined by
	\begin{align*}
		F^{{\bf A_n}}(x)=\frac1n\sum_{j=1}^nI(\lambda_j\le x).
	\end{align*}
	In the asymptotic scenario where $n$ tends to infinity, if the limit of $F^{\bA_n}(x)$ exists, it is denoted as the {\textbf{Limit Spectral Distribution}} (LSD). This represents the distribution that emerges as a result of this convergence in the context of spectral analysis.
	The {\bf Stieltjes transform} of $F^{{\bf A}}(x)$ is given by
	$$m_{F^{\bA}}(z)=\int\frac1{x-z}dF^{{\bf A}}(x)=\frac1n\rtr({\bf A}-z\bI_n)^{-1}, \quad z\in\mathbb{C}^+.$$
Consider the random vector $\by=\bSi^{1/2}_n\bx$, where $\bx$ represents a $p$-dimensional random vector whose entries are independent and identically distributed (i.i.d.) random variables with zero mean and unit variance. In this context, $\by$ typically serves as a model for a $p$-dimensional population with covariance matrix $\mb\Sigma_n$.
The well-known M-P law, originally presented by  \cite{marchenko1967distribution} and extensively studied in subsequent works such as \cite{Wachter78S} and  \cite{Silverstein95S}, establishes that when the ratio $p/n$ tends to $\rho\in(0,\infty)$, the distribution $F^{\mb \Sigma_n}$ converges to {\ycomment $\mathfrak{H}$}, and the population covariance matrix is bounded in the spectral norm, the ESD of the sample covariance matrix converges weakly to a non-random probability density function. 

	To incorporate missing at random mechanisms, we introduce the random vector $\bd=(d_1,\cdots,d_p)^T$, where each $d_j$ is an independent Bernoulli random variable $B(1,\mathfrak{p}_j)$, for $j=1,\cdots,p$. Consequently, we consider the Hadamard product of the random vector $\bz=\bd\circ\by$ as the population with missing probability. This population is referred to as the \emph{incomplete population}, distinguishing it from the \emph{complete population} $\by$. To conduct statistical inference, we draw a sample of $n$ observations from the incomplete population, denoted as $\bz_1,\bz_2,\cdots,\bz_n$. Accordingly, letting $\bd_j=(d_{1j},\cdots,d_{pj})^T$ and $\bbD_j={\rm diag}\(d_{1j},\cdots,d_{pj}\)$, we compute the corresponding Gram matrix as
$$\bS_n=\frac 1n\sum_{j=1}^n\bz_j\bz_j^T=\frac1n\bZ\bZ^T=\frac 1n\sum_{j=1}^n\bbD_j\by_j\by_j^T\bbD_j=\frac1n\left(\bD_n\circ\bSi_n^{1/2}\bX_n\right)\left(\bD_n\circ\bSi_n^{1/2}\bX_n\right)^T,$$
where $\bX_n=(\bx_1,\cdots,\bx_n)=(x_{jk})$ and $\bD_n=(d_{jk})$.
{\ycomment In the context of statistical applications, we introduce an auxiliary matrix $\mb \Theta_n$. This auxiliary matrix is arbitrary, as long as it satisfies the specific assumptions outlined in our main results. The form of this matrix may vary case by case in statistical applications, depending on the particular problem under consideration. An example illustrating this variability is provided in Corollary \ref{th3}, where the corresponding result is applied to covariance matrix testing, offering insight into this aspect.} Our main theorem addresses the LSD and CLT for LSS, with a particular emphasis on the product matrix $\bS_n\mb\Theta_n$. Note that one shall obtain the corresponding results of $\bS_n$ by setting $\mb\Theta_n=\bI$.
	
	\subsection{Definitions and assumptions}
Prior to presenting our main theorems, we will provide definitions and assumptions. Define $\bbP=\rre\bbD_1={\rm diag}\(\mathfrak{p}_1,\cdots,\mathfrak{p}_p\)$ and
	\begin{align*}
		\bbP^{(2)}=&\ \rre\(\bbD_j-\bbP\)^2=\bbP-\bbP^2, \quad
		\bbP^{(3)}=\ \rre\(\bbD_j-\bbP\)^3=\bbP-3\bbP^2+2\bbP^3,\\
		\bbP^{(4)}=&\ \rre\(\bbD_j-\bbP\)^4=\bbP-4\bbP^2+6\bbP^3-3\bbP^4, \quad \bbP^{(5)}=\(\bbP^{(4)}-3\(\bbP^{(2)}\)^2\).
	\end{align*}
{\ycomment Recall that a random variable $X$ is considered sub-Gaussian if there exists a constant $C$ such that: 
\begin{align*}
\E \exp \left(X^2 / C^2\right) \leq 2.
\end{align*}  We define its sub-Gaussian norm as:\begin{align*}
\|X\|_{g}=\inf \left\{c>0: \E\exp \left({X^2}/{c^2}\right) \leq 2\right\}.
\end{align*} }
We now introduce the following assumptions.
\begin{description}
\item [Assumption A:] The random variables $x_{jk},j,k=1,2,\cdots$ are independent and identically sub-Gaussian with zero mean, unit variance and sub-Gaussian norm $\|x\|_g$. {\ycomment Let $\E |x_{11}|^4=\nu_4.$}
\item  [Assumption B:] {\ycomment The matrices $\bX_n$ and $\bD_n$ are independent, and $\mb \Theta_n$ is a non-random matrix. }
\item  [Assumption C:] The convergence regime is $y_n=p/n\rightarrow y\in (0, +\infty)$.
\item  [Assumption D:] The spectral distribution $H_n$ of the matrix $\mb \Psi_n=\bss\bT_n\bss$ where $\bT_n=\bbP\bSi_n\bbP+\bbP^{(2)}\circ\bSi_n$ weakly converges to a probability distribution $H$, as $p\to\infty$, referred as the incomplete  population spectral distribution ({\sl IPSD}). Let the spectral norm of the sequence $(\bSi_n\mb\Theta_n)$ be uniformly bounded.
\end{description}
{\ycomment
\begin{rem}
	We would like to discuss two points in order. Firstly, the assumption of sub-Gaussianity is connected to Lemma 2.3, which plays a crucial role in our analysis. In the context of missing at random scenarios, establishing a high-probability bound for the spectral norm of the Gram matrix $\mb S_n$ proves to be challenging under general moment conditions. This difficulty arises due to the limitations of applying the standard moment method. The sub-Gaussianity assumption is instrumental in overcoming this challenge by ensuring that the entries possess controlled tails. This controlled tail behavior is crucial for the application of concentration inequalities. With the sub-Gaussianity assumption, we can establish Lemma 2.3 using an $\varepsilon$-net argument. It's worth noting that the use of sub-Gaussianity is a common practice in high-dimensional statistics, serving to control tail probabilities and ensure the concentration of random matrices. Furthermore, we would like to emphasize that while the existence of a high-probability bound for $\mb S_n$ under general distributions is not guaranteed, our CLT results would seamlessly align with any advancements in this direction, provided such bounds can be established. 
	
	Secondly, the specific matrix $\mb T_n$ introduced in Assumption D takes on the role of the population covariance matrix for incomplete data. This assignment is elucidated through the following calculations:	\begin{align*}
		&\E \bbD_1\by_1\by_1^T\bbD_1\\
		=&\E \(\(\bbD_1-\bbP\)\by_1\by_1^T\(\bbD_1-\bbP\)+\(\bbD_1-\bbP\)\by_1\by_1^T\bbP+\bbP\by_1\by_1^T\(\bbD_1-\bbP\)+\bbP\by_1\by_1^T\bbP\)\\
		=&\bbP^{(2)}\circ\bSi_n+\bbP\bSi_n\bbP.
	\end{align*}
	Under the scenario of complete data where $\mathfrak{p}_j=1, 1\leq j\leq p$, we have $\bbP=\bI_p$ and $\bbP^{(2)}=\mb 0$, resulting in the degeneration of $\mb T_n$ to $\bSi_n$.
		\end{rem} }

\subsection{Lemmas on quadratic forms}
This subsection is dedicated to presenting lemmas concerning the quadratic forms of the incomplete population $\bz$. These lemmas provide valuable insights into the effects of missing data thus play a central role in analyzing the spectral properties of the missing data model. For the $p$-dimensional random vector $\bz \in \mathbb{R}^{p}$, let us recall the regular complex matrix field $\mathbb {C}^{p\times p}$, which consists of all $p$-dimensional nonrandom matrices. We define $$\mathcal{Q}(\bz,\mathcal{M}) = \bz^T{\mathcal {M}}\bz, \quad \mathcal {M}\in \mathbb {C}^{p\times p}.$$
Within this context, it is our primary objective to present the mean and variance-covariance profile of $\mathcal{Q}(\bz,\cdot)$. These profiles play a fundamental role in various aspects of our analysis. Firstly, they are instrumental in establishing the convergence of the ESD. Secondly, by leveraging the variance-covariance profiles, we can ascertain the limiting parameters that govern the second-order fluctuations of the Gram matrix. 
	\begin{lem}[First two moments of $\mathcal{Q}(\bz,\cdot)$]\label{lec2}
	Assuming $\bz$ follows our model, we consider any $p\times p$ nonrandom matrix $\bA$ and a nonrandom symmetric matrix $\bB$. In this context, we establish the following result:	
	 $\rre\mathcal{Q}(\bz,\bA)=\tr\bA\bT_n$ and
$$\rre\left[\(\mathcal{Q}(\bz,\bA)-\tr\bA\bT_n\)\(\mathcal{Q}(\bz,\bB)-\tr\bB\bT_n\)\right]=\mathcal{I}(\mb A,\mb B,\mb\Sigma_n,\mathbb{P})+(\nu_4-3)\mathcal{II}(\mb A,\mb B,\mb\Sigma_n,\mathbb{P}),$$ where the terms that are independent of the kurtosis of the underlying distribution are given by $\mathcal{I}(\mb A,\mb B,\mb\Sigma_n,\mathbb{P})=\mathcal{I}_1(\mb A,\mb B,\mb\Sigma_n,\mathbb{P})+\mathcal{I}_2(\mb A,\mb B,\mb\Sigma_n,\mathbb{P})$ with
		$\mathcal{I}_1(\mb A,\mb B,\mb\Sigma_n,\mathbb{P})=2\rtr\(\bT_n\bA\bT_n \bB \)$ and
	\begin{align*}
	&\mathcal{I}_2(\mb A,\mb B,\mb\Sigma_n,\mathbb{P})\\\notag
	=&4\rtr\left[\bbP^{(2)}\(\bA\circ\bB\)\bbP^{(2)}\(\bSi_n\circ\bSi_n\)\right]+2\rtr\left[\(\bA\circ\bbP^{(2)}\)\bSi_n\(\bB\circ\bbP^{(2)}\)\bSi_n\right]\\
	&+6\rtr\(\bbP^{(3)}\circ\bA\circ\bSi_n\circ\(\bB\bbP\bSi_n\)  \)+2\rtr\(\(\bbP^{(2)}\circ\bA\)\bSi_n\bbP \bB\bbP\bSi_n \)\\
	&+2\rtr\(\(\bbP^{(2)}\circ\bB\)\bSi_n\bbP \bA\bbP\bSi_n\)+3\rtr\left[\bbP^{(3)}\circ\bB\circ\bSi_n\circ\(\bA\bbP\bSi_n+\bA^T\bbP\bSi_n\)\right]\\
   &+3\rtr\left[\bA\circ\bSi_n\circ\bB\circ\bSi_n\circ\bbP^{(5)}\right]+4\rtr\left[\bbP^{(2)}\circ\(\bA\bbP\bSi_n+\bA^T\bbP\bSi_n\)\circ\(\bB\bbP\bSi_n\)\right],	
\end{align*}	
while the terms that are dependent on the kurtosis of the underlying distribution are expressed as
	\begin{align*}
	&\mathcal{II}(\mb A,\mb B,\mb\Sigma_n,\mathbb{P})\\\notag
	=&\rtr\left[\(\bSi_n^{1/2}\(\bbP \bA\bbP+\bbP^{(2)}\circ \bA\)\bSi_n^{1/2}\)\circ\(\bSi_n^{1/2}\(\bbP \bB\bbP+\bbP^{(2)}\circ \bB\)\bSi_n^{1/2}\)\right]\\
	&+2\rtr\left[\bbP^{(2)}\(\bA\circ\bB\)\bbP^{(2)}\(\bSi_n^{1/2}\circ\bSi_n^{1/2}\)^2\right]+\rtr\left[\bbP^{(5)}\circ\bA\circ\bB\circ\(\bSi_n^{1/2}\circ\bSi_n^{1/2}\)^2\right]\\
	&+2\rtr\left[\(\bSi_n^{1/2}\circ\bSi_n^{1/2}\circ\bSi_n^{1/2}\)\(\bA\circ\bbP^{(3)}\)\bB\bbP\bSi_n^{1/2}\right]\\
	&+\rtr\left[\(\bSi_n^{1/2}\circ\bSi_n^{1/2}\circ\bSi_n^{1/2}\)\(\bB\circ\bbP^{(3)}\)\(\bA+\bA^T\)\bbP\bSi_n^{1/2}\right]\\
	&+2\rtr\left[\(\bSi_n^{1/2}\circ
	\bSi_n^{1/2}\)\bbP^{(2)} \(\(\bA\bbP\bSi_n^{1/2}+\bA^T\bbP\bSi_n^{1/2}\)\circ\(\bB\bbP\bSi_n^{1/2}\)\)\right].	
\end{align*}	

	\end{lem}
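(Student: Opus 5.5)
The plan is to condition on the missingness pattern and decompose around its mean. Write $\bz=\bbD\by$ with $\bbD={\rm diag}(d_1,\dots,d_p)$ independent of $\by=\bSi_n^{1/2}\bx$ (Assumption B), and set $\bbD=\bbP+\bD^0$ with $\bD^0=\bbD-\bbP$ a diagonal matrix of independent centred entries having moments $\rre (d_j^0)^k=\bbP^{(k)}_{jj}$. Then $\mathcal{Q}(\bz,\bA)=\by^T\bbD\bA\bbD\by$.

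\textbf{Mean.} Condition on $\bbD$. The conditional mean is $\tr(\bbD\bA\bbD\bSi_n)$. Taking the expectation over $\bbD$ gives $\tr(\bbP\bA\bbP\bSi_n)+\sum_j\bbP^{(2)}_{jj}A_{jj}\Sigma_{jj}$, which is $\tr\bA\bT_n$ by the computation in the Remark.

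\textbf{Covariance.} I would use the law of total covariance:
\[
\rCov(\mathcal{Q}_A,\mathcal{Q}_B)=\rre\,\rCov(\mathcal{Q}_A,\mathcal{Q}_B\mid\bbD)+\rCov\bigl(\tr(\bbD\bA\bbD\bSi_n),\tr(\bbD\bB\bbD\bSi_n)\bigr).
\]

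For the first term, put $\mb M_A=\bSi_n^{1/2}\bbD\bA\bbD\bSi_n^{1/2}$ and apply the standard identity for quadratic forms in i.i.d.\ entries:
\[
\rCov(\bx^T\mb M_A\bx,\bx^T\mb M_B\bx)=\tr(\mb M_A\mb M_B)+\tr(\mb M_A\mb M_B^T)+(\nu_4-3)\tr(\mb M_A\circ\mb M_B).
\]
I then average over $\bbD$. The Gaussian part becomes $\rre\tr(\bbD\bA\bbD\bSi_n\bbD\bB\bbD\bSi_n)$ plus its transposed analogue. The symmetry of $\bB$ lets the $\bB^T$ version merge into a factor of $2$, though $\bA$ may be nonsymmetric, which is why $\bA+\bA^T$ appears in the stated formula.

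The kurtosis part is $\rre\tr(\mb M_A\circ\mb M_B)$, written entrywise as $\sum_k (\bSi_n^{1/2}\bbD\bA\bbD\bSi_n^{1/2})_{kk}(\bSi_n^{1/2}\bbD\bB\bbD\bSi_n^{1/2})_{kk}$, which produces sums over indices $i,j,l,m$ of $\bSi_n^{1/2}$ entries times $\rre\, d_id_jd_ld_m$.

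For the second term I expand the products of four Bernoulli variables in the same way.

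\textbf{Key step.} Substitute $d=\mathfrak p+d^0$ in every product $\rre\, d_{i_1}d_{i_2}d_{i_3}d_{i_4}$ and classify the index coincidence patterns: all distinct; one pair equal; two pairs; a triple; all four equal. Since the $d^0_j$ are independent and centred, only patterns where every centred factor is matched survive. These yield the factors $\bbP^{(2)}$ (one coincidence), $\bbP^{(2)}\cdot\bbP^{(2)}$ (two pairs), $\bbP^{(3)}$ (a triple) and $\bbP^{(4)}$ (all four). The all-four term, after removing the part already counted in the two-pairs terms, gives $\bbP^{(4)}-3(\bbP^{(2)})^2=\bbP^{(5)}$.

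Each surviving pattern translates into a trace of Hadamard and ordinary products. An index coincidence turns a matrix product into a Hadamard product with a diagonal matrix, for example $\sum_j A_{jj}\bbP^{(2)}_{jj}(\cdot)_{jj}=\tr[(\bA\circ\bbP^{(2)})\cdots]$. The zeroth-order pattern reproduces $2\tr(\bT_n\bA\bT_n\bB)$ once it is combined with the pieces of $\bT_n=\bbP\bSi_n\bbP+\bbP^{(2)}\circ\bSi_n$.

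\textbf{Main obstacle.} The hard part is bookkeeping rather than any conceptual difficulty. One has to verify that the pure-$\bbP$ and single-$\bbP^{(2)}$ contributions from both covariance pieces recombine exactly into $\mathcal{I}_1$, and that the remaining Gram-part residues assemble into the eight terms of $\mathcal{I}_2$ with the stated coefficients $4,2,6,2,2,3,3,4$.

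I would organise this by the number of diagonal ``collapse'' events, and check each group separately:
\begin{itemize}
\item in the special case $\bbP=\bI$, where everything should reduce to the classical formula;
\item for $\bSi_n=\bI$, where direct computation is easy.
\end{itemize}
The kurtosis part $\mathcal{II}$ is handled analogously: the $(\nu_4-3)$ factor only multiplies $\rre\tr(\mb M_A\circ\mb M_B)$, so there the expansion involves one fewer layer of bookkeeping.
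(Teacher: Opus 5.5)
Your proposal is correct and is essentially the paper's proof: conditioning on $\bbD$ gives exactly the paper's split $(\nu_4-3)\mathcal{H}_1+2\mathcal{H}_2+\mathcal{H}_3$, namely your kurtosis part, your Gram part, and the covariance of the conditional means $\tr(\bbD\bA\bbD\bSi_n)$ and $\tr(\bbD\bB\bbD\bSi_n)$; the paper then expands $\bbD=\bbP+(\bbD-\bbP)$ and sorts index coincidences just as you describe. One correction to your bookkeeping guide: writing $\mathcal{H}_2=\sum a_{ij}\sigma_{jk}b_{kl}\sigma_{li}\,\rre d_id_jd_kd_l$, the term $\mathcal{I}_1=2\tr(\bT_n\bA\bT_n\bB)$ comes only from $2\mathcal{H}_2$ (its zeroth-order term, the two coincidences $j=k$ and $l=i$ that collapse a $\bSi_n$ onto its diagonal, and the double pair $\{i=l,\,j=k\}$), while all of $\mathcal{H}_3$ and the remaining terms of $2\mathcal{H}_2$ make up $\mathcal{I}_2$, including single-$\bbP^{(2)}$ terms such as $2\tr\bigl((\bbP^{(2)}\circ\bA)\bSi_n\bbP\bB\bbP\bSi_n\bigr)$.
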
	
	Furthermore, we proceed to present a lemma regarding the higher-order moments of $\mathcal{Q}(\bz,\cdot)$. This lemma also plays crucial role in our analysis, serving multiple purposes. Firstly, it enables us to derive some necessary high probability results. Secondly, it facilitates the control of negligible terms in our analysis.

\begin{lem}[High order moments of $\mathcal{Q}(\bz,\cdot)$]\label{lec1}
		Assume $\bz$ follows our model. Let ${\bf A}=(a_{jk})$ be a $p\times p$ nonrandom symmetric matrix and  $\rre|x_j|^{\ell}\le \nu_{\ell}$. Then for $\ell>2$,
		\begin{align*}
			&\rre\left|\mathcal{Q}(\bz,\bA)-\rtr\({\bf A}\bT_n\)\right|^{\ell}
			\le C_{\ell}\left[\left(n\|\bA\|^2\right)^{\ell/2}+\nu_{2\ell}n\|\bA\|^{\ell}\right],
		\end{align*}
		where  $C_{\ell}$ is a constant depending on $\ell$ only.
	\end{lem}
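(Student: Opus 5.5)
The plan is to condition on the missingness pattern and split the deviation into a part driven by $\bx$ and a part driven by $\bd$. We write $\bz=\bbD\bSi_n^{1/2}\bx$ with $\bbD={\rm diag}(d_1,\cdots,d_p)$. We read $\|\bA\|$ as the spectral norm, use $p\le Cn$ from Assumption C, and assume $|\bSi_n|_s$ is uniformly bounded. This last point is an extra hypothesis: Assumption D only bounds $|\bSi_n\mb\Theta_n|_s$, so we would need to check that boundedness of $\bSi_n$ itself is available or implicitly assumed. Set $\bB_{\bD}=\bSi_n^{1/2}\bbD\bA\bbD\bSi_n^{1/2}$, which is symmetric and depends only on $\bd$. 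Then
$$\mathcal{Q}(\bz,\bA)-\rtr(\bA\bT_n)=\big(\bx^T\bB_{\bD}\bx-\rtr\bB_{\bD}\big)+\big(\rtr(\bA\bbD\bSi_n\bbD)-\rtr(\bA\bT_n)\big),$$
and by Minkowski's inequality (or $|a+b|^\ell\le 2^{\ell-1}(|a|^\ell+|b|^\ell)$) it suffices to bound the $\ell$-th moment of each piece.

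For the first piece, condition on $\bd$; by Assumption B, $\bx$ is independent of $\bd$. The classical bound for quadratic forms in i.i.d.\ standardized entries (Bai--Silverstein, Lemma B.26) gives
$$\rre\big[|\bx^T\bB_{\bD}\bx-\rtr\bB_{\bD}|^\ell\,\big|\,\bd\big]\le C_\ell\big[(\nu_4\rtr\bB_{\bD}^2)^{\ell/2}+\nu_{2\ell}\rtr|\bB_{\bD}|^{\ell}\big].$$
Since $|\bbD|_s\le1$, we have $|\bB_{\bD}|_s\le|\bSi_n|_s\|\bA\|$, hence $\rtr\bB_{\bD}^2\le p|\bSi_n|_s^2\|\bA\|^2$ and $\rtr|\bB_{\bD}|^\ell\le p|\bSi_n|_s^\ell\|\bA\|^\ell$. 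This bound is uniform in $\bd$, so taking expectations over $\bd$ yields a contribution of order $(n\|\bA\|^2)^{\ell/2}+\nu_{2\ell}n\|\bA\|^\ell$. Here $\nu_4$ is absorbed into $C_\ell$, which is legitimate because sub-Gaussianity bounds $\nu_4$.

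For the second piece, write $d_j=\mathfrak p_j+e_j$, where the $e_j$ are independent, centered and bounded by $1$. Expanding $\rtr(\bA\bbD\bSi_n\bbD)=\sum_{j,k}a_{jk}\sigma_{kj}d_jd_k$ and subtracting its mean $\rtr(\bA\bT_n)$ (Lemma \ref{lec2}) leaves three terms:
\begin{itemize}
\item a linear term $2\sum_j e_j(\bA\bbP\bSi_n)_{jj}$;
\item a diagonal term $\sum_j a_{jj}\sigma_{jj}(e_j^2-\rre e_j^2)$;
\item an off-diagonal chaos $\sum_{j\ne k}a_{jk}\sigma_{kj}e_je_k$.
\end{itemize}
The first two are sums of independent bounded centered variables. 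Rosenthal's (or Hoeffding's) inequality bounds their $\ell$-th moments by $C_\ell(\sum_j|c_j|^2)^{\ell/2}\le C_\ell(p\|\bA\|^2|\bSi_n|_s^2)^{\ell/2}$, because each coefficient $c_j$ is bounded by $C\|\bA\||\bSi_n|_s$. For the chaos term, let $\bM=(\bA\circ\bSi_n)$ with its diagonal removed, and apply decoupling followed by Rosenthal conditionally (or the same Lemma B.26 with the bounded $e_j$ in place of $x_j$). This gives a bound $C_\ell[(\rtr\bM\bM^*)^{\ell/2}+p\|\bM\|^\ell]$. Using $\rtr\bM\bM^*\le\max_{jk}|\sigma_{kj}|^2\,\rtr\bA\bA^*\le p|\bSi_n|_s^2\|\bA\|^2$ and $\|\bM\|\le 2\|\bA\||\bSi_n|_s$ (Schur product bound with a positive semidefinite factor, plus diagonal removal), the whole second piece is $O((n\|\bA\|^2)^{\ell/2})$.

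The step most likely to cause trouble is this Bernoulli chaos. Its coefficient matrix is a Hadamard product, so the key is to control $\rtr\bM\bM^*$ and $\|\bM\|$ by $\|\bA\|$ and $|\bSi_n|_s$ alone, rather than by entrywise quantities that need not scale correctly. I would first try the Schur product inequality $\|\bA\circ\bSi_n\|\le\max_j\sigma_{jj}\|\bA\|$, valid since $\bSi_n\ge0$. Combining the two pieces then gives the claimed bound with $C_\ell$ depending only on $\ell$, the sub-Gaussian norm and the bounds on $|\bSi_n|_s$ and $y_n$.
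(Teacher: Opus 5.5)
Your proposal is correct and follows essentially the paper's route. You condition on $\bbD$ and apply Lemma B.26 to $\bx^T\bSi_n^{1/2}\bbD\bA\bbD\bSi_n^{1/2}\bx$, then treat $\rtr(\bA\bbD\bSi_n\bbD)-\rtr(\bA\bT_n)$ as a centered quadratic form plus a linear form in $\boldsymbol\xi=\bd-\bp$ with coefficient matrix $\bA\circ\bSi_n$, bounded by a quadratic-form inequality and Burkholder/Rosenthal; your additional care fills in steps the paper leaves tacit, namely splitting off the diagonal part (since $\boldsymbol\xi$ is not standardized, which B.26 formally requires), invoking $\|\bA\circ\bSi_n\|\le\max_j\sigma_{jj}\|\bA\|$ via Schur's inequality, and flagging the implicit boundedness of $|\bSi_n|_s$ and $p/n$.
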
	
The proofs of the aforementioned lemmas are deferred to the supplement material. 

\subsection{Limiting spectral distribution}  \label{Sec:setup}

{\ycomment Note that for a random matrix, both its ESD and the corresponding probability measure are inherently stochastic. To ensure clarity and comprehensiveness in our statement, we introduce the following definitions.
\begin{definition}[Convergence with high probability] 
We say that a sequence of random variables ${x_n}$ converges with high probability to $x$, denoted as $x_n \xrightarrow{\text{w.h.p.}} x$, if for any $\epsilon>0$ and $\ell>0$,  $${\rm P}\(|x_n-x|>\epsilon\)=o(n^{-\ell}), \quad n\to \infty.$$
\end{definition}

\begin{definition}[Convergence with high probability of ESD] Consider a sequence of random matrices ${\mathbf{A}_n}$ with empirical spectral distributions denoted by $F^{\mathbf{A}_n}$. Adopting the usual definition of Kolmogorov distance $\|F - G\|_{\infty} = \sup_x |F(x) - G(x)|$ for two distribution functions $F$ and $G$, we say that the sequence $F^{\mathbf{A}_n}$ converges with high probability to $F$, denoted as $F^{\mathbf{A}_n} \xrightarrow{\text{w.h.p.}} F$, if  $$\|F^{\mb A_n}-F\|_{\infty}\xrightarrow{\text{w.h.p.}} 0.$$
\end{definition}  }
The forthcoming theorem establishes the asymptotic distribution of $F^{\bS_n\mb\Theta_n}(x)$, providing insights into the behavior of the Gram matrix under missing at random mechanisms.
\begin{thm}[LSD]\label{thm1}
Under Assumptions A-D, the ESD $F^{\bS_n\mb\Theta_n}(x)$ converges to the limiting spectral distribution $F$ with high probability. The Stieltjes transform $m(z)$ associated with $F$ is analytic in $\mathbb{C}^+$. For each $z\in \mathbb{C}^+$, $m=m(z)$ represents a unique solution to the equation:
\begin{align}\label{eqs}
{m}=\int\frac{1}{t(1-y-yz m)-z}dH(t),
\end{align}
where uniqueness is guaranteed within the set $\{m\in\mathbb{C}^+:-\frac{1-y}{z}+ym\in\mathbb{C}^+\}$.
\end{thm}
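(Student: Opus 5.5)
The plan is the standard Stieltjes-transform route for separable sample covariance matrices, with Lemmas \ref{lec2} and \ref{lec1} replacing the usual quadratic-form identities. First I reduce $\bS_n\mb\Theta_n$ to a Hermitian form. The nonzero eigenvalues of $\bS_n\mb\Theta_n$ coincide with those of $\bss\bS_n\bss=\frac1n\sum_{j}\br_j\br_j^T$, where $\br_j=\bss\bz_j$. (This tacitly assumes $\mb\Theta_n$ is nonnegative definite; otherwise one works with the companion $n\times n$ matrix $\frac1n\bZ^T\mb\Theta_n\bZ$.) For nonrandom $\bA$, the first part of Lemma \ref{lec2} gives
\[
\rre\,\br_j^T\bA\br_j=\tr\(\bss\bA\bss\bT_n\),
\]
so each $\br_j$ has "covariance" $\mb\Psi_n$. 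Lemma \ref{lec1}, applied with $\bss\bA\bss$ in place of $\bA$, then gives concentration of $\frac1n\br_j^T\bA\br_j$ around $\frac1n\tr\bA\mb\Psi_n$ at rate $O(n^{-1/2})$ in every $L^\ell$. The sub-Gaussian spectral-norm bound (the "Lemma 2.3" cited in the remark, via an $\epsilon$-net) ensures that $|\bS_n|_s$ is bounded w.h.p.; I use it to truncate and to control resolvents near the real axis. Write $\bB_n=\bss\bS_n\bss$ and $\underline{\bB}_n$ for its $n\times n$ companion, and $m_n(z)=\frac1p\tr(\bB_n-z\bI)^{-1}$ and $\underline m_n(z)$ for the corresponding Stieltjes transforms.

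\emph{Step 1: concentration.} I show that $m_n(z)-\rre m_n(z)\to0$ w.h.p. for each fixed $z\in\mathbb{C}^+$. Write the difference as a martingale difference sum $\sum_j(\rre_j-\rre_{j-1})\tr\bigl[(\bB_n-z)^{-1}-(\bB_{nj}-z)^{-1}\bigr]$, where $\bB_{nj}$ removes the $j$-th column. Each summand is bounded by $2/(p\Im z)$, so Burkholder's inequality at order $2\ell$ gives
\[
\rre|m_n-\rre m_n|^{2\ell}=O(n^{-\ell}).
\]
Markov's inequality then yields tail probability $o(n^{-\ell'})$ for every $\ell'$. A countable dense set of $z$ together with equicontinuity upgrades pointwise convergence to convergence of the ESD, and the Kolmogorov-distance statement follows because the limit $F$ has a continuous distribution function (for $y\neq1$, away from the atom at $0$, which is handled by the rank count).

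\emph{Step 2: the approximate equation.} This is the core computation. Using $\bB_n-z\bI=\frac1n\sum_j\br_j\br_j^T-z\bI$, the Sherman–Morrison formula, and the identity
\[
-z\underline m_n=1-\frac1n\sum_j\frac{1}{1+\frac1n\br_j^T(\bB_{nj}-z)^{-1}\br_j},
\]
I introduce the deterministic equivalent $(-z\rre\underline m_n\mb\Psi_n-z\bI)^{-1}$. The standard decomposition of
\[
(\bB_n-z)^{-1}-(-z\rre\underline m_n\mb\Psi_n-z)^{-1}
\]
reduces everything to bounding $\frac1n\br_j^T\bC\br_j-\frac1n\tr\bC\mb\Psi_n$ for $\bC$ independent of $\br_j$. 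Lemma \ref{lec1} bounds this by $O(n^{-1/2})$, using that $\|\bss\bC\bss\|\le C|\bSi_n\mb\Theta_n|_s/\Im z$; here the uniform bound in Assumption D is essential. The rank-one differences are $O(1/n)$. Combining these gives
\[
\rre m_n=\int\frac{dH_n(t)}{-z\rre\underline m_n t-z}+o(1),
\qquad
\underline m_n=-\frac{1-y_n}{z}+y_n m_n .
\]
Together these are exactly \eqref{eqs} once $-z\underline m=1-y-yzm$.

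\emph{Step 3: limit and uniqueness.} Tightness follows from $\frac1p\tr\bB_n\le$ const w.h.p. Any subsequential limit of $\rre m_n$ solves \eqref{eqs} with $\underline m\in\mathbb{C}^+$. Uniqueness within the stated set is the classical Silverstein–Bai argument: subtract the equations for two solutions and use the Cauchy–Schwarz inequality to show that the resulting coefficient has modulus strictly less than one. Analyticity follows from Vitali's theorem.

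I expect the main obstacle to be Step 2. There the non-i.i.d. structure of $\br_j$ (Bernoulli masks multiplying correlated coordinates) must not leak into the limit. The key point to verify is that all of the extra $\mathcal{I}_2$ and $\mathcal{II}$ terms in Lemma \ref{lec2} are $O(n)$ variance contributions, and hence $o(n^2)$ after normalization, so that only the mean $\tr\bA\mb\Psi_n$ survives at first order.
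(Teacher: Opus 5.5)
Your proposal is correct and follows essentially the paper's proof. The paper's steps S1--S3 are the same: martingale/Burkholder concentration of $m_n$, a Sherman--Morrison comparison of the resolvent with a deterministic resolvent in $\mb\Psi_n$, identification of the coefficient with $-z\rre\underline m_n$ via $z\underline m_n=-\frac1n\sum_k\beta_k$, and a subsequence/uniqueness argument. Your two departures are cosmetic: bounded martingale differences in Step 1, and inserting $-z\rre\underline m_n$ directly instead of the paper's $b_n(z)$.

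Two slips need fixing. First, the identity should read $-z\underline m_n=\frac1n\sum_j\bigl(1+\frac1n\br_j^T(\bB_{nj}-z\bI)^{-1}\br_j\bigr)^{-1}$; your right-hand side $1-\frac1n\sum_j(\cdot)$ equals $y_n+zy_nm_n$. Second, $|\bSi_n\mb\Theta_n|_s$ does not control $\|\bss\bC\bss\|$, nor the masked matrices $\bSi_n^{1/2}\bbD\bss\bC\bss\bbD\bSi_n^{1/2}$ that Lemma \ref{lec1} actually involves, so the inequality you state is false in general. You should instead assume, as the paper tacitly does, that $\|\bSi_n\|$ and $\|\mb\Theta_n\|$ are bounded separately.
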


As observed, the limiting spectral distribution of the Gram matrix, under the presence of missing at random, still exhibits characteristics akin to the M-P law. However, a notable distinction arises in the distribution parameter $H$, which transitions from the complete population to the incomplete population due to the influence of missing at random. This phenomenon bears resemblance to the behavior observed in sample correlation matrices, see \cite{ElKaroui09C}.

{\ycomment
\begin{rem}
	We would like to discuss some points. Firstly, we would like to emphasize that our Gram matrix model differs from the one in \cite{JurczakR17S} in terms of normalization parameters, using $n$ instead of $N_{jk}=1 \vee \# \left\{t \in\{1, \ldots, n\}: d_{j t} d_{k t}=1\right\}$ for the entries of the Gram matrix $\left(\bD_n\circ\bSi_n^{1/2}\bX_n\right)\left(\bD_n\circ\bSi_n^{1/2}\bX_n\right)^T$. This difference in normalization brings about technical challenges in studying the spectral properties of the Gram matrix, especially under non-diagonal covariance matrix scenarios, as shown in \cite{JurczakR17S}. The dependence introduced by $N_{jk}$ breaks the independent structure of the columns in $\left(\bD_n\circ\bSi_n^{1/2}\bX_n\right)$, creating challenges in applying standard techniques for deriving the second-order spectral properties of the Gram matrix. In contrast, our studied Gram model maintains both the independent structure between columns of the data matrix and the conventional covariance matrix form. This characteristic not only facilitates a more straightforward investigation of the second-order limit but also enhances the overall clarity of the theoretical procedure. Secondly, it is evident that setting the auxiliary matrix $\mb \Theta_n$ to the identity yields a distinct LSD from the one in \cite{JurczakR17S}. More specifically, under the null case where $\mb \Sigma_n^{1/2}$ is the identity and assuming equal missing probability, our Gram matrix model remains non-negative definite, whereas the matrix model in \cite{JurczakR17S} could result in negative eigenvalues for a small enough missing probability. Importantly, our model ensures the applicability of theoretical results to statistical applications, as shown in Section \ref{appli}. Finally, the establishment of convergence with high probability beyond almost surely meets the requirement in the proof of Theorem \ref{thmnoout}, which, in turn, fulfills the requirement in the proof of Theorem \ref{thm2}.
\end{rem}
}

Let $F^{y,H}$ represent the distribution $F$, and let $F^{y_n,H_{n}}$ be obtained by replacing $y$ and $H$ with $y_n$ and $H_n$, respectively. We denote $m_n^0(z)$ as $m_{F^{y_n,H_{n}}}(z)$, which satisfies the equation (\ref{eqs}). In other words, we have:
\begin{align}
\underline{m}_n^0(z) = \frac{1}{y_n\int\frac{t}{t\underline{m}_n^0(z)+1}dH_n(t)-z}\label{i2}
\end{align}
and
\begin{align}\label{ll2}
z = -\frac{1}{\underline{m}_n^0} + y\int\frac{t}{t\underline{m}_n^0+1}d{\ycomment H_n}(t).
\end{align}
Here, $-z\underline{m}_n^0(z) = 1-y_n-y_nz m_n^0(z)$.
	
	\subsection{The separation of spectrum}
The subsequent theorem provides further understanding regarding the convergence of sample eigenvalues. We present a comprehensive theorem that demonstrates the absence of eigenvalues with high probability outside the limiting support. This result is also fundamental in establishing the central limit theorem for the LSS as it provides a high probability bound on the spectral norm of $\mb S_n$.

\begin{thm}[No outside eigenvalues]\label{thmnoout}
Consider the gram matrix $\mb S_n\mb\Theta_n$. Assume that the interval $[a, b]$ with $a>0$ lies outside the support of $F^{c, H}$ and $F^{c_n, H_n}$ for all large $n$. 
Then with high probability, no eigenvalue of $\mb S_n\mb\Theta_n$ appears in $[a, b]$ for all large $n$.
\end{thm}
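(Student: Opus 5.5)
We follow the Bai–Silverstein route for ``no eigenvalues outside the support'', adapted to the missing-at-random model and to high-probability statements. First we reduce the problem to a fixed bounded window. The remark after Assumption D says that Lemma~2.3 (the sub-Gaussian $\varepsilon$-net bound) gives a high-probability bound $|\bS_n|_s\le K$. Together with the uniform boundedness of $|\bSi_n\mb\Theta_n|_s$, this confines all eigenvalues of $\bS_n\mb\Theta_n$ to $[0,K']$ w.h.p. We may therefore assume $b\le K'$.

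We work with $\bss\bS_n\bss$, which has the same nonzero spectrum as $\bS_n\mb\Theta_n$. We also shrink $[a,b]$ slightly so that an enlarged interval $[a',b']\supset[a,b]$ still avoids the supports. Fix $z=x+iv_n$ with $x\in[a',b']$ and $v_n=n^{-\delta}$ for a small $\delta>0$. We will show
\[
\sup_{x\in[a',b']}\big|m_{F^{\bS_n\mb\Theta_n}}(z)-m_n^0(z)\big|=o(v_n)\quad\text{w.h.p.},
\]
and prove the same statement with the real and imaginary parts differentiated along a lattice of $n^{C}$ points in $x$. Lipschitz continuity in $x$, at cost $v_n^{-2}$ per unit step, extends the bound from the lattice to the whole interval. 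The union bound costs only a polynomial factor, which the $o(n^{-\ell})$ probability bounds absorb.

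To get the Stieltjes transform bound, we split $m_n-m_n^0$ into a random part $m_n-\rre m_n$ and a deterministic part $\rre m_n-m_n^0$. For the random part we use the martingale decomposition over the columns $\bz_j$, which are independent by Assumption B, together with Burkholder's inequality. Each martingale difference is controlled through quadratic forms $\bz_j^T\mb A\bz_j-\tr\mb A\bT_n$, where $\mb A$ is a resolvent of the leave-one-out matrix. Lemma~\ref{lec1} with large $\ell$ bounds these quadratic forms. Markov's inequality then gives $|m_n-\rre m_n|\le n^{-1+\epsilon}v_n^{-C}$ with probability $1-O(n^{-\ell})$.

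For the deterministic part we follow the standard perturbation of the equation \eqref{i2}. Lemma~\ref{lec2} supplies $\rre\mathcal Q(\bz,\mb A)=\tr\mb A\bT_n$, so $\rre m_n$ solves \eqref{eqs} with $H_n$ up to an error $O(n^{-1}v_n^{-C})$. Because $x$ lies outside the support at positive distance, the equation is stable there: the relevant Jacobian $1-y_n\int t^2\underline m^2/(1+t\underline m)^2dH_n$ is bounded away from zero. Hence $|\rre m_n-m_n^0|=O(n^{-1}v_n^{-C})$. Choosing $\delta$ small makes both errors $o(v_n)$.

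Finally we convert the Stieltjes transform estimate into an eigenvalue exclusion, following the Bai–Silverstein contradiction argument. Suppose an eigenvalue $\lambda\in[a,b]$. Taking differences of $\Im m_n$ at suitable points in the window, or using the combination $\sum_k v_n^{k}/|\lambda_j-x-iv_n|^{2}$ of several $v$'s as in Bai and Silverstein (1998), isolates $\lambda$'s contribution, which is at least of order $1/(n v_n)$. The corresponding limit quantity vanishes on $[a',b']$ because $F^{y_n,H_n}$ has no mass nearby. With only one scale $v_n$ this lower bound is not large enough, so we will iterate over several scales $v_n=n^{-\delta_k}$, or equivalently use the analytic-smoothing version: we bound $\sum_j f(\lambda_j)$ for a smooth bump $f$ supported in $[a',b']$ via the Helffer–Sjöstrand formula. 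This step is the main obstacle. It requires the error bound $o(1/n)$ for $\tr f(\bS_n\mb\Theta_n)$, i.e.\ the second-order precision of the resolvent near the real axis, obtained from the martingale and Lemma~\ref{lec2} expansions uniformly down to $v\sim n^{-1+\epsilon}$. We first try to obtain this from the moment bounds of Lemma~\ref{lec1} applied iteratively, using the high-probability spectral norm bound to keep the resolvents bounded. The count is an integer, so once it is $o(1)$ w.h.p. it must be zero, which proves the claim.
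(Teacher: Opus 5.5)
\textbf{The proof has a gap: your key estimate is too weak to detect single eigenvalues.} You aim for $\sup_{x}|m_n(x+iv_n)-m_n^0(x+iv_n)|=o(v_n)$ with $v_n=n^{-\delta}$. A single eigenvalue near $x$ changes $m_n(x+iv_n)$ by only about $1/(nv_n)$. Since $v_n\gg n^{-1/2}$, we have $1/(nv_n)\ll v_n$, so an $o(v_n)$ error cannot see it.

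Using several scales does not rescue this. Suppose the error is $\eta$ at the scales $\sqrt{k}\,v_n$, $k=1,\dots,q$. By partial fractions, the $q$-fold difference of imaginary parts bounds $\int d(F^{\bS_n\mb\Theta_n}-F^{y_n,H_n})(t)/\prod_{k=1}^q((x-t)^2+kv_n^2)$ only by $C_q\,\eta\,v_n^{1-2q}$. An eigenvalue at $x$ contributes about $1/(q!\,nv_n^{2q})$. So for every $q$ you need $\eta=o(1/(nv_n))$.

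Your fallback is a Helffer--Sj\"ostrand bound on $\rtr f(\bS_n\mb\Theta_n)$ with $o(1)$ error, which requires resolvent control down to $v\sim n^{-1+\epsilon}$. That is essentially an optimal local law, and you leave it unproved. The tools you name cannot supply it:
\begin{itemize}
\item Bounds carrying $v^{-C}$ losses are useless at $v=n^{-1+\epsilon}$.
\item The norm bound of Lemma~\ref{normlem} does not make $\bM^{-1}(x+iv)$ bounded for $x\in[a,b]$. That would presuppose there are no eigenvalues near $x$, which is the claim itself.
\end{itemize}

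\textbf{What the paper does.} Following \cite{BaiS98N}, it stays at the mesoscopic scale $v_n=n^{-\delta}$, with $\delta<1/68$ and $q=(2\delta)^{-1}\in\mathbb{N}$. It proves the sharper statement $\sup_{x\in[a,b]}nv_n|\underline m_n(x+iv_n)-\underline m_n^0(x+iv_n)|\to0$ w.h.p., and the same at $\sqrt{k}\,v_n$. This is done by rerunning the arguments for (4.1) and (5.1) of \cite{BaiS98N}, using the high-moment bounds of Lemma~\ref{lec1} (no truncation is needed under sub-Gaussianity) and the norm bound of Lemma~\ref{normlem}. Because $v_n^{2q}=1/n$, this $o(1/(nv_n))$ accuracy makes the localized $q$-fold integral $o(1)$. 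Any eigenvalue in $[a,b]$ would contribute at least a constant of order $1/q!$, which gives the contradiction.

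\textbf{Where your $o(v_n)$ bound fits.} It is useful as the input to this upgrade, not as the main estimate. It gives $\Im m_n(x+iv_n)=O(v_n)$. Combined with weak convergence of the ESD away from $[a,b]$, it shows that eigenvalues near $[a,b]$ contribute only $o(v_n)$ to $\Im m_n$. Hence $\frac1p\rtr\bM^{-1}(z)\bM^{-1}(\bar z)=\Im m_n(z)/v_n=O(1)$ and $\frac1p\rtr\bM^{-2}(z)\bM^{-2}(\bar z)=o(v_n^{-2})$. These are exactly the factors that must improve to push your fluctuation and bias bounds, which scale like $n^{-1}v_n^{-C}$ or $n^{-1+\epsilon}v_n^{-C}$, down to $o(1/(nv_n))$.

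Finally, the scales in the trick are constant multiples $\sqrt{k}\,v_n$ of a single $v_n$, not different powers $n^{-\delta_k}$.
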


To prove Theorem \ref{thmnoout}, we adopt the strategy employed by Bai and Silverstein \cite{BaiS98N}. Specifically, we begin by establishing a primary bound on the spectral norm of $\mb S_n$. Subsequently, we complete the proof of this theorem. The  proof of Theorem \ref{thmnoout} is postponed to Section \ref{tecproof}. We are in position to introduce a lemma that bounds the spectral norm of $\mb S_n$ by a constant-order quantity.

\begin{lem}\label{normlem}
Under the assumptions of Theorem \ref{thmnoout}, with high probability, we have
\begin{align*}
\|\mb S_n\|_s \leq C(1+\sqrt{y})^2.
\end{align*}
\end{lem}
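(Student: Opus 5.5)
We bound $\|\bS_n\|_s=\sup_{\bx\in S^{p-1}}\bx^T\bS_n\bx=\frac1n\|\bZ^T\|_s^2$ with an $\varepsilon$-net argument. This is the route anticipated in the remark after Assumption D, and it relies on the sub-Gaussianity in Assumption A. The structural observation is that, conditionally on $\bD_n$, the columns $\bz_j=\bbD_j\bSi_n^{1/2}\bx_j$ are independent. For any fixed unit vector $\mathbf u\in\mathbb{R}^p$, the scalar $\mathbf u^T\bz_j=(\bSi_n^{1/2}\bbD_j\mathbf u)^T\bx_j$ is a linear combination of independent sub-Gaussian entries. Hence it is sub-Gaussian with norm at most $C\|x\|_g|\bSi_n^{1/2}\bbD_j\mathbf u|\le C\|x\|_g\|\bSi_n\|_s^{1/2}$, uniformly in the realization of $\bbD_j$, because $\|\bbD_j\|_s\le1$. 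By Assumption B we may condition on $\bD_n$ throughout. We also use that $\|\bSi_n\|_s$ is bounded; we read this off the uniform boundedness in Assumption D, taking $\mb\Theta_n$ well conditioned, or simply assume it for $\bSi_n$ directly.

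\emph{Step 1 (fixed direction).} Fix unit vectors $\mathbf u\in\mathbb{R}^p$ and $\mathbf v\in\mathbb{R}^n$ and consider $\mathbf u^T\bZ\mathbf v=\sum_{j=1}^n v_j\,\mathbf u^T\bz_j$. Conditionally on $\bD_n$, this is a sum of independent centered sub-Gaussian variables. By Hoeffding's inequality it is sub-Gaussian with norm at most $K:=C\|x\|_g\|\bSi_n\|_s^{1/2}$, which gives
\[
{\rm P}\left(|\mathbf u^T\bZ\mathbf v|>t\,\big|\,\bD_n\right)\le 2\exp(-ct^2/K^2).
\]

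\emph{Step 2 (nets and union bound).} Take $1/4$-nets $\mathcal N_p$ and $\mathcal N_n$ of the unit spheres, of cardinalities at most $9^p$ and $9^n$. The standard approximation argument gives $\|\bZ\|_s\le 2\max_{\mathbf u\in\mathcal N_p,\mathbf v\in\mathcal N_n}|\mathbf u^T\bZ\mathbf v|$. A union bound then yields
\[
{\rm P}\left(\|\bZ\|_s>2t\,\big|\,\bD_n\right)\le 2\cdot 9^{p+n}\exp(-ct^2/K^2).
\]
Choose $t=C'K(\sqrt n+\sqrt p)$ with $C'$ large, so that $ct^2/K^2\ge 2(p+n)\log 9+(n+p)$. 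The right-hand side is then at most $2e^{-(n+p)}=o(n^{-\ell})$ for every $\ell$. Since the bound does not depend on $\bD_n$, it holds unconditionally. On the complement event,
\[
\|\bS_n\|_s=\frac1n\|\bZ\|_s^2\le 4C'^2K^2\left(1+\sqrt{p/n}\right)^2\le C(1+\sqrt y)^2,
\]
where the last inequality uses $y_n\to y$ from Assumption C. This is the claimed high-probability bound.

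\emph{Main obstacle.} The delicate point is Step 1: obtaining a sub-Gaussian tail for $\mathbf u^T\bz_j$ that is uniform over the random masks. A direct moment method fails here because $\bbD_j$ destroys the rotational and product structure. The conditioning on $\bD_n$ (permitted by the independence in Assumption B) handles this by reducing each projection to a deterministic linear form in $\bx_j$ whose coefficient vector $\bSi_n^{1/2}\bbD_j\mathbf u$ has norm at most $\|\bSi_n\|_s^{1/2}$. The second point is to make sure the boundedness of $\|\bSi_n\|_s$ is available, as noted above. Beyond these, the constant $C$ depends only on $\|x\|_g$ and $\sup_n\|\bSi_n\|_s$, and no finer spectral information is needed at this stage; Theorem \ref{thmnoout} will sharpen the bound later.
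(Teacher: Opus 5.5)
Your proposal is correct and is essentially the paper's argument. You condition on $\bD_n$ via Assumption B, observe that $\mathbf u^T\bZ\mathbf v=\sum_j v_j(\bSi_n^{1/2}\bbD_j\mathbf u)^T\bx_j$ is sub-Gaussian with squared norm at most $C\|\bSi_n\|_s$ uniformly over the masks, and then use $1/4$-nets of cardinality $9^p$ and $9^n$ with a union bound to get failure probability of order $e^{-n}$; your remark that boundedness of $\|\bSi_n\|_s$ itself (not only of $\|\bSi_n\mb\Theta_n\|_s$) is what is needed is apt, since the paper uses it tacitly.
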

To prove the above lemma, we employ the $\varepsilon$-net argument. This approach draws inspiration from the proof of Theorem 4.4.5 in \cite{vershynin2018high}, which considers a random vector with i.i.d. sub-Gaussian entries. However, our model deals with more general variables. To begin, we introduce some necessary definitions and lemmas, the proofs of which can be found in \cite{vershynin2018high}.

\begin{definition}
	[$\varepsilon$-net] Let $(T, d)$ be a metric space. Consider a subset $K \subset T$ and let $\varepsilon>0$. A subset $\mathcal{N} \subseteq K$ is called an {\bf $\varepsilon$-net} of $K$ if every point in $K$ is within distance $\varepsilon$ of some point of $\mathcal{N}$, i.e.
\begin{align*}
\forall x \in K, \quad \exists x_0 \in \mathcal{N}: d\left(x, x_0\right) \leq \varepsilon.
\end{align*}
\end{definition} 

\begin{definition}
	[Covering numbers] The smallest possible cardinality of an $\varepsilon$-net of $K$ is called the {\bf covering number} of $K$ and is denoted $\mathcal{N}(K, d, \varepsilon)$. Equivalently, $\mathcal{N}(K, d, \varepsilon)$ is the smallest number of closed balls with centers in $K$ and radii $\varepsilon$ whose union covers $K$.
\end{definition}
\begin{lem}
[Covering numbers of the Euclidean sphere] \label{net-n} The covering numbers of the unit Euclidean sphere {\ycomment $\mathbb{S}^{p-1}$} with Euclidean distance satisfy that for any $\varepsilon>0$ :
\begin{align*}
\left(\frac{1}{\varepsilon}\right)^p \leq \mathcal{N}\left(\mathbb{S}^{p-1}, \varepsilon\right) \leq\left(\frac{2}{\varepsilon}+1\right)^p.
\end{align*}	
\end{lem}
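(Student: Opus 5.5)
The plan is the standard volumetric (packing versus covering) argument, as in \cite{vershynin2018high}. Only the upper bound is used later, in the $\varepsilon$-net proof of Lemma \ref{normlem}. For the lower bound, note the following caveat. For small $\varepsilon$ the covering number of the sphere $\mathbb{S}^{p-1}$ grows only like $\varepsilon^{-(p-1)}$, so the lower bound $(1/\varepsilon)^p$ cannot hold for the sphere itself in that regime. The correct statement, and the one I would prove, is that $(1/\varepsilon)^p$ bounds the covering number of the unit ball $B^p$ from below. The upper bound holds for both the sphere and the ball.

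\emph{Upper bound.} I would take $\mathcal{N}\subset\mathbb{S}^{p-1}$ to be a maximal $\varepsilon$-separated subset, meaning $|x-x'|>\varepsilon$ for distinct $x,x'\in\mathcal{N}$. Such a set exists by Zorn's lemma, or by compactness, since any separated set is finite.
\begin{itemize}
\item By maximality, every $x\in\mathbb{S}^{p-1}$ lies within distance $\varepsilon$ of some point of $\mathcal{N}$. Hence $\mathcal{N}$ is an $\varepsilon$-net and $\mathcal{N}(\mathbb{S}^{p-1},\varepsilon)\le|\mathcal{N}|$.
\item The balls $x+\frac{\varepsilon}{2}B^p$, $x\in\mathcal{N}$, are pairwise disjoint, because their centres are more than $\varepsilon$ apart.
\item Each of these balls lies in $(1+\frac{\varepsilon}{2})B^p$, by the triangle inequality and $|x|=1$.
\end{itemize}
Comparing Lebesgue volumes, and using that volume scales as $r^p$, gives
\begin{align*}
|\mathcal{N}|\Big(\frac{\varepsilon}{2}\Big)^p\le\Big(1+\frac{\varepsilon}{2}\Big)^p ,
\end{align*}
that is, $|\mathcal{N}|\le(2/\varepsilon+1)^p$. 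The same argument works verbatim for $B^p$ in place of $\mathbb{S}^{p-1}$.

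\emph{Lower bound.} Suppose $N$ closed balls of radius $\varepsilon$ cover $B^p$. Subadditivity of volume gives
\begin{align*}
\mathrm{vol}(B^p)\le N\varepsilon^p\,\mathrm{vol}(B^p),
\end{align*}
hence $N\ge(1/\varepsilon)^p$. For $\varepsilon\ge1$ the claimed lower bound for the sphere is at most $1$, so it holds trivially there as well.

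There is no real obstacle in this argument. The only point needing care is the one flagged above, that the lower bound with exponent $p$ belongs to the ball. Since only the upper bound $(2/\varepsilon+1)^p$ enters the union bound in the proof of Lemma \ref{normlem}, this does not affect the rest of the paper.
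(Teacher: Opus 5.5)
Your proposal is correct and is the volumetric packing-versus-covering argument of \cite{vershynin2018high} (Proposition 4.2.12 and Corollary 4.2.13); the paper gives no proof of its own and simply cites that source. Your caveat is also right: the paper misquotes that corollary, which asserts both bounds for the unit ball but only the upper bound for the sphere (already for $p=1$ one has $\mathcal{N}(\mathbb{S}^{0},\varepsilon)\le 2<1/\varepsilon$ whenever $\varepsilon<1/2$), and only the upper bound, which gives $|\mathcal{P}|\le 9^p$ at $\varepsilon=1/4$, is used in the proof of Lemma \ref{normlem}.
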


\begin{lem}
[Quadratic form on a net] \label{net-norm} Let $\mb A$ be a $p \times n$ matrix and $\varepsilon \in[0,1 / 2)$.
For any $\varepsilon$-net $\mathcal{P}$ of the sphere $\mathbb{S}^{p-1}$ and any $\varepsilon$-net $\mathcal{N}$ of the sphere $\mathbb{S}^{n-1}$, we have
\begin{align*}
\sup _{\balpha \in \mathcal{P}, \bbeta \in \mathcal{N}} \balpha^T\mb A\bbeta \leq\|\mb A\| \leq \frac{1}{1-2 \varepsilon} \sup _{\balpha \in \mathcal{P}, \bbeta \in \mathcal{N}}\balpha^T\mb A\bbeta .
\end{align*}	
\end{lem}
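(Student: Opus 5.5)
The plan is the standard two-sided comparison: the lower bound follows from the variational characterization of the spectral norm, and the upper bound from approximating a maximizing pair of unit vectors by net points and absorbing the error. Throughout, $\mb A$ is a real $p\times n$ matrix and $\|\mb A\|$ is its spectral norm. We use the representation
\begin{align*}
\|\mb A\|=\sup_{\bx\in\mathbb{S}^{p-1},\,\by\in\mathbb{S}^{n-1}}\bx^T\mb A\by ,
\end{align*}
which holds because $\sup_{\bx}\bx^T\mb A\by=|\mb A\by|$ for each fixed $\by$.

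For the left inequality, note that $\mathcal{P}\subseteq\mathbb{S}^{p-1}$ and $\mathcal{N}\subseteq\mathbb{S}^{n-1}$. By Cauchy--Schwarz, every $\balpha\in\mathcal{P}$ and $\bbeta\in\mathcal{N}$ satisfy
\begin{align*}
\balpha^T\mb A\bbeta\le|\balpha|\,|\mb A\bbeta|\le\|\mb A\|.
\end{align*}
Taking the supremum over the nets gives the claim.

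For the right inequality, we first use compactness of $\mathbb{S}^{p-1}\times\mathbb{S}^{n-1}$ and continuity of the bilinear form. These give unit vectors $\bx,\by$ with $\bx^T\mb A\by=\|\mb A\|$. By the definition of an $\varepsilon$-net, we then choose $\balpha_0\in\mathcal{P}$ and $\bbeta_0\in\mathcal{N}$ with $|\bx-\balpha_0|\le\varepsilon$ and $|\by-\bbeta_0|\le\varepsilon$. The key step is the telescoping decomposition
\begin{align*}
\bx^T\mb A\by-\balpha_0^T\mb A\bbeta_0=(\bx-\balpha_0)^T\mb A\by+\balpha_0^T\mb A(\by-\bbeta_0).
\end{align*}
Each term is bounded by Cauchy--Schwarz and the operator norm, using $|\by|=|\balpha_0|=1$, so each is at most $\varepsilon\|\mb A\|$. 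Hence
\begin{align*}
\|\mb A\|\le\balpha_0^T\mb A\bbeta_0+2\varepsilon\|\mb A\|\le\sup_{\balpha\in\mathcal{P},\bbeta\in\mathcal{N}}\balpha^T\mb A\bbeta+2\varepsilon\|\mb A\|.
\end{align*}

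Since $\varepsilon<1/2$, the factor $1-2\varepsilon$ is positive. We move $2\varepsilon\|\mb A\|$ to the left-hand side and divide by $1-2\varepsilon$, which is legitimate because $\|\mb A\|<\infty$. This yields the stated bound.

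There is no real obstacle. The only points needing care are that the maximum is attained, so the supremum need not be handled with an extra $\delta$, and that the absorption step requires $\varepsilon<1/2$.
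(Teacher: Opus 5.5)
Your proof is correct. It is essentially the argument the paper relies on: the paper gives no proof of its own and defers to \cite{vershynin2018high}, where the intended argument is exactly yours. You approximate a norm-attaining pair $(\bx,\by)$ by net points via $\bx^T\mb A\by-\balpha_0^T\mb A\bbeta_0=(\bx-\balpha_0)^T\mb A\by+\balpha_0^T\mb A(\by-\bbeta_0)$, then absorb the resulting $2\varepsilon\|\mb A\|$ error using $\varepsilon<1/2$.
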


	\begin{proof}[Proof of Lemma \ref{normlem}]
		Our objective is to ensure control over $\balpha^T (\bD_n\circ\bSi_n^{1/2}\bX_n)\bbeta$ for all vectors $\balpha$ and $\bbeta$ belonging to the unit sphere, as defined by the {\ycomment operator norm}. To accomplish this, we adopt a two-step approach. First, we establish a rigorous control over $\balpha^T (\bD_n\circ\bSi_n^{1/2}\bX_n)\bbeta$ for fixed vectors $\balpha$ and $\bbeta$. This step allows us to obtain precise bounds on the expression. Second, we discretize the unit sphere by utilizing a carefully chosen net, which enables us to approximate the sphere with a finite set of representative points. By leveraging this discretization, we can then employ a union bound over all pairs of $\balpha$ and $\bbeta$ within the net. By taking into account the tight control established in the first step and the discretization scheme in the second step, we can confidently conclude the proof.

\subsubsection {Bound for Fixed vectors} Let $r_{ij}=\be_i^T\mb \Sigma_n^{1/2}\be_j$. For any fixed $\balpha \in \mathbb{S}^{p-1}$ and $\bbeta \in \mathbb{S}^{n-1}$, {\ycomment we may write the expression} $ \balpha^T (\bD_n\circ\bSi_n^{1/2}\bX_n)\bbeta$ as follows:
\begin{align*}
\balpha^T (\bD_n\circ\bSi_n^{1/2}\bX_n)\bbeta=\sum_{j=1}^n\sum_{k=1}^p \(\sum_{i=1}^p d_{ij} \alpha_i r_{ik}\)\beta_jx_{kj}\triangleq \sum_{j=1}^n\sum_{k=1}^p \varrho_{kj}\beta_jx_{kj},
\end{align*}
where $\varrho_{kj}=\sum_{i=1}^p d_{ij} \alpha_i r_{ik}$
is a weighted sum of independent  Bernoulli variables $d_{ij}$. 
To bound $\sum_{j=1}^n\sum_{k=1}^p \varrho_{kj}\beta_jx_{kj}$, we observe that the variables $x_{kj}$ are independent sub-Gaussian variables with sub-Gaussian norm $\|x\|_g$. By utilizing the concentration properties of sub-Gaussian random variables (Lemma \ref{sg-2}) and the properties of sub-Gaussian norms (Lemma \ref{sg-1}), for any $ \Upsilon \geq 0$ {\ycomment and for all choices of $d^{\odot}_{ij} \in \{0,1\}^{pn}$, we have}:
\begin{align*}
\mathrm{P}\{\balpha^T (\bD_n\circ\bSi_n^{1/2}\bX_n)\bbeta \geq \Upsilon|d_{ij}=d_{ij}^\odot,i=1,\cdots, p,j=1,\cdots,n\} \leq 2 \exp \left(-c \Upsilon^2 / C_{g}^2\right), 
\end{align*}

where \begin{align*}
	C_g^2\leq &C\sum_{j=1}^n\sum_{k=1}^p \varrho_{kj}^2\beta_j^2=C\sum_{j=1}^n\sum_{k=1}^p \sum_{i_1,i_2=1}^p d_{i_1j}^\odot d_{i_2j}^\odot \alpha_{i_1}\alpha_{i_2} r_{i_1k}r_{i_2k}\beta_j^2\\\notag
	\leq &C\balpha^T\mb\Sigma_n\balpha\leq C\|\mb \Sigma_n\|_s.
\end{align*}
Thus, we arrive at 
\begin{align*}
&\mathrm{P}\{\balpha^T (\bD_n\circ\bSi_n^{1/2}\bX_n)\bbeta \geq \Upsilon\}\\\notag=&\sum_{d_{ij}^\odot\in\{0,1\}^{pn}} \mathrm{P}\{\balpha^T (\bD_n\circ\bSi_n^{1/2}\bX_n)\bbeta \geq \Upsilon|d_{ij}=d_{ij}^\odot\}\mathrm{P}\{d_{ij}=d_{ij}^\odot\} \leq  \exp \left(-C \Upsilon^2 \right),
\end{align*}for some constant $C$.

\subsubsection{Union Bound:} Let $\varepsilon=1 / 4$. Utilizing Lemma \ref{net-n}, we can construct an $\varepsilon$-net denoted as $\mathcal{P}$ for the sphere $\mathbb{S}^{p-1}$ and an $\varepsilon$-net denoted as $\mathcal{N}$ for the sphere $\mathbb{S}^{n-1}$, where the cardinalities are bounded as follows:
\begin{align*}
|\mathcal{P}| \leq 9^p \quad \text { and } \quad|\mathcal{N}| \leq 9^n.
\end{align*}
By further employing Lemma \ref{net-norm}, we can establish an upper bound on the spectral norm of $\bD_n\circ\bSi_n^{1/2}\bX_n$ using these nets as follows:
\begin{align*}
\|\bD_n\circ\bSi_n^{1/2}\bX_n\|_s \leq 2 \max _{\balpha \in \mathcal{P}, \bbeta \in \mathcal{N}}\balpha^T\(\bD_n\circ\bSi_n^{1/2}\bX_n\)\bbeta .
\end{align*}
By utilizing the bound for fixed vectors, we obtain:
\begin{align*}
&\mathrm{P}\left\{\|\bD_n\circ\bSi_n^{1/2}\bX_n\|_s \geq 2C_1\(\sqrt{p}+\sqrt{n}\)\right\} \\\notag
\leq& \sum_{\balpha \in \mathcal{P}, \bbeta \in \mathcal{N}} \mathbb{P}\bigg\{\balpha^T\(\bD_n\circ\bSi_n^{1/2}\bX_n\)\bbeta \geq C_1\(\sqrt{p}+\sqrt{n}\)\bigg\}
\leq  9^{p+n}\exp \left(-CC_1^2 \(\sqrt{p}+\sqrt{n}\)^2 \right).
\end{align*}
If we choose the constant $C_1$ to be sufficiently large, we can conclude that:
\begin{align*}
\mathrm{P}\left\{\|\bD_n\circ\bSi_n^{1/2}\bX_n\|_s \geq 2C_1\(\sqrt{p}+\sqrt{n}\)\right\} \leq \exp \left(-n\right) .
\end{align*}
Finally, we can conclude that with high probability, the spectral norm of {\ycomment $\mb S_n$} is bounded as follows:
\begin{align*}
\|{\ycomment \mb S_n}\|_s \leq C(1+\sqrt{y})^2.
\end{align*}
Hence, the proof of this lemma is complete.

	\end{proof}

	\subsection{CLT for the LSS}
Let us consider the LSS of $\mb S_{n}\mb \Theta_n$ associated with a test function $f$. This statistic is given by the integral:

\begin{align*}
L_n(f, \mb {S}_{n}\mb \Theta_n) = \int f(x) dF^{\mb {S}_{n}\mb \Theta_n}(x).
\end{align*}
This kind of statistics arise in many statistical problems, especially in hypotheses tests considering the covariance structure of the {\ycomment population}.
In order to establish the second-order convergence of the LSS, a centralization step is performed to eliminate the leading order term. This is achieved by subtracting the integral of $f(x)$ with respect to $dF^{c_n, H_n}(x)$ from the integral of $f(x)$ with respect to $dF^{\mathbf{S}_{n}\mb \Theta_n}(x)$.
Consequently, the centralized LSS can be expressed as follows:
\begin{align*}
L^{c}(f, \mb{S}_{n}\mb \Theta_n) = \int f(x) d[F^{\mb{S}_{n}\mb \Theta_n}(x)-F^{c_n, H_n}(x)].
\end{align*}
Here, $F^{\mb{S}_{n}\mb \Theta_n}(x)-F^{c_n, H_n}(x)$ provides a measure of the deviation from the finite-dimensional approximation of the LSD. By examining the convergence properties and establishing the second-order convergence of this quantity, we can gain insights into the behavior of the LSS with respect to the underlying parameters and better understand its statistical properties. 

We now present the formal results that establish the limiting fluctuation of a collection of LSSs in detail. This convergence theorem allows us to make reliable inference about the underlying population parameters based on the observed LSSs.

	\begin{thm}[CLT for LSS]\label{thm2}
	 Let $f_1,\cdots,f_{\kappa}$ be functions on $\mathbb{R}$ analytic on an open interval containing
		$
		\left[\liminf_n\lambda_{\min}^{\bSi_n\mb \Theta_n}I_{(0,1)}(y)\left(1-\sqrt y\right)^2, 
			\limsup_n\lambda_{\max}^{\bSi_n\mb \Theta_n}\left(1+\sqrt y\right)^2\right].
$
			Under Assumptions A-D, we have
		that  the $\kappa$ dimensional random vector $\(L^{c}(f_j, \mb{S}_{n}\mb \Theta_n)\)_{j=1}^{\kappa}$
		is tight and converges weakly to a Gaussian vector $\(X_{f_j}\right)_{j=1}^{\kappa}$. {\ycomment The respective expectation is }
		\begin{align}\label{ee}
			\rre X_{f}=&-\frac1{2\pi i}\oint f(z)\left\{\frac{y\int\frac{\underline m^3(z)t^2}{\({\underline m}(z)t+1\)^3}dH(t)}{\(1-y\int\frac{\underline m^2(z)t^2}{\({\underline m}(z)t+1\)^2}dH(t)\)^2}+\frac{\underline m^3(z)a(z)}{1-y\int\frac{\underline m^2(z)t^2}{\({\underline m}(z)t+1\)^2}dH(t)}\right\}dz,
		\end{align}
		{\ycomment while the covariance is given by }
		\begin{align}\label{cc}
			\begin{split}
			{\rm Cov}\left( X_{f},X_g\right)=&-\frac1{2\pi^2}\oint\oint{f(z_1)g(z_2)}\(\frac{\frac {d\underline m(z_1)}{dz_1}\frac {d\underline m(z_2)}{dz_2}}{{\(\underline m(z_2)-\underline m(z_1)\)^2}}-\frac1{\(z_1-z_2\)^2}\) dz_1dz_2\\
			&-\frac1{4\pi^2}\oint
			\oint f(z_1)g(z_2)\frac{\partial^2\underline m(z_1)\underline m(z_2)d_2(z_1,z_2)}{\partial z_2\partial z_1}dz_1dz_2,
			\end{split}
		\end{align}
		where $f,g\in\left\{f_1,\cdots,f_{\kappa}\right\}$. The limiting parameters can be expressed as follows by denoting $\mathbb{T}_n^{(1)}(\mb\Theta_n,\bSi_n,\mathbb{P}_n,z) = \bss\left(\underline m(z)\mb\Psi_n + \mathbf{I}_p\right)^{-1}\bss$ and $\mathbb{T}_n^{(2)}(\mb\Theta_n,\bSi_n,\mathbb{P}_n,z) = \bss\left(\underline m(z)\mb\Psi_n + \mathbf{I}_p\right)^{-2}\bss$:
		\begin{align*}
		a(z)
		=&\lim\limits_{n\to\infty}{n^{-1}}\mathcal{I}_2\(\mathbb{T}_n^{(1)}(\mb\Theta_n,\bSi_n,\mathbb{P}_n,z),\mathbb{T}_n^{(2)}(\mb\Theta_n,\bSi_n,\mathbb{P}_n,z),\bSi_n,\mathbb{P}_n\)\\&+(\nu_4-3)\lim\limits_{n\to\infty}{n^{-1}}\mathcal{II}\(\mathbb{T}_n^{(1)}(\mb\Theta_n,\bSi_n,\mathbb{P}_n,z),\mathbb{T}_n^{(2)}(\mb\Theta_n,\bSi_n,\mathbb{P}_n,z),\bSi_n,\mathbb{P}_n\)
	.
	\end{align*}	
		\begin{align*}
		d_2(z_1,z_2)
		=&\lim\limits_{n\to\infty}{n^{-1}}\mathcal{I}_2\(\mathbb{T}_n^{(1)}(\mb\Theta_n,\bSi_n,\mathbb{P}_n,z_1),\mathbb{T}_n^{(1)}(\mb\Theta_n,\bSi_n,\mathbb{P}_n,z_2),\bSi_n,\mathbb{P}_n\)\\&+(\nu_4-3)\lim\limits_{n\to\infty}{n^{-1}}\mathcal{II}\(\mathbb{T}_n^{(1)}(\mb\Theta_n,\bSi_n,\mathbb{P}_n,z_1),\mathbb{T}_n^{(1)}(\mb\Theta_n,\bSi_n,\mathbb{P}_n,z_2),\bSi_n,\mathbb{P}_n\)
	.
	\end{align*}	
	 Moreover, both in equation (\ref{ee}) and (\ref{cc}), the contours are closed and taken in the positive direction in the complex plane. Each contour encloses the support of $F^{y, H}$, with the two contours in equation (\ref{cc}) assumed to be non-overlapping. 
	\end{thm}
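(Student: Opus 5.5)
We follow the Bai--Silverstein (2004) route for CLTs of linear spectral statistics, adapted to the columns $\bz_j=\bbD_j\bSi_n^{1/2}\bx_j$. These columns are i.i.d. but not of the form $\bT_n^{1/2}\times$(i.i.d. vector).

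\textbf{Step 1: truncation and localization.} By Lemma \ref{normlem} and Theorem \ref{thmnoout}, with high probability every eigenvalue of $\bS_n\mb\Theta_n$ lies in a fixed neighbourhood of the support of $F^{y_n,H_n}$. We may therefore replace $f$ by a smooth cutoff and use the Cauchy formula
\[
L^{c}(f,\bS_n\mb\Theta_n)=-\frac{1}{2\pi i}\oint f(z)\,p\big(m_{F^{\bS_n\mb\Theta_n}}(z)-m_n^0(z)\big)\,dz
\]
on a contour $\mathcal C$ enclosing the support. Sub-Gaussianity makes truncation of $x_{jk}$ at $\delta_n\sqrt n$ harmless.

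Define $M_n(z)=p[m_{F^{\bS_n\mb\Theta_n}}(z)-m_n^0(z)]$ on $\mathcal C$. Near the real axis ($|\Im z|\le n^{-1}\varepsilon_n$) we use the truncated version $\widehat M_n$. The high-probability norm bound controls this piece, exactly as in Bai--Silverstein. It then suffices to prove two things: tightness of $\widehat M_n$ in $C(\mathcal C)$, and finite-dimensional Gaussian convergence of the random part $M_n^1=p[m_n-\rre m_n]$ together with convergence of the deterministic part $M_n^2=p[\rre m_n-m_n^0]$.

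\textbf{Step 2: the random part.} Work with the companion matrix. Write the resolvent $\bD(z)=(\bss\bS_n\bss-z\bI)^{-1}$ and $\bD_j$ with $\bz_j$ removed. Decompose
\[
M_n^1=\sum_j(\rre_j-\rre_{j-1})\,\tr\bD
=-\sum_j(\rre_j-\rre_{j-1})\frac{d}{dz}\log\beta_j^{-1},
\]
where $\beta_j$ is built from the quadratic form $n^{-1}\bz_j^T\bss\bD_j\bss\bz_j$. The martingale CLT reduces the problem to evaluating
\[
\sum_j\rre_{j-1}\Big[\rre_j\big(\mathcal Q(\bz_j,\bA_j(z_1))-\tr\bA_j\bT_n\big)\,\rre_j\big(\mathcal Q(\bz_j,\bA_j(z_2))-\tr\bA_j\bT_n\big)\Big]/n^2,
\]
with $\bA_j=\bss\bD_j\bss$. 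The Lindeberg condition follows from Lemma \ref{lec1}.

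Lemma \ref{lec2} splits this covariance into three pieces: $\mathcal I_1$, $\mathcal I_2$, and $(\nu_4-3)\mathcal{II}$. The $\mathcal I_1=2\tr(\bT_n\bA\bT_n\bB)$ piece has exactly the Gaussian-type structure of the classical case with population $\bT_n$. Handling it as in Bai--Silverstein gives the first line of (\ref{cc}).

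The remaining pieces involve Hadamard products and diagonal entries of $\bA_j$. These cannot be expressed through $\underline m$ alone. We show that in $L^2$,
\[
\rre_j\bA_j(z)\approx -z^{-1}\,\mathbb T_n^{(1)}(z)/\underline m\ \text{-type deterministic equivalents},
\]
entrywise and in the relevant Hadamard functionals. Concretely, we prove $\max_k|\be_k^T(\bD_j-\rre\bD_j)\be_k|\to0$ w.h.p. and $\rre\bD_j\approx -(z\underline m\mb\Psi_n+z\bI)^{-1}$. Substituting these gives $n^{-1}\mathcal I_2+(\nu_4-3)n^{-1}\mathcal{II}\to d_2(z_1,z_2)$. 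The mixed $\rre_j\rre_{j-1}$ structure then produces the second line of (\ref{cc}) through the usual integration trick in $z_1,z_2$.

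\textbf{Step 3: the mean.} Expand $\rre m_n-m_n^0$ via $\bD^{-1}-(-z\rre\underline m_n\mb\Psi_n-z\bI)$. This gives
\[
M_n^2=\frac{y\int\underline m^3t^2(\underline mt+1)^{-3}dH}{\big(1-y\int\underline m^2t^2(\underline mt+1)^{-2}dH\big)^2}+\frac{\underline m^3 a(z)}{1-y\int\underline m^2t^2(\underline mt+1)^{-2}dH}+o(1).
\]
The second-order term $n^{-1}\sum_j\rre\,[\mathcal Q(\bz_j,\bA)-\tr\bA\bT_n][\mathcal Q(\bz_j,\bB)-\tr\bA\bT_n]$ again comes from Lemma \ref{lec2}, now with $\bA=\mathbb T^{(1)}$ and $\bB=\mathbb T^{(2)}$. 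Its $\mathcal I_1$ part yields the first summand and its $\mathcal I_2$ and $\mathcal{II}$ parts yield $a(z)$. Uniformity on $\mathcal C$ and analyticity then give (\ref{ee}) through the Cauchy integral.

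\textbf{Main obstacle.} The hardest step is the deterministic-equivalent approximation for the diagonal and Hadamard functionals of $\bD_j$ used in Steps 2--3. The Hadamard terms in $\mathcal I_2$ and $\mathcal{II}$ are not unitarily invariant, so the standard trace identities do not apply. My first attempt would be to bound $\rre|\be_k^T(\bD_j-\rre\bD_j)\be_l|^2=O(n^{-1})$ with a martingale decomposition plus Lemma \ref{lec1}. I would then use an iterated resolvent identity to show that $\rre\bD_j$ equals $-(z\underline m_n^0\mb\Psi_n+z\bI)^{-1}+O(n^{-1/2})$ in operator norm. The spectral norm bounds of Lemma \ref{normlem} and Assumption D keep all the error terms uniformly controlled.
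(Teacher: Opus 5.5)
Your overall architecture is the paper's:
\begin{itemize}
\item the Cauchy integral and the truncated process $\widehat M_n$ on $\mathcal C_n$;
\item the split $M_{n1}+M_{n2}$;
\item the martingale CLT with $\tilde\beta_k\approx b_n$;
\item Lemma \ref{lec2} to split the conditional covariance into $\mathcal I_1+\mathcal I_2+(\nu_4-3)\mathcal{II}$;
\item $\mathcal I_1$ handled as in Bai--Silverstein;
\item the mean obtained from the expansion of $p\rre m_n-\rtr\bG_n^{-1}$.
\end{itemize}

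You differ in how you justify replacing $\rre_j\bA_j(z)$ by a deterministic matrix inside the Hadamard functionals. Here $\rre_j\bA_j(z)$ is your $\bss\rre_j\bD_j\bss$, i.e.\ the paper's $\bss\rre_k\bM_k^{-1}\bss$. The paper's route is the resolvent expansion (\ref{cc2}), $\rre_k\bM_k^{-1}=\hat\bG_n^{-1}+\rre_k(\mathcal A_1+\mathcal A_2+\mathcal A_3)$. It bounds $\mathcal A_2,\mathcal A_3$ via (\ref{cc4})--(\ref{cc5}). It then removes the $\mathcal A_1$ contribution term by term, rewriting each Hadamard trace with (\ref{eqq}) and expanding by hand how the other factor depends on $\by_j$. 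You instead prove one isotropic estimate and substitute: $O(n^{-1/2})$ $L^2$-fluctuations, plus the operator-norm equivalent $\rre\bM_j^{-1}\approx-(z\underline m\mb\Psi_n+z\bI)^{-1}$. This handles every term of $\mathcal I_2$ and $\mathcal{II}$, in both the covariance and the mean, at once, at the price of a separate lemma. The paper's route avoids that lemma but is ad hoc and leaves ``the other terms'' to the reader.

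Three points need to be made explicit for your substitution to be valid.

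\textbf{Bilinear forms, not entries.} State the concentration for $\mb u^T(\bM_j^{-1}-\rre\bM_j^{-1})\mb v$ with arbitrary deterministic $|\mb u|,|\mb v|\le C$, not only for $\be_k^T(\cdot)\be_l$. The entries of $\bA_j$ are already such forms, with $\mb u=\bss\be_i$. A quantity like $(\bA\bbP\bSi)_{ii}=\sum_l a_{il}(\bbP\bSi)_{li}$ is controlled by entrywise $O(n^{-1/2})$ bounds only up to $O(1)$, because $(\bbP\bSi)_{\cdot i}$ is merely $\ell^2$-bounded. Your Burkholder argument gives the general case verbatim.

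\textbf{Why substitution works here but not for $\mathcal I_1$.} Every term of $\mathcal I_2$ and $\mathcal{II}$ is a sum of products of at most two such bilinear forms, with weights of total mass $O(p)$ (e.g.\ $\sigma_{ik}^2$, $r_{ik}^2$, $\sum_l r_{il}^2r_{lk}^2$). So the substitution error is $O(pn^{-1/2})=o(n)$. In $\rtr(\bT_n\bA\bT_n\bB)$, by contrast, there are $p^2$ unweighted products whose correlated fluctuations add up to order $p$. That is why $\mathcal I_1$ produces the $\int_0^1$-type kernel, while the Hadamard terms produce the $j$-independent $\underline m(z_1)\underline m(z_2)d_2(z_1,z_2)$.

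\textbf{Bookkeeping of $b_n$ factors.} Getting $\underline m(z_1)\underline m(z_2)d_2(z_1,z_2)$ requires the factors $\tilde\beta_j\approx b_n\approx -z\underline m$ that your covariance display drops. These combine with $\rre_j\bA_j\approx -z^{-1}\mathbb T_n^{(1)}(z)$; drop the stray $/\underline m$ you wrote there.

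Finally, you only assert tightness of $\widehat M_n$. It follows from the Bai--Silverstein increment bound, as in the paper, with Theorem \ref{thmnoout} supplying moment bounds on $\|\bM^{-1}(z)\|$ along $\mathcal C_n$.
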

	
\begin{rem}
A thorough comparison between our theorem and Theorem 1.1 in \cite{Bai2004a} reveals the noteworthy impact of missing at random on the CLT for LSSs. It becomes apparent that the inclusion of  additional terms in both the means and variance-covariance functions can introduce significant bias if one simply treats $\mathbf{T}_n$ as the population covariance matrix for incomplete data and applies the ordinary CLT without considering the influence of missing at random on the second-order properties under the non-null case. In contrast to the case of complete observations, where the CLT for LSS is primarily influenced by the eigenvalues of the population covariance matrix $\mathbf{\Sigma}$ under the assumption of a Gaussian distribution, the situation changes when dealing with missing observations. Even under the ideal assumption of Gaussianity, the CLT for LSS under the non-null case is strongly affected by the eigenvectors of $\mathbf{\Sigma}_n$ {\ycomment and $\Theta_n$, not just their eigenvalues.} 

{\ycomment To provide a more lucid explanation and isolate the impact of the auxiliary matrix $\mb \Theta_n$, we set $\mb \Theta_n=\mb I_p$ in the aforementioned CLT. In this case, we have:  \begin{align*}
		a(z)
		=&\lim\limits_{n\to\infty}{n^{-1}}\mathcal{I}_2\(\left(\underline m(z)\mb\bT_n + \mathbf{I}_p\right)^{-1},\left(\underline m(z)\mb\bT_n + \mathbf{I}_p\right)^{-2},\bSi_n,\mathbb{P}_n\)\\&+(\nu_4-3)\lim\limits_{n\to\infty}{n^{-1}}\mathcal{II}\(\left(\underline m(z)\mb\bT_n + \mathbf{I}_p\right)^{-1},\left(\underline m(z)\mb\bT_n + \mathbf{I}_p\right)^{-2},\bSi_n,\mathbb{P}_n\)
	.
	\end{align*}	 
Taking into account the definitions of $\mathcal{I}_2$ and $\mathcal{II}$ in Lemma \ref{lec2}, we observe that $a(z)$ is determined by numerous terms and affected not only by the eigenvalues of $\mathbf{\Sigma}_n$ but also by its eigenvectors. An illustrative example of such a term is $$\rtr\left[\bbP^{(2)}\(\left(\underline m(z)\mb\bT_n + \mathbf{I}_p\right)^{-1}\circ\left(\underline m(z)\mb\bT_n + \mathbf{I}_p\right)^{-2}\)\bbP^{(2)}\(\bSi_n\circ\bSi_n\)\right].$$ The discussion on $d_2(z_1,z_2)$ is similar.
For a more profound understanding, it is crucial to note that in the case of complete observations under Gaussian distribution, the influence on the LSSs from the left eigenmatrix of $\mathbf{\Sigma}_n^{1/2}$ is nullified by the matching eigenvalues of $\mb A \mb B$ and $\mb B\mb A$ for any square matrices $\mb A$ and $\mb B$. Similarly, the effect on the LSSs from the right eigenmatrix of $\mathbf{\Sigma}_n^{1/2}$ is nullified by the orthogonal invariance of the Gaussian distribution. However, the missing at random scenario significantly alters these dynamics by preventing the cancellation effect from the left eigenmatrix. This uniqueness characterizes the missing at random case as distinct and notable in its impact on the LSSs.}

This observation underscores the importance of accounting for missingness when characterizing the asymptotic behavior of LSSs and highlights the need for specialized methodologies in high dimensional statistics that appropriately address the effects of missing data. 
\end{rem}	

{ \ycomment
\begin{rem}
	To establish the consistency of our CLT with those derived for complete data in previous works, consider setting $\mb \Theta_n=\bI_p$. In the case of complete data, where $\mathfrak{p}_j=1$ for $1\leq j\leq p$, we have $\bbP=\bI_p$ and $\bbP^{(2)}=\bbP^{(3)}=\bbP^{(4)}=\bbP^{(5)}=\mb 0$. Consequently, this leads to the disappearance of the term $\mathcal{I}_2$ and the reduction of the term $\mathcal{II}$ in $a(z)$ to:$$\rtr\left[\(\bSi_n^{1/2}\left(\underline m(z)\mb\bSi_n + \mathbf{I}_p\right)^{-1}\bSi_n^{1/2}\)\circ\(\bSi_n^{1/2}\left(\underline m(z)\mb\bSi_n + \mathbf{I}_p\right)^{-2}\bSi_n^{1/2}\)\right],$$ as well as the simplification of the term $\mathcal{II}$ in $d_2(z_1,z_2)$ to: $$\rtr\left[\(\bSi_n^{1/2}\left(\underline m(z)\mb\bSi_n + \mathbf{I}_p\right)^{-1}\bSi_n^{1/2}\)\circ\(\bSi_n^{1/2}\left(\underline m(z)\mb\bSi_n + \mathbf{I}_p\right)^{-1}\bSi_n^{1/2}\)\right].$$
	This results in the degeneration of our CLT to the classical one under complete data. Refer to Theorem 1.1 in \cite{Bai2004a} for the case $\nu_4=3$ and Theorem 1.4 in \cite{PanZ08C} for a more general scenario.
\end{rem}
}



We present a corollary for the renormalized case where $\mb\Theta_n=\mb\bT_n^{-1}$, leading to $\mb \Psi_n=\bI_p$. It is demonstrated that in this scenario, the limiting parameters in the general CLT manifest a more accessible form and offer computational simplicity.  Furthermore, we apply this corollary to a test problem in the subsequent section.

	\begin{cor}[Renormalized case]\label{th3} 
		Under the assumptions of \ref{thm2} , assume {\ycomment additionally} that $\mb\Theta_n=\mb\bT_n^{-1}$ , then we have the {\ycomment mean} and (co)variance functions as
	\begin{equation*}\label{Meant1}
\aligned
\rre X_{f_{\ell}}
=&\lim\limits_{r\rightarrow1^{+}}\frac{1}{2\pi{i}}\oint\limits_{|\xi|=1}f_{\ell}(|1+\sqrt y\xi|^2)\left(\frac{\xi}{\xi^2-r^{-2}}-\frac{1}{\xi}\right)\,d\xi+\frac{a_{\mathbb{P},\mb\Sigma}}{2\pi{ i}}\oint\limits_{|\xi|=1}\frac{f_{\ell}(|1+\sqrt y\xi|^2)}{\xi^3}\,d\xi
\endaligned
\end{equation*}
and
\begin{equation*}\label{Covariancet1}
\aligned
\rCov(X_{f_{\ell_1}}, X_{f_{\ell_2}})
=&\lim\limits_{r\rightarrow1^{+}}\frac{-1}{2\pi^2}\oint\limits_{|\xi_1|=1}
  \oint\limits_{|\xi_2|=1}\frac{f_{\ell_1}(|1+\sqrt{y}\xi|^2)f_{\ell_2}(|1+\sqrt{y}\xi|^2)}{(\xi_1-r\xi_2)^2}\,d\xi_2d\xi_1\\
 &-\frac {a_{\mathbb{P},\mb\Sigma}}{4\pi^2 }\oint\limits_{|\xi_1|=1} \frac{f_{\ell_1}(|1+\sqrt{y}\xi_1|^2)}{{\xi_1^2}}d\xi_1
  \oint\limits_{|\xi_2|=1}\frac{f_{\ell_2}(|1+\sqrt{y}\xi_2|^2)}{{\xi_2^2}}d\xi_2,
\endaligned
\end{equation*} where $\ell, \ell_1, \ell_2\in\{1,...,\kappa\}$, the contour $\oint$ is anticlockwise and \begin{align*}
	&a_{\mathbb{P},\mb\Sigma}=&\lim\limits_{n\to\infty}{p^{-1}}\mathcal{I}_2\(\bT_n^{-1},\bT_n^{-1},\bSi_n,\mathbb{P}_n\)+(\nu_4-3)\lim\limits_{n\to\infty}{p^{-1}}\mathcal{II}\(\bT_n^{-1},\bT_n^{-1},\bSi_n,\mathbb{P}_n\).
\end{align*}
			
	\end{cor}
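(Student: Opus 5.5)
The plan is to specialize Theorem \ref{thm2} to $\mb\Theta_n=\bT_n^{-1}$, so that $\mb\Psi_n=\bI_p$ and $H_n=H=\delta_1$. Then \eqref{ll2} becomes the Marčenko--Pastur equation
\begin{align*}
z=-\frac1{\underline m}+\frac{y}{1+\underline m}.
\end{align*}
After that, I would evaluate \eqref{ee} and \eqref{cc} by the standard change of variables of \cite{Bai2004a}:
\begin{align*}
z=|1+\sqrt y\xi|^2=1+y+\sqrt y(\xi+\xi^{-1}),\qquad \underline m=-\frac{1}{1+\sqrt y\xi}.
\end{align*}
Here $\xi$ runs over $|\xi|=r>1$, and then we let $r\downarrow1$. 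This is the same substitution used by Bai and Silverstein for the null case $\bT=\bI$.

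\textbf{Classical terms.} The first term of \eqref{ee} and the first double integral of \eqref{cc} involve only $\underline m$ and $H$. They are identical to the complete-data Gaussian case with $\bT=\bI$. I would therefore quote the known computation, which gives
\begin{align*}
\lim_{r\to1^+}\frac1{2\pi i}\oint f\left(\frac{\xi}{\xi^2-r^{-2}}-\frac1\xi\right)d\xi
\end{align*}
for the mean and the $(\xi_1-r\xi_2)^{-2}$ kernel for the covariance.

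\textbf{New terms.} With $\mb\Psi_n=\bI$ we get
\begin{align*}
\mathbb T^{(1)}_n=(1+\underline m)^{-1}\bT_n^{-1},\qquad \mathbb T^{(2)}_n=(1+\underline m)^{-2}\bT_n^{-1}.
\end{align*}
The maps $\mathcal I_2$ and $\mathcal{II}$ are bilinear in $(\bA,\bB)$; this can be checked from Lemma \ref{lec2}, since every term is linear in each of $\bA$ and $\bB$. The scalar factors therefore pull out:
\begin{align*}
a(z)=\frac{y\,a_{\mathbb P,\bSi}}{(1+\underline m)^3},\qquad d_2(z_1,z_2)=\frac{y\,a_{\mathbb P,\bSi}}{(1+\underline m_1)(1+\underline m_2)}.
\end{align*}
The factor $y$ appears because the normalization changes from $n^{-1}$ to $p^{-1}$. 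The existence of the limit is assumed implicitly in the definition of $a_{\mathbb P,\bSi}$. Also, with $H=\delta_1$,
\begin{align*}
1-y\frac{\underline m^2}{(1+\underline m)^2}=\frac{d\underline m}{dz}\Big/\underline m^2 ,
\end{align*}
as follows from differentiating the MP equation.

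Substituting, the extra mean term becomes
\begin{align*}
-\frac1{2\pi i}\oint f(z)\,\frac{y\,a_{\mathbb P,\bSi}\,\underline m^5}{(1+\underline m)^3\,\underline m'}\,dz
=-\frac{a_{\mathbb P,\bSi}\,y}{2\pi i}\oint f\,\frac{\underline m^5}{(1+\underline m)^3}\,\frac{dz}{\underline m'} .
\end{align*}
In the $\xi$-variable, $1+\underline m=\sqrt y\,\xi/(1+\sqrt y\xi)$ and $dz=\sqrt y(1-\xi^{-2})d\xi$. Writing $dz/\underline m'=dz^2/d\underline m$ and collecting powers, the integrand should collapse to $f(|1+\sqrt y\xi|^2)\,\xi^{-3}$ up to sign, which yields the stated form.

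For the covariance, the integrand factorizes:
\begin{align*}
\frac{\partial^2}{\partial z_1\partial z_2}\left[\frac{y\,a\,\underline m_1\underline m_2}{(1+\underline m_1)(1+\underline m_2)}\right]=y\,a\,\partial_{z_1}g(z_1)\,\partial_{z_2}g(z_2),\qquad g=\frac{\underline m}{1+\underline m}.
\end{align*}
Since $g(\xi)=-1/(\sqrt y\,\xi)$, we have $dg=d\xi/(\sqrt y\,\xi^2)$. Hence each single integral becomes $y^{-1/2}\oint f\,\xi^{-2}d\xi$, the factors of $y$ cancel, and the result is exactly the displayed product term.

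\textbf{Main obstacle.} I expect the main difficulty to be the bookkeeping in the mean term: checking that the powers of $\sqrt y$ and the signs cancel exactly, and confirming that the contour orientation is preserved by the map $\xi\mapsto z$ for $|\xi|=r>1$. The pole analysis at $\xi=0$ and $\xi=\pm r^{-1}$ must also be verified, so that the $r\to1^+$ limit is needed only for the classical part and not for the $a_{\mathbb P,\bSi}$ terms. Before finalizing, I would check the whole reduction against the known complete-data case $\mathbb P=\bI$, where $a_{\mathbb P,\bSi}=\nu_4-3$ and the formulas must reduce to Bai--Silverstein/Pan--Zhou.
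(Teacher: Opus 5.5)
Your route (specialize Theorem \ref{thm2} to $\mb\Psi_n=\bI_p$, $H=\delta_1$, then use the Bai--Silverstein substitution) is the intended one; the paper gives no separate proof of the corollary. Several steps are right:
\begin{itemize}
\item Bilinearity of $\mathcal I_2$ and $\mathcal{II}$ does give $a(z)=y\,a_{\mathbb P,\bSi}(1+\underline m)^{-3}$ and $d_2(z_1,z_2)=y\,a_{\mathbb P,\bSi}(1+\underline m_1)^{-1}(1+\underline m_2)^{-1}$.
\item Your covariance computation via $\underline m/(1+\underline m)=-1/(\sqrt y\xi)$ is correct as written.
\end{itemize}

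The mean term, however, contains an error, and the step fails as stated. Differentiating $z=-1/\underline m+y/(1+\underline m)$ gives $dz/d\underline m=\underline m^{-2}-y(1+\underline m)^{-2}$, so
\[
1-y\frac{\underline m^2}{(1+\underline m)^2}=\frac{\underline m^2}{\underline m'(z)},
\]
which is the reciprocal of what you wrote. Your integrand $\underline m^5(1+\underline m)^{-3}(\underline m')^{-1}\,dz$ is therefore wrong. The move ``$dz/\underline m'=dz^2/d\underline m$'' is a symptom of this: it is a quadratic differential, not something you can integrate along a contour. If you push your expression through the substitution, you get $-(1-\xi^{-2})^2\,y^{-1}\xi^{-3}\,d\xi$. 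This does not collapse to $\xi^{-3}$. It also picks up the $\xi^4$ and $\xi^6$ Laurent coefficients of $f(1+y+\sqrt y(\xi+\xi^{-1}))$, so it gives the wrong answer already for $f(x)=x^4$.

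With the correct identity, the extra mean term is
\[
-\frac{y\,a_{\mathbb P,\bSi}}{2\pi i}\oint f(z)\frac{\underline m\,\underline m'}{(1+\underline m)^3}\,dz=-\frac{y\,a_{\mathbb P,\bSi}}{2\pi i}\oint f\,\frac{\underline m\,d\underline m}{(1+\underline m)^3}.
\]
Now use $\underline m=-(1+\sqrt y\xi)^{-1}$, $1+\underline m=\sqrt y\xi(1+\sqrt y\xi)^{-1}$ and $d\underline m=\sqrt y(1+\sqrt y\xi)^{-2}d\xi$. These give exactly $\underline m\,d\underline m/(1+\underline m)^3=-d\xi/(y\xi^3)$, hence the stated $\frac{a_{\mathbb P,\bSi}}{2\pi i}\oint f(|1+\sqrt y\xi|^2)\xi^{-3}d\xi$ with no sign ambiguity.

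\begin{itemize}
\item \textbf{Orientation.} It is preserved: for $\xi=re^{i\theta}$ with $r>1$, one has $\Im z=\sqrt y(r-r^{-1})\sin\theta$.
\item \textbf{Branch.} $|\xi|>1$ is the correct branch, since $\underline m\to0$ as $\xi\to\infty$.
\item \textbf{Radius.} The only singularity of this integrand is at $\xi=0$, so you may take the radius equal to $1$ directly. The $r\to1^+$ limit is needed only for the classical term.
\item \textbf{Check.} In the complete case $\mathbb P=\bI$, $\bSi_n=\bI$, we have $a_{\mathbb P,\bSi}=\nu_4-3$. For $f(x)=x^2$ the corrected formula gives $y(\nu_4-2)$, which matches the direct computation $\E\tr\bS_n^2-p(1+y_n)=y_n(\nu_4-2)$.
\end{itemize}
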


	\section{Testing for covariance structure under missing observations}\label{appli}
	The problem of testing for covariance matrix equality plays a crucial role in various statistical applications, including multivariate analysis, portfolio optimization, and pattern recognition. 
	In high-dimensional settings, testing for covariance structure becomes particularly challenging due to the curse of dimensionality. Furthermore, when missing observations are present, the problem becomes even more complex. In this section, we discuss the application of our theoretical results for testing the equality of high-dimensional covariance matrices to a given matrix in the presence of missing at random.
\subsection{Description of the test problem}
We adopt the data model discussed in the previous section. Specifically, we draw an incomplete sample of $n$ observations from a centered $p$-dimensional population $\by = (y_1,\cdots,y_p)^T$. The missingness is governed by a missing probability matrix $\mathbb{P}$, which determines the probability of each component being observed. Similarly to \cite{lounici2014high}, where the estimation of the covariance matrix is considered, we assume that the missing probabilities are known. For $1\leq i\leq p$ and $1\leq j\leq n$, we define $d_{ij}=1$ if the $i$-th variate of the $j$-th sample is observed, and $d_{ij}=0$ otherwise. Let $\bbD_j=\text{diag}(d_{1j},\cdots,d_{pj})$. We compute the corresponding Gram matrix as follows:
$$\hat\bS_n = \frac{1}{n}\sum_{j=1}^n\bbD_j\by_j\by_j^T\bbD_j.$$
We consider the null and alternative hypotheses for testing the equality of high-dimensional covariance matrices, denoted by $\bSi=\E\by\by^T$:
\begin{align*}
H_0 : \boldsymbol{\Sigma} = \boldsymbol{\Sigma}_0 \quad \text{vs.} \quad
H_1 : \boldsymbol{\Sigma} \neq \boldsymbol{\Sigma}_0.
\end{align*}
Here, $\boldsymbol{\Sigma}_0$ represents the given covariance matrix against which we are testing the equality.
\subsection{Test procedure under missing observations}
In this subsection, we describe the test procedure for handling missing observations. Specifically, we define the matrix $\bT_0=\mathbb{P}\bSi_0\mathbb{P}+\mathbb{P}^{(2)}\circ \bSi_0$, where $\mathbb{P}$ represents the missing {\ycomment probability} matrix. We consider two test statistics, $$\mathcal{T}_F=\tr(\bT_0^{-1}\hat\bS_n-\bI_p)^2 ,\quad \mathcal{T}_L=\log\det\bT_0^{-1}\hat\bS_n.$$  We now present the following theorem regarding the null distribution of these two statistics.
\begin{thm}[Test statistics]\label{tht}
	Under $H_0$, we have \begin{eqnarray*}
&&\sigma^{-1}_L(\mathcal{T}_L-\mu_L)\rightarrow N(0, 1),~~p/n \in (0,1),\\
&&\sigma^{-1}_F(\mathcal{T}_F-\mu_F)\rightarrow N(0, 1),
\end{eqnarray*}
where
{\begin{eqnarray*}
&&\mu_L=n(y_n-1)\log(1-y_{n})-p+\frac{\log(1-y_n)}{2}-\frac{y_na_{\mathbb{P},\bSi}}{2},\\
&&\mu_F=py_{n}+y_n(a_{\mathbb{P},\mb\Sigma}+1),\\
&&\sigma_L^{2}=-2\log(1-y_n)+y_na_{\mathbb{P},\mb\Sigma},~~\sigma_F^{2}=4y_n^2+4y_n^3(a_{\mathbb{P},\mb\Sigma}+2).
\end{eqnarray*}}
\end{thm}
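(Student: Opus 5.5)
Under $H_0$ we have $\bT_n=\bT_0$, so both statistics are linear spectral statistics of $\hat\bS_n\bT_0^{-1}$, whose nonzero eigenvalues are those of $\bT_0^{-1}\hat\bS_n$. This is exactly the renormalized case of Corollary \ref{th3} with $\mb\Theta_n=\bT_n^{-1}$ and $\mb\Psi_n=\bI_p$, so $H_n=H=\delta_1$. Explicitly,
$$\mathcal{T}_F=p\int (x-1)^2\,dF^{\bS_n\mb\Theta_n}(x),\qquad \mathcal{T}_L=p\int \log x\,dF^{\bS_n\mb\Theta_n}(x).$$
Since $H=\delta_1$, the limit $F^{y_n,H_n}$ is the Marchenko--Pastur law with ratio $y_n$. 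We first check that $\|\bSi_0\bT_0^{-1}\|_s$ is bounded, as Assumption D requires; this is implicit in the test setup. For $f(x)=\log x$ we restrict to $y<1$, so that $f$ is analytic on a neighbourhood of $[(1-\sqrt y)^2,(1+\sqrt y)^2]$. Theorem \ref{thmnoout}, applied through Corollary \ref{th3}, then ensures that with high probability no eigenvalue falls near $0$ and the contour argument is valid. Corollary \ref{th3} yields joint Gaussian limits for $L^c(f,\cdot)$ after multiplying by $p$ (the usual normalization of the LSS), with explicit mean and covariance contour integrals.

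\textbf{Centering terms.} These are the standard MP integrals $p\int f\,dF^{y_n}$:
\begin{itemize}
\item For $f(x)=(x-1)^2$: $\int x\,dF^{y_n}=1$ and $\int x^2\,dF^{y_n}=1+y_n$, so the integral equals $y_n$ and $p\,y_n$ is the leading part of $\mu_F$.
\item For $f(x)=\log x$: $p\int\log x\,dF^{y_n}=-p-p(1-y_n)y_n^{-1}\log(1-y_n)$, i.e.\ $n(y_n-1)\log(1-y_n)-p$.
\end{itemize}

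\textbf{Contour integrals.} Next we evaluate the integrals in Corollary \ref{th3} by residues at $\xi=0$ and $\xi=\pm r^{-1}$ (then letting $r\to1$), using $|1+\sqrt y\xi|^2=(1+\sqrt y\xi)(1+\sqrt y\xi^{-1})$ on $|\xi|=1$.

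The first (classical, $\nu_4=3$) part of the mean and covariance reproduces the Bai--Silverstein values:
\begin{itemize}
\item mean $y$ and variance $4y^2+8y^3$ for $(x-1)^2$;
\item mean $\tfrac12\log(1-y)$ and variance $-2\log(1-y)$ for $\log x$.
\end{itemize}

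The new $a_{\mathbb{P},\bSi}$ terms require computing the coefficients
$$\frac1{2\pi i}\oint f(|1+\sqrt y\xi|^2)\,\xi^{-3}\,d\xi,\qquad \frac1{2\pi i}\oint f(|1+\sqrt y\xi|^2)\,\xi^{-2}\,d\xi,$$
which are the Laurent coefficients of $f(|1+\sqrt y\xi|^2)$ at $\xi^{2}$ and $\xi^{1}$.
\begin{itemize}
\item For $f(x)=(x-1)^2$: expand $\bigl(\sqrt y(\xi+\xi^{-1})+y\bigr)^2$. The $\xi^{2}$ coefficient is $y$, giving the mean correction $y\,a$. The $\xi^{1}$ coefficient is $2y^{3/2}$, and squaring it and multiplying by the $-1/(4\pi^2)$ prefactor (each integral is $2\pi i$ times a coefficient) gives the variance correction $4y^3a$.
\item For $f(x)=\log x$: $\log|1+\sqrt y\xi|^2=\log(1+\sqrt y\xi)+\log(1+\sqrt y\xi^{-1})$. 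The $\xi^{2}$ coefficient is $-y/2$, giving the mean correction $-ya/2$. The $\xi^{1}$ coefficient is $\sqrt y$, giving the variance correction $ya$.
\end{itemize}
These reproduce $\mu_L,\mu_F,\sigma_L^2,\sigma_F^2$ with $y$ replaced by $y_n$; the replacement is legitimate by continuity. Slutsky's lemma then gives the standardized normal limits.

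\textbf{Main obstacle.} The delicate step is making sure $a_{\mathbb{P},\bSi}$ is well defined, i.e.\ that $p^{-1}\mathcal{I}_2$ and $p^{-1}\mathcal{II}$ converge, or else working along subsequences and standardizing with finite-$n$ versions. Care is also needed with the normalization constants ($p$ versus $n$ in the definitions of $a$ and $a_{\mathbb{P},\bSi}$) and with the signs in the residue bookkeeping for the $\xi^{-2}$ covariance term. We expect these sign and normalization checks to be where errors are most likely.
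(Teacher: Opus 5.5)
Your proposal is correct and follows essentially the same route as the paper. Both apply the renormalized Corollary~\ref{th3} with $\mb\Psi_n=\bI_p$, evaluate the contour integrals by residues, and standardize. Your Laurent coefficients and the resulting limits all match the stated ones, writing $a=a_{\mathbb{P},\mb\Sigma}$: mean $y(1+a)$ and variance $4y^2+4y^3(a+2)$ for $\mathcal{T}_F$, and mean $\tfrac12\log(1-y)-\tfrac{ya}{2}$ and variance $-2\log(1-y)+ya$ for $\mathcal{T}_L$.

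The only difference is that you treat $f(x)=(x-1)^2$ directly. The paper instead derives the joint limit of $\tr(\bT_0^{-1}\hat\bS_n)^2$ and $\tr\bT_0^{-1}\hat\bS_n$ and combines them linearly, which is equivalent. One small correction: what licenses $\log x$ for $y<1$ is the analyticity hypothesis of the CLT itself, not Theorem~\ref{thmnoout}, which only treats intervals $[a,b]$ with $a>0$ and so does not exclude eigenvalues near $0$.
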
 

According to the above theorem, given test level $\alpha$, we reject $H_0$ based on the statistics $\mathcal{T}_{L}$, $\mathcal{T}_{F}$ on the following region:
\begin{eqnarray}
&&\{\mathbb{D}_1,\cdots,\mathbb{D}_n, \by_1,...,\by_n:~~\sigma_L^{-1}|\mathcal{T}_L-\mu_L|>z_{1-\alpha/2}\},\label{TL}\\
&&\{\mathbb{D}_1,\cdots,\mathbb{D}_n, \by_1,...,\by_n:~~\sigma_F^{-1}|\mathcal{T}_F-\mu_F|>z_{1-\alpha/2}\},\label{TF}
\end{eqnarray}
where $z_{1-\alpha/2}$ is the $1-\alpha/2$ quantile of the standard normal distribution.

\subsection{Numerical Study}

In this section, we present a simulation study to assess the performance of the proposed statistics for testing covariance structures under missing observations.

\subsubsection{Experimental Setup}

We consider populations with dimensions $p\in \{64, \\ 128, 256, 512\}$ and dimension-to-sample size ratios of $y_n\in \{0.25, 0.5, 0.8\}$, respectively. For each combination of $(p,y_n)$, we generate $n$ i.i.d. data from a $p$-dimensional distribution with zero mean and covariance matrix $\mathbf{\Sigma}_{\theta}=(s_{i,j,\theta})_{p\times p}$.

The covariance matrix elements are defined as $s_{i,j,\theta}=0.65^{|i-j|+1}+\theta$, where $\theta$ differentiates the null hypothesis ($\theta=0$) from the alternative hypothesis ($\theta\neq 0$).

To introduce missing observations, we generate a vector of missing probabilities $\mathbf{b}=(\mathfrak{p}_1,\cdots,\mathfrak{p}_p)^T$, with $\mathfrak{p}_1,\cdots,\mathfrak{p}_p \sim \text{Uniform}(0.5, 1)$. We consider two underlying distributions: Gaussian distribution, where $x_{ij}\sim N(0,1)$, and Gamma distribution, where $(x_{ij}+2)\sim \Gamma(4,0.5)$.

The test statistics are computed under the null hypothesis ($\theta=0$). Each simulation experiment is replicated 10,000 times to ensure robustness of the results.

\subsubsection{Simulation Results}

Table \ref{table1} and Table \ref{table2} present the simulation results for Gaussian distribution ($\nu_4=3$) and Gamma distribution ($\nu_4=4.5$), respectively. The tables display the empirical size and empirical power of the $\mathcal{T}_L$ and $\mathcal{T}_F$ statistics.

Overall, both $\mathcal{T}_L$ and $\mathcal{T}_F$ demonstrate favorable performance across all scenarios. They exhibit similar empirical sizes, indicating good control of the type I error rate. Regarding empirical power, $\mathcal{T}_F$ consistently outperforms $\mathcal{T}_L$ in all cases, providing higher power to detect departures from the null hypothesis.

These simulation results support the effectiveness of the proposed statistics for testing covariance structures under missing observations.

\begin{table}
  \caption{{Percentages for empirical sizes and powers of the tests $\mathcal{T}_L$ and $\mathcal{T}_F$ under Gaussian distribution.}}\label{table1}
  \begin{center}
  \renewcommand{\arraystretch}{1.25}\setlength{\tabcolsep}{8pt}    \doublerulesep 2pt
 		\begin{tabular}{cc|cc|cc|cc}
 			\hline
 			\hline
 			\multicolumn{2}{c}{}&\multicolumn{2}{c}{Empirical sizes}&\multicolumn{4}{c}{Empirical powers}\\
 			\hline
 			&     &$\theta=0$ &       &$\theta=0.01$&  &$\theta=0.03$&  \\
 			$p$&$p/n$& $\mathcal{T}_L$ & $\mathcal{T}_F$  & $\mathcal{T}_L$ & $\mathcal{T}_F$  & $\mathcal{T}_L$ & $\mathcal{T}_F$\\
            \hline
 			&0.8&5.08 &6.20 &6.09  &8.46 &11.41 &23.15 \\
 		64 &0.5&4.84 &6.16 &6.41  &9.28 &19.12 &32.33 \\
			&0.25&5.24 &5.85  &8.88 &11.03 &36.08 &49.74 \\
\hline
 			
 			&0.8&5.05 &5.83 &8.59  &12.17 &28.96 &62.13 \\
 		128 &0.5&4.85 &6.09 &11.75  &14.89 &53.37 &82.36 \\
 		    &0.25&5.32 &4.77 &20.05  &22.11 &83.96 &98.72 \\

 			\hline
 			
 			&0.8&5.03 &5.43 &16.75  &30.07 &72.72 &99.75 \\
 		256 &0.5&4.78 &5.23 &29.04  &44.48 &96.87 &100 \\
 			&0.25&5.25 &5.26 &54.23  &73.05 &99.98 &100 \\
 \hline
 			&0.8&4.99 &5.23 &46.39  &87.31 &100 &100 \\ 	
 		512 &0.5&4.89 &5.46 &75.20  &98.42 &100 &100 \\		
 			&0.25&5.26 &4.95 &98.02  &100 &100 &100 \\
 			
 			\hline
 			\hline
 		\end{tabular}
 \end{center}
 \end{table}
 
 \begin{table}
  \caption{{Percentages for empirical sizes and powers of the tests $\mathcal{T}_L$ and $\mathcal{T}_F$ under Gamma distribution.}}\label{table2}
  \begin{center}
  \renewcommand{\arraystretch}{1.25}\setlength{\tabcolsep}{8pt}    \doublerulesep 2pt
 		\begin{tabular}{cc|cc|cc|cc}
 			\hline
 			\hline
 			\multicolumn{2}{c}{}&\multicolumn{2}{c}{Empirical sizes}&\multicolumn{4}{c}{Empirical powers}\\
 			\hline
 			&     &$\theta=0$ &       &$\theta=0.01$&  &$\theta=0.03$&  \\
 			$p$&$p/n$& $\mathcal{T}_L$ & $\mathcal{T}_F$  & $\mathcal{T}_L$ & $\mathcal{T}_F$  & $\mathcal{T}_L$ & $\mathcal{T}_F$\\
            \hline
 			&0.8&4.57 &8.71 &5.18  &10.49 &9.64 &21.42 \\
 		64 &0.5&4.75 &8.47 &6.15  &11.25 &14.40 &28.61 \\
			&0.25&4.93 &8.64  &7.33 &13.01 &27.29 &45.35 \\
\hline
 			
 			&0.8&4.71 &6.75 &7.29  &12.16 &24.10 &52.20 \\
 		128 &0.5&4.84 &6.68 &9.56  &14.29 &41.22 &72.51 \\
 		    &0.25&5.16 &7.04 &15.35  &21.80 &71.54 &96.80 \\

 			\hline
 			
 			&0.8&4.94 &5.67 &13.78  &24.76 &62.51 &98.53 \\
 		256 &0.5&4.60 &5.94 &21.23  &36.57 &98.48 &99.98 \\
 			&0.25&5.15 &5.77 &40.71  &65.39 &99.65 &100 \\
 \hline
 			&0.8&5.13 &5.58 &36.26  &76.20 &100 &100 \\ 	
 		512 &0.5&5.04 &5.22 &61.48  &94.51 &100 &100 \\		
 			&0.25&5.36 &4.95 &90.74  &100 &100 &100 \\
 			
 			\hline
 			\hline
 		\end{tabular}
 \end{center}
 \end{table}

	\section{Proof of main theorems}\label{tecproof}
	
In this section, we provide a sketch of the proofs for the main theorems presented in this paper. To maintain the brevity of the main text, we defer the detailed and rigorous proofs to the supplement material. Here, we outline the key ideas and techniques employed in establishing these results.

	\subsection{The proof of Theorem \ref{thm1}}
	We proceed to present the proof of the convergence in high probability of the empirical spectral distribution (ESD). It is noteworthy that if our aim were to establish convergence in almost sure sense, the main theorem established in \cite{BaiZ08L} would suffice considering the results of Lemma \ref{lec2}. However, we recognize the need for an additional intermediate result and the requirement of certain primary procedures that will be utilized in the proofs of subsequent theorems. Consequently, we choose to document the main proof procedures in this section.
	
Notice that
	\begin{align*}\label{aa}
		{ m}_n\left(z\right)=m_{F^{\bS_n}}(z)=\frac{1}{p}{\rm tr}\left(\bss\mathbf S_n\bss-z\mathbf I_{p}\right)^{-1},
	\end{align*}
	where $z=u+\upsilon i\in\mathbb{C}^+$. Let $\bG_n=\bG_n(z)=b_n(z)\mb \Psi_n-z\bI_p	$, where 
	\begin{align*}
		& \ b_n(z)=\frac1{1 +\frac1n\rre\rtr\left(\bM^{-1}\mb \Psi_n\right)}.
	\end{align*}
We proceed to prove the following steps:

{\bf{S1}} For any given $z$, ${{ m}_n}\left(z\right)- {\E}{{m}_n}\left(z\right)=o_{w.h.p.}(1).$ 

{\bf{S2:}} For any given $z$, $\frac1p\E\rtr\bG_n^{-1}-\rre m_n(z)
		=o(1).$
		
{\bf{S3:}} For any given $z$, $
		b_n(z)+z\widehat{\underline m}_n(z)=o_{w.h.p.}(1).
$ Then the result for LSD follows.

	\subsubsection {Proof of \bf{S1:}}\label{se:1}
	
	In this section, we show that
	\begin{equation}\label{eq:6}{{ m}_n}\left(z\right) - {\rm E}{{m}_n}\left(z\right) \stackrel{w.h.p}{\longrightarrow}0.\end{equation}
	Let $\bbD_k$ represent a $p\times p$ diagonal matrix that consists of the entries in the $k$-th column of $\bD_n$.	 Denote $\by_k=\bSi_n^{1/2}\bx_k$, $\mathbf S_n^k = {\mathbf S}_n - \frac{1}{n}\bbD_k\by_k\by_k^T\bbD_k$, $\beta_k(z)=\frac1{1 +n^{-1}\by_k^T\bbD_k\bss\bM_k^{-1}\bss\bbD_k\by_k},$ with
	$\bM=\bM(z)=\bss\bS_n\bss-z\bI_p, \quad \bM_k=\bM_k(z)=\bss\bS_n^k\bss-z\bI_p.$
Also let  ${{\rm E}_k}\left( \cdot \right)$ denote the conditional expectation given $\left\{ {\bx _{1},{\bx _2}, \cdots ,{\bx_{k}},\bbD_{1},\cdots,\bbD_k} \right\}$. Using the formula 
	\begin{align}
		{\left({\mathbf A} + {\boldsymbol{\alpha}} {{\boldsymbol{\beta}}^T}\right)^{ - 1}} = {{\mathbf A}^{ - 1}} - \frac{{{{\mathbf A}^{ - 1}}{\boldsymbol{\alpha} } {{\boldsymbol{\beta}} ^ T }{{\mathbf A}^{ - 1}}}}{{1 + {{\boldsymbol{\beta}}^ T }{{\mathbf A}^{ - 1}}{\boldsymbol{\alpha} }}},\label{c4}
	\end{align}
	one finds
	\begin{align}\label{3ad}
		\begin{split}
		{{ m}_n}\left(z\right) - {\rm E}{{ m}_n}\left(z\right)
		=& \frac{1}{{p}}\sum\limits_{k = 1}^{n} \left(\rre_k - \rre_{k - 1}\right)\left[\rtr\(\bM^{ - 1}\)-\rtr\(\bM_k^{ - 1}\)\right]\\
			=&- \frac{1}{{pn}}\sum\limits_{k = 1}^{n} \left(\rre_k - \rre_{k - 1}\right){\beta_k(z){\by_k^T\bbD_k \bss{\bM_k^{ - 2}}\bss\bbD_k\by_k}}.
\end{split}
	\end{align}
	{\ycomment Note that for $k=1,\cdots,n$, the imaginary part of $z(1+n^{-1}\by_k^T\bbD_k\bss\bM_k^{-1}\bss\bbD_k\by_k)$ is not smaller than $v$,  where $v$ represents the imaginary part of $z$. Therefore, one can establish $|\beta_k(z)| \leq \frac{|z|}{v}$. Subsequently, by applying Lemma \ref{lemma:8}, Lemma \ref{lec1} and the $c_r$-inequality, it follows that
	\begin{align}\label{c6}
		{\rm E}{\left| {{{ m}_{n}}\left(z\right) - {\rm E}{{ m}_{n}}\left(z\right)} \right|^\ell} &\le \frac{{{C_{\ell}}{n^{\ell/2}}|z|^{\ell}}}{{{p^{\ell}}{n^{\ell}}v^{\ell}}}\E|{{\by_1^T\bbD_1 \bss{\bM_1^{ - 2}}\bss\bbD_1\by_1}}|^{\ell} \\\notag &\leq\frac{{{C_{\ell}}{n^{\ell/2}}}}{{{p^{\ell}}{}}} = O\left(n^{-\ell/2}\right).
	\end{align}
	By the Chebyshev inequality, we then conclude  ${{ m}_n}\left(z\right) - {\rm E}{{ m}_n}\left(z\right)\stackrel{w.h.p.}{\longrightarrow}0.$
	}
	
	\subsubsection{Proof of \bf{S2:}}\label{se2}
	
	Write
	\begin{align}
		\bG_n^{-1}-&\bM^{-1}
		=\frac1n\sum_{k=1}^n\bG_n^{-1}\bss\(\bbD_k\by_k\by_k^T\bbD_k-b_n(z)\bT_n\)\bss\bM^{-1}\notag\\
	=&\frac1n\sum_{k=1}^n\beta_k(z)\bG_n^{-1}\bss\bbD_k\by_k\by_k^T\bbD_k\bss\bM_k^{-1}-b_n(z)\bG_n^{-1}\mb \Psi_n\bM^{-1}.	\label{all1}
	\end{align}
	Taking the trace and then the expectation, and dividing by $p$, it follows that
	\begin{align}\label{c7}
		\begin{split}
			\frac1p\rre\rtr\bG_n^{-1}-\rre m_n(z)
			=&\frac1{pn}\sum_{k=1}^n\rre\left[\beta_k(z)\by_k^T\bbD_k\bss\bM_k^{-1}\bG_n^{-1}\bss\bbD_k\by_k\right]\\
			&-\frac{b_n(z)}p\rtr\left[\bG_n^{-1}\mb \Psi_n\rre\bM^{-1}\right].	
		\end{split}
	\end{align}
It is easy to see by Lemma \ref{lec1}
	\begin{align*}
		&\left|\frac1{pn}\sum_{k=1}^n\rre\left[\(\beta_k(z)-b_n(z)\)\by_k^T\bbD_k\bss\bM_k^{-1}\bG_n^{-1}\bss\bbD_k\by_k\right]\right|\\
		\le&\frac{1}{pn}\sum_{k=1}^n\rre^{1/2}\left|\by_k^T\bbD_k\bss\bM_k^{-1}\bG_n^{-1}\bss\bbD_k\by_k\right|^2\rre^{1/2}\left|\beta_k(z)-b_n(z)\right|^2
		\le\frac C{\sqrt n}\to0,
	\end{align*}
	where we use the estimate
	\begin{align}\label{all3}
		\begin{split}
			\rre\left|\beta_k(z)-b_n(z)\right|^2
			\le&\frac{C}{n^2}\rre\left|\by_k^T\bbD_k\bss\bM_k^{-1}\bss\bbD_k\by_k-\rtr\left(\bM_k^{-1}\mb \Psi_n\right)\right|^2\\
			&+\frac{C}{n^2}\rre\left|\rtr\left(\bM_k^{-1}\mb \Psi_n\right)-\rre\rtr\left(\bM^{-1}\mb \Psi_n\right)\right|^2
			\le\frac C{ n}.
		\end{split}
	\end{align}
	Therefore, combining the above inequality and  (\ref{c7}), one finds
	\begin{align*}
		&\frac1p\rre\rtr\bG_n^{-1}-\rre m_n(z)
		=\frac{b_n(z)}{p}\rtr\left[
		\(\rre\bM_k^{-1}-\rre\bM^{-1}\)\bG_n^{-1}\mb \Psi_n\right]+o(1)\\
		=&\frac{b_n(z)}{pn}\sum_{k=1}^n\rre\left[\frac{\frac{1}{n}\by_k^T\bbD_k \mb \Theta_n{\bM_k}^{ - 1}\bG_n^{-1}\bss\bT_n\bM_k^{-1}\bss\bbD_k\by_k}{{1 +\frac{1}{n} \by_k^T\bbD_k \bss{\bM_k^{ - 1}}\bss\bbD_k\by_k}}\right]+o(1)	
		=o(1).
	\end{align*}
	Consequently, we conclude that
$\frac1p\rre\rtr\bG_n^{-1}-\rre m_n(z)
		=o(1).
$
	
	\subsubsection{Complete the proof}
	Let 
$\underline\bS_n^{\mb \Theta}=\frac1n\left(\bD_n\circ\bSi_n^{1/2}\bX_n\right)^T\mb\Theta_n\left(\bD_n\circ\bSi_n^{1/2}\bX_n\right).
$
	Then we have
$
		F^{	\underline\bS_n^{\mb \Theta}}(x)=1-y_n+y_nF^{\bS_n\mb \Theta_n}(x)
$
	and
	\begin{align}\label{all2}
		\underline m_n(z)=-\frac{1-y_n}z+y_nm_n(z),\qquad z\in\mathbb{C}^+,
	\end{align}
	where $\underline m_n(z)=m_{F^{	\underline\bS_n^{\mb \Theta}}}(z)$. Let $\widehat m_n(z)=\frac1p\rtr\bG_n^{-1}(z)$ and $\widehat{\underline m} _n(z)=-\frac{1-y_n}z+y_nm_n^0(z)$. By Subsection \ref{se:1} and \ref{se2}, we conclude
	\begin{align}\label{all4}
		\max\{m_n(z)-\widehat{ m}_n(z),\ \underline m_n(z)-\widehat{\underline m}_n(z)
		\}\xrightarrow{w.h.p.}0.
	\end{align}
Recall 
	$
		\bM+z\bI_p=\frac 1n\sum_{k=1}^n\bss\bbD_k\by_k\by_k^T\bbD_k\bss.
$
	Multiplying the inverse of $\bM$ from the right on both sides yields
$
		\bI_p+z\bM^{-1}=\frac 1n\sum_{k=1}^n\beta_k(z)\bss\bbD_k\by_k\by_k^T\bbD_k\bss\bM_k^{-1}.
$
	Taking the trace on both sides and dividing by $n$, we find 
$
		y_n+zy_nm_n(z)=1-\frac 1n\sum_{k=1}^n\beta_k(z).
$
	Together with (\ref{all2}), one gets
	\begin{align}\label{ll1}
		z\underline m_n(z)=-\frac 1n\sum_{k=1}^n\beta_k(z).
	\end{align}
	Using (\ref{all3}) and (\ref{all4}), we deduce
	\begin{align*}
		b_n(z)=-z\rre\underline m_n(z)+o\(\frac1{\sqrt n}\)=-z\widehat{\underline m}_n(z)+o_{w.h.p.}(1).
	\end{align*}
	Hence, we have 
	\begin{align}\label{aa1}
		\widehat{ m}_n(z)=\int\frac1{-z\widehat{\underline m}_n(z)t-z}dH_n(t)+o_{w.h.p.}(1).
	\end{align}
	By $|\widehat m_n(z)|<\frac1v$, consider any convergent sequence $(\widehat m_n(z))$ whose limit is denoted by $m(z)$. From (\ref{aa1}), we get
$
		{ m}(z)=\int\frac1{-z{\underline m}(z)t-z}dH(t),
$
	where $\underline m(z)=-\frac{1-y}z+ym(z)$. 
	It can be shown that  $m(z)$ is the Stieltjes transform of a probability measure $F$ on $\mathbb{R}$, analytic in $\mathbb{C}^+$. For each  $z\in \mathbb{C}^+$, $m=m(z)$ is a solution to the equation
	\begin{align*}
		{ m}=\int\frac1{t(1-y-yz m)-z}dH(t),
	\end{align*}
	which is unique in the set $\left\{m\in\mathbb{C}^+:-\frac{1-y}z+ym\in\mathbb{C}^+\right\}$. The proof of this theorem is thus complete.
	
\subsection{Proof of Theorem \ref{thmnoout}}

The proof of the theorem at hand relies on analyzing the convergence of the resolvent. Our approach follows the method established by \cite{BaiS98N}, where the ordinary sample covariance matrix is considered. However, we aim to enhance their results from  an almost sure convergence to a more precise notion, namely high probability under our model. Given the similarities between our proof process and theirs, we avoid duplicating their work and instead provide an overview, highlighting the non-trivial differences and our strategy for addressing these issues. It is important to note that readers can obtain a comprehensive understanding of the entire process by referring to the original paper.

The main objective is to achieve a convergence rate for $m_n(z)$ when $z$ is close, but not too close, to an interval outside the limiting support $F^{y_n,H_{n}}$. This can be accomplished by assigning an appropriate rate to the imaginary part, denoted by $v_n=n^{-\delta}$, where $\delta\in(0,1/68)$ is sufficiently small{\ycomment ,} such that $q=(2\delta)^{-1}$ is a positive integer. Our goal is to prove the following expression:
\begin{align}\label{nooutmaintask}
\sup _{x \in[a, b]} n v_n\left|\underline m_n\left(x+i v_n\right)-\underline m_n^0\left(x+i v_n\right)\right| \stackrel{\text{w.h.p.}}{\longrightarrow} 0.
\end{align}
Once this is established, we can multiply $v_n$ by a constant $k$ and obtain the following estimate:
\begin{align*}
\sup _{x \in[a, b]}\left|\underline m_n\left(x+i \sqrt{k} v_n\right)-\underline m_n^0\left(x+i \sqrt{k} v_n\right)\right|=o\left(1 /\left(n v_n\right)\right) \quad \text{w.h.p.}.
\end{align*}
Next, {\ycomment by considering the imaginary part and taking differences, following the strategy in Section 6 of \cite{BaiS98N} for better comprehension, we find, with high probability, the following expression holds:}
\begin{align*}
\sup _{x \in[a, b]}\left|\int \frac{\mathrm{d}\left(F^{\mathbf{\mb S_n}}(t)-F^{y_n, H_n}(t)\right)}{\left((x-t)^2+v_n^2\right)\left((x-t)^2+2 v_n^2\right) \cdots\left((x-t)^2+q v_n^2\right)}\right|=o(1).
\end{align*}
Subsequently, we split up the integral and, with high probability, obtain the following expression:
\begin{align*}
\begin{aligned}
& \sup_{x \in[a, b]} \bigg| \int \frac{I\left(\left[a^{\prime}, b^{\prime}\right]^c\right) \mathrm{d}\left(F^{\mb S_n}(t)-F^{y_n, H_n}(t)\right)}{\left((x-t)^2+v_n^2\right)\left((x-t)^2+2 v_n^2\right) \cdots\left((x-t)^2+q v_n^2\right)}\\\
& +\sum_{t_j \in[a-\varepsilon, b+\varepsilon]} \frac{v_n^{2 q}}{\left((x-t_j)^2+v_n^2\right)\left((x-t_j)^2+2 v_n^2\right) \cdots\left((x-t_j)^2+q v_n^2\right)} \bigg|=o(1).
\end{aligned}
\end{align*}
It follows that if each term in a subsequence satisfying the aforementioned equation contains at least one eigenvalue within the interval $[a, b]$, the sum in the equation will be uniformly bounded away from zero. As a result, the integral itself must also remain uniformly bounded away from zero. However, with high probability, the integral tends to zero as the integrand is bounded and, with high probability, both $F^{\mathbf{B}_n}$ and $F^{y_n, H_n}$ converge weakly to the same limit, which exhibits no mass on the interval $[a-\varepsilon, b+\varepsilon]$. Hence, for sufficiently large values of $n$, with high probability, no eigenvalues of $\mathbf{B}_n$ will appear in the interval $[a, b]$. This conclusion establishes the proof of our theorem. 

To achieve our goal, 
it is worth mentioning that all the mathematical tools presented in Section 2 of \cite{BaiS98N} are applicable to our case. Since the underlying distribution is sub-Gaussian, there is no need to truncate the variables, and similar bounds for (3.1) and (3.2) can be achieved in our model by applying Lemma \ref{lec1}. Specifically, for any fixed real matrix $\mb A$, we have
$$\E\left|\by_1^T \bbD_1^T\mb A \bbD_1\by_1-\operatorname{tr} \mb A\right|^{q} \leq C_q\left(\operatorname{\tr} \mb A\mb A^T\right)^{q / 2},$$
where $C_q$ depends on $q$ and the moments of $x_{11}$. These high-order moment bounds for quadratic forms are useful for obtaining high probability results. To prove our main task \eqref{nooutmaintask}, we follow similar steps as shown in \cite{BaiS98N}. It is important to note that, to obtain high probability results, we only need to increase the order of moments whenever the moment bounds of quadratic forms are applied. Furthermore, by Lemma \ref{normlem}, we have obtained high probability bounds for the spectral norm. {\ycomment By carefully examining their proof of (4.1) and (5.1) in \cite{BaiS98N}, we find that all of their arguments are valid in our model. Moreover, we note that the almost sure result in (4.1) from their work can be improved to hold with high probability.}

	\subsection{The proof of Theorem \ref{thm2}}
	
	Note that
	\begin{align}\label{al:4}
		\int f(x)d{Q}(x)=-\frac1{2\pi i}\oint f(z)m_{Q}(z)dz,
	\end{align}
	where $Q$ is a cumulative distribution function (c.d.f.) and $f$ is analytic on an open set containing the support of $Q$. The complex integral on the right-hand side is over any positively oriented contour enclosing the support of $Q$ and on which $f$ is analytic. Hence, the proof of Theorem \ref{thm2} relies on establishing limiting results on
	\begin{align*}
		M_n(z)=p\left[m_n(z)-m_n^0(z)\right].
	\end{align*}

	Let $v_0$ be any positive number,
$
x_r\in
\(\limsup_n\lambda_{\max}^{\bSi_n\mb \Theta_n}\left(1+\sqrt y\right)^2,\infty\).
$
Let $x_l$ be any negative number if $\liminf_n\lambda_{\min}^{\bSi_n\mb \Theta_n}I_{(0,1)}(y)\left(1-\sqrt y\right)^2$ is zero. Otherwise choose
$
x_l\in\(0,\liminf_n\lambda_{\min}^{\bSi_n\mb \Theta_n}I_{(0,1)}(y)\left(1-\sqrt y\right)^2\).
$
Let
$
\mathcal{C}_u=\left\{x+iv_0:x\in[x_l,x_r]\right\}.
$
Define a contour
$
\mathcal{C}=\left\{x_l+iv:v\in[0,v_0]\right\}\cup\mathcal{C}_u\cup\left\{x_r+iv:v\in[0,v_0]\right\}.
$
{\ycomment To avoid dealing with  small $\Im z$, we truncate $M_n(z)$ on the contour $\mathcal{C}$ of the complex plane. We choose a sequence ${\varepsilon_n}$ decreasing to zero, satisfying, for some $\alpha \in (0,1)$,
$
\varepsilon_n\ge n^{-\alpha}.
$
Let
$
\mathcal{C}_l=\left\{x_l+iv:v\in[n^{-1}\varepsilon_n,v_0]\right\}$ and  $
\mathcal{C}_r=\left\{x_r+iv:v\in[n^{-1}\varepsilon_n,v_0]\right\}.
$
Then, define  $\mathcal{C}_n=\mathcal{C}_l\cup\mathcal{C}_u\cup\mathcal{C}_r$. For $z=x+iv$, we define the process $\widehat M_n(\cdot)$ as
\begin{align}\label{aa}
\widehat M_n(\cdot)=
\begin{cases}
M_n(z),&{\rm for} \ z\in\mathcal{C}_n,\\
M_n(x_l+in^{-1}\varepsilon_n),& {\rm for} \ x=x_l,v\in[0,n^{-1}\varepsilon_n],\\
M_n(x_r+in^{-1}\varepsilon_n),& {\rm for} \ x=x_r,v\in[0,n^{-1}\varepsilon_n].
\end{cases}
\end{align}
It is observed that on the subset  $\mathcal{C}_n$ of $\mathcal{C}$, $M_n(\cdot)$ agrees with $\widehat M_n(\cdot)$.
We are going to present the following CLT for the truncated process $\widehat M_n(\cdot)$.}
To be specific, we will establish the following lemma.
	\begin{lem}\label{prel}
		Under the conditions of Theorem \ref{thm2}, {\ycomment $\widehat M_n(z)$} forms a tight sequence and converges weakly to a two-dimensional Gaussian process  $M(\cdot)$ satisfying for $z\in \mathcal{C}\cup\overline{\mathcal{C}}$ with $\overline{\mathcal{C}}=\{\bar z:z\in\mathcal{C}\}$,
		\begin{align}\label{mean}
			\rre M(z)=&\frac{y\int\frac{\underline m^3(z)t^2}{\({\underline m}(z)t+1\)^3}dH(t)}{\(1-y\int\frac{\underline m^2(z)t^2}{\({\underline m}(z)t+1\)^2}dH(t)\)^2}+\frac{\underline m^3(z)a(z)}{1-y\int\frac{\underline m^2(z)t^2}{\({\underline m}(z)t+1\)^2}dH(t)},
	\end{align}
		and for $z_1,z_2\in \mathcal{C}\cup\overline{\mathcal{C}}$ ,
		\begin{align}\label{cov}
			{\rm Cov} \left( M(z_1),M(z_2)\right)=\frac{2\underline m'(z_1)\underline m'(z_2)}{\(\underline m(z_2)-\underline m(z_1)\)^2}-\frac2{\(z_1-z_2\)^2}+\frac{\partial^2}{\partial z_2\partial z_1}\underline m(z_1)\underline m(z_2)d_2(z_1,z_2).
	\end{align}		
	\end{lem}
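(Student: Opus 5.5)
We follow the Bai--Silverstein scheme and split $\widehat M_n(z)=M_n^{(1)}(z)+M_n^{(2)}(z)$, where
\[
M_n^{(1)}(z)=p\,[m_n(z)-\rre m_n(z)]
\]
is the random part and
\[
M_n^{(2)}(z)=p\,[\rre m_n(z)-m_n^0(z)]
\]
is the deterministic part, both restricted to $\mathcal{C}_n$. Theorem \ref{thmnoout} and Lemma \ref{normlem} let us work on the high-probability event where $\|\bS_n\|_s$ is bounded and no eigenvalue of $\bS_n\mb\Theta_n$ lies near $x_l$ or $x_r$. On this event $\|\bM^{-1}(z)\|$ and $\|\bM_k^{-1}(z)\|$ are uniformly bounded on $\mathcal{C}_n$, so the vertical segments contribute negligibly to the truncation, exactly as in \cite{Bai2004a}. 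Because all our bounds hold with high probability, they survive replacement by truncated resolvents, i.e. indicators of this event, at cost $o(n^{-\ell})$.

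\textbf{Step 1: finite-dimensional convergence of $M_n^{(1)}$.} Write $M_n^{(1)}(z)$ as the martingale difference sum
\[
-\sum_{k}(\rre_k-\rre_{k-1})\,\frac{d}{dz}\log\beta_k(z).
\]
Expanding $\beta_k$ around $b_n(z)$, the leading term is
\[
-\frac{d}{dz}\Big\{ b_n(z)\,(\rre_k-\rre_{k-1})\,\tfrac1n\big[\by_k^T\bbD_k\bss\bM_k^{-1}\bss\bbD_k\by_k-\rtr(\bM_k^{-1}\mb\Psi_n)\big]\Big\}.
\]
The remainders are controlled in $L^2$ by Lemma \ref{lec1}. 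The Lyapunov condition follows from Lemma \ref{lec1} with $\ell=4$. For the martingale CLT it remains to find the limit of
\[
\sum_k \rre_{k-1}\big[\rre_k Y_k(z_1)\,\rre_k Y_k(z_2)\big].
\]
Conditioning on $\by_k,\bbD_k$ reduces this to covariances of quadratic forms $\mathcal Q(\bz,\bA)$ with $\bA=\bss\rre_k\bM_k^{-1}(z_j)\bss$, and Lemma \ref{lec2} evaluates these exactly.

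\textbf{Step 2: splitting the covariance.} The $\mathcal{I}_1=2\rtr(\bT_n\bA\bT_n\bB)$ part is handled verbatim as in \cite{Bai2004a}. The only change is that $\bSi$ is replaced by $\mb\Psi_n$, and we use the identity $\rre_k\bM_k^{-1}\approx -z^{-1}(\underline m\mb\Psi_n+\bI)^{-1}$ plus a martingale decoupling to compute $\frac1n\sum_k\rtr(\rre_k\bM_k^{-1}(z_1)\mb\Psi_n\rre_k\bM_k^{-1}(z_2)\mb\Psi_n)$. This produces
\[
\frac{2\underline m'(z_1)\underline m'(z_2)}{(\underline m(z_2)-\underline m(z_1))^2}-\frac{2}{(z_1-z_2)^2}.
\]
The $\mathcal I_2$ and $(\nu_4-3)\mathcal{II}$ parts involve Hadamard products, so there is no trace-cyclicity to exploit. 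For these we show that $\rre_k\bM_k^{-1}(z_j)$ may be replaced by its deterministic equivalent $-z_j^{-1}\bss(\underline m(z_j)\mb\Psi_n+\bI)^{-1}\bss$. The replacement is entrywise (diagonal) accurate, with error $o(1)$ after normalization by $n$. The resulting terms are $\underline m(z_1)\underline m(z_2)\,d_2(z_1,z_2)$, up to the $z$-derivative coming from $\frac{d}{dz}$ in the martingale expression, which yields the $\partial^2/\partial z_1\partial z_2$ form.

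\textbf{Step 3: tightness and the mean.} Tightness of $M_n^{(1)}$ on $\mathcal{C}_n$ follows from the moment criterion $\rre|M_n^{(1)}(z_1)-M_n^{(1)}(z_2)|^2/|z_1-z_2|^2\le C$, proved exactly as in \cite{Bai2004a} with Lemma \ref{lec1} supplying the moment bounds. For $M_n^{(2)}$ we refine Step S2 to order $1/n$. We expand $p(\rre m_n-m_n^0)$ through the identity \eqref{ll1} and \eqref{i2}, keeping the second-order term in $\beta_k$. This yields
\[
p\,(\rre\underline m_n-\underline m_n^0)\Big(1-y\int\tfrac{\underline m^2t^2}{(\underline m t+1)^2}dH\Big)=\underline m^3\,\frac1n\,\rre\big[\text{variance of the quadratic form with } \bA=\mathbb T^{(1)},\ \bB=\mathbb T^{(2)}\big]+\cdots .
\]
Applying Lemma \ref{lec2} once more, the $\mathcal I_1$ part reproduces the classical first term of \eqref{mean}, and the $\mathcal I_2,\mathcal{II}$ parts give $\underline m^3a(z)$. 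Convergence is uniform on $\mathcal{C}_n$ because $\underline m$ stays bounded away from singularities there.

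\textbf{Main obstacle.} The hardest step is the Hadamard-product terms in Step 2 and in $a(z)$. Replacing random resolvents by deterministic equivalents inside expressions like $\rtr[\bbP^{(2)}(\bA\circ\bB)\bbP^{(2)}(\bSi_n\circ\bSi_n)]$ requires control of the individual diagonal entries $\be_i^T\bss\rre_k\bM_k^{-1}\bss\be_i$, not merely normalized traces. We would first try an isotropic-type estimate
\[
\max_i|\be_i^T(\bM_k^{-1}-\rre\bM_k^{-1})\be_i|=o_{w.h.p.}(1),
\]
obtained by a martingale expansion as in S1 combined with high moments from Lemma \ref{lec1} and a union bound over $i\le p$. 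We would then pair this with a deterministic-equivalent bound $\be_i^T(\rre\bM^{-1}-\bG_n^{-1})\be_i=O(n^{-1/2})$, obtained by running S2 with $\be_i\be_i^T$ inserted in place of the trace.
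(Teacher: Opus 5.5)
Your architecture matches the paper's. You have the split $M_{n1}+M_{n2}$, the martingale CLT with the fourth-moment condition from Lemma \ref{lec1}, and Lemma \ref{lec2} applied to $\rre_{k-1}[\rre_k\varepsilon_k(z_1)\,\rre_k\varepsilon_k(z_2)]$. You also have the Bai--Silverstein decoupling for the $\mathcal I_1$ part, the moment criterion for tightness, and a second-order expansion of $\beta_k$ around $b_n$ for the mean.

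The genuine difference is how you justify replacing $\rre_k\bM_k^{-1}(z_j)$ by its deterministic equivalent inside $\mathcal I_2$ and $\mathcal{II}$. The paper avoids entrywise control altogether. It uses $\rtr(\bA^T(\bB\circ\bC))=\rtr((\bA^T\circ\bB^T)\bC)$ to rewrite each Hadamard trace as $\frac1n\rtr[\bss\rre_k\bM_k^{-1}(z_1)\bss\,\bC]$ with $\|\bC\|$ bounded. It then inserts the expansion $\bM_k^{-1}=\hat\bG_n^{-1}+\mathcal A_1+\mathcal A_2+\mathcal A_3$ and disposes of $\mathcal A_2,\mathcal A_3$ by trace-level bounds. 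The $\mathcal A_1$ contribution is computed by hand, which is needed because $\bC$ contains $\rre_k\bM_k^{-1}(z_2)$ and is correlated with $\mathcal A_1$. That route uses nothing beyond quadratic-form moments, but it must be redone term by term.

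Your isotropic estimate is a stronger input that you prove once. Afterwards every term of $\mathcal I_2$ and $\mathcal{II}$, in $d_2$ and in $a(z)$ alike, follows from bilinearity and crude bounds such as $\sum_{i,j}|b_{ij}|\sigma_{ij}^2\le Cp$. It also shows transparently why these terms have no $k/n$-dependence. Products of two fluctuations are $O(n^{-1+\epsilon})$ entrywise, summed against weights of total mass $O(p)$, then divided by $n$.

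Two points need repair.

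(i) Diagonal entries $\be_i^T(\cdot)\be_i$ are not enough. The term $4\rtr[\bbP^{(2)}(\bA\circ\bB)\bbP^{(2)}(\bSi_n\circ\bSi_n)]=4\sum_{i,j}(\bbP^{(2)})_{ii}(\bbP^{(2)})_{jj}a_{ij}b_{ij}\sigma_{ij}^2$ involves all entries. Terms like $\rtr[\bbP^{(2)}\circ(\bA\bbP\bSi_n)\circ(\bB\bbP\bSi_n)]$ involve $\be_i^T\bA\,\bbP\bSi_n\be_i$. So you need $\max|\mathbf u^T(\rre_k\bM_k^{-1}-\hat\bG_n^{-1})\mathbf w|=o(1)$ uniformly in $k$, over polynomially many pairs of bounded deterministic vectors such as $\bss\be_i$ and $\bss\bbP\bSi_n\be_i$.
\begin{itemize}
\item Your Burkholder-plus-union-bound argument delivers this if you state it as an $L^\ell$ bound, which passes to $\rre_k$ via $\rre|\rre_kX|^\ell\le\rre|X|^\ell$.
\item The martingale differences there involve linear forms $\mathbf a^T\bbD_j\by_j$ with $|\mathbf a|=O(1)$. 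Lemma \ref{lec1} as stated is too crude for these, because its factor $n\|\bA\|^2$ does not see that the relevant matrix has rank one. Use the conditional sub-Gaussianity of $\mathbf a^T\bbD_j\by_j$ given $\bbD_j$ instead.
\end{itemize}

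(ii) More seriously, Step 3 as written evaluates the whole covariance, $\mathcal I_1$ included, at $\mathbb T^{(1)},\mathbb T^{(2)}$. Then $\underline m^3n^{-1}\mathcal I_1(\mathbb T^{(1)},\mathbb T^{(2)})\to 2y\int\underline m^3t^2(\underline mt+1)^{-3}dH(t)$. After division by $1-y\int\underline m^2t^2(\underline mt+1)^{-2}dH(t)$, this is not the first term of (\ref{mean}). Just as in your Step 2, the $\mathcal I_1$ part is an ordinary trace of a product of two resolvents, and its limit is not obtained by substituting deterministic equivalents. The fix is as follows.
\begin{itemize}
\item Keep $\frac{2b_n^2}{n}\rre\rtr(\bM^{-1}\mb\Psi_n\bM^{-1}\bG_n^{-1}\mb\Psi_n)$ with the random resolvents.
\item Combine it with the term $-\frac{b_n^2}{n}\rre\rtr(\bM^{-1}\mb\Psi_n\bM^{-1}\bG_n^{-1}\mb\Psi_n)$ coming from $\rre\rtr(\bM_k^{-1}\bG_n^{-1}\mb\Psi_n)-\rre\rtr(\bM^{-1}\bG_n^{-1}\mb\Psi_n)$, which is the paper's $\mathcal J_3$.
\item Evaluate the one surviving copy through the self-consistent relation $\frac1n\rre\rtr(\bM^{-1}\mb\Psi_n\bM^{-1}\bG_n^{-1}\mb\Psi_n)\approx\frac1n\rtr(\bG_n^{-1}\mb\Psi_n\bG_n^{-2}\mb\Psi_n)\bigl(1-\frac{b_n^2}{n}\rtr(\bG_n^{-1}\mb\Psi_n)^2\bigr)^{-1}$. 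This supplies the second power of the denominator.
\end{itemize}
Only the $\mathcal I_2$ and $\mathcal{II}$ parts may be evaluated directly at $\mathbb T^{(1)},\mathbb T^{(2)}$. That is why $\underline m^3a(z)$ carries a single power of the denominator.
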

	Establishing the above lemma will straightforwardly lead to the CLT for LSS. 
	{\ycomment
	Indeed, analogously to the arguments presented on pages 562-563 in \cite{Bai2004a}, one can establish that with probability 1, for $f\in{f_1,\cdots,f{\kappa}}$,
\begin{align*}
\begin{aligned}
\left|\oint f(z)\left(M_n(z)-\widehat{M}_n(z)\right) d z\right| \to 0.
\end{aligned}
\end{align*} Here, the complex integral is over $z\in \mathcal{C}\cup\overline{\mathcal{C}}$.  
	}
	
	To accomplish the proof of the above lemma, we rewrite the expression for $z\in\mathcal{C}_n$ as follows:
	\begin{align}\label{aaa}
		M_n(z)=p[m_n(z)-\rre m_n(z)]+p[\rre m_n(z)-m_n^0(z)]\triangleq M_{n1}(z)+M_{n2}(z).
	\end{align}
	Subsequently, we will focus on investigating the limiting properties of the resulting random processes $M_{n1}(z)$ and $M_{n2}(z)$. We will establish the veracity of the following three facts.
	\begin{description}
  \item[Fact 1:] The process $M_{n1}(z)$ {\ycomment for $z\in\mathcal{C}_n$} converges in distribution to a zero mean Gaussian process whose limiting covariance function agrees with (\ref{cov}) in Lemma \ref{prel}. The main strategy for establishing this fact involves utilizing martingale difference decomposition and applying the CLT for martingales.
  \item[Fact 2:] The deterministic process $M_{n2}(z)$ {\ycomment for $z\in\mathcal{C}_n$} converges uniformly to (\ref{mean}). 
  \item[Fact 3:] The process $M_{n1}(z)$ {\ycomment for $z\in\mathcal{C}_n$} is tight.
\end{description}
The detailed proof of above facts will be deferred to the supplementary material.

\subsection{Proof of Theorem \ref{tht}}
	By Corollary \ref{th3}, we have that under $H_0$, 
	\begin{align*}
		&\log\det\bT_0^{-1}\hat\bS_n-\(n(y_n-1)\log(1-y_{n})-p\)\overset{D}{\to}N(\varsigma_L,\vartheta_L),\\
		&\begin{pmatrix}
  \tr(\bT_0^{-1}\hat\bS_n)^2-(p+py_n) \\
\tr\bT_0^{-1}\hat\bS_n-p
\end{pmatrix}\overset{D}{\to}N\(\begin{pmatrix}
  \varsigma_2 \\
\varsigma_1
\end{pmatrix},\begin{pmatrix}
  \vartheta_2& \vartheta_{12}\\
\vartheta_{12}&\vartheta_2
\end{pmatrix}\),
	\end{align*}	
	where by the residual theorem
	\begin{align*}
		\varsigma_L=&\lim\limits_{r\rightarrow1^{+}}\frac{1}{2\pi{i}}\oint\limits_{|\xi|=1}\log(|1+\sqrt y\xi|^2)\left(\frac{\xi}{\xi^2-r^{-2}}-\frac{1}{\xi}\right)\,d\xi+\frac{a_{\mathbb{P},\mb\Sigma}}{2\pi{ i}}\oint\limits_{|\xi|=1}\frac{\log(|1+\sqrt y\xi|^2)}{\xi^3}\,d\xi \\
		=&\frac{\log(1-y)}{2}-\frac{ya_{\mathbb{P},\bSi}}{2},\\
		\varsigma_2=&\lim\limits_{r\rightarrow1^{+}}\frac{1}{2\pi{i}}\oint\limits_{|\xi|=1}|1+\sqrt y\xi|^4\left(\frac{\xi}{\xi^2-r^{-2}}-\frac{1}{\xi}\right)\,d\xi+\frac{a_{\mathbb{P},\mb\Sigma}}{2\pi{ i}}\oint\limits_{|\xi|=1}\frac{|1+\sqrt y\xi|^4}{\xi^3}\,d\xi \\
		=&y+a_{\mathbb{P},\mb\Sigma}y,\\
		\varsigma_1=&\lim\limits_{r\rightarrow1^{+}}\frac{1}{2\pi{i}}\oint\limits_{|\xi|=1}|1+\sqrt y\xi|^2\left(\frac{\xi}{\xi^2-r^{-2}}-\frac{1}{\xi}\right)\,d\xi+\frac{a_{\mathbb{P},\mb\Sigma}}{2\pi{ i}}\oint\limits_{|\xi|=1}\frac{|1+\sqrt y\xi|^2}{\xi^3}\,d\xi =0,\\
		\vartheta_L=&\lim\limits_{r\rightarrow1^{+}}\frac{-1}{2\pi^2}\oint\limits_{|\xi_1|=1}
  \oint\limits_{|\xi_2|=1}\frac{\log(|1+\sqrt{y}\xi_1|^2)\log(|1+\sqrt{y}\xi_2|^2)}{(\xi_1-r\xi_2)^2}\,d\xi_2d\xi_1\\
 &-\frac {a_{\mathbb{P},\mb\Sigma}}{4\pi^2 }\oint\limits_{|\xi_1|=1} \frac{\log(|1+\sqrt{y}\xi_1|^2)}{{\xi_1^2}}d\xi_1
  \oint\limits_{|\xi_2|=1}\frac{\log(|1+\sqrt{y}\xi_2|^2)}{{\xi_2^2}}d\xi_2\\
  =&-2\log(1-y)+ya_{\mathbb{P},\mb\Sigma},\\
  \vartheta_2=&\lim\limits_{r\rightarrow1^{+}}\frac{-1}{2\pi^2}\oint\limits_{|\xi_1|=1}
  \oint\limits_{|\xi_2|=1}\frac{|1+\sqrt{y}\xi_1|^4|1+\sqrt{y}\xi_2|^4}{(\xi_1-r\xi_2)^2}\,d\xi_2d\xi_1\\
 &-\frac {a_{\mathbb{P},\mb\Sigma}}{4\pi^2 }\oint\limits_{|\xi_1|=1} \frac{|1+\sqrt{y}\xi_1|^4}{{\xi_1^2}}d\xi_1
  \oint\limits_{|\xi_2|=1}\frac{|1+\sqrt{y}\xi_2|^4}{{\xi_2^2}}d\xi_2
  =4y^2+4y(1+y)^2(a_{\mathbb{P},\mb\Sigma}+2),\\
  \vartheta_1=&\lim\limits_{r\rightarrow1^{+}}\frac{-1}{2\pi^2}\oint\limits_{|\xi_1|=1}
  \oint\limits_{|\xi_2|=1}\frac{|1+\sqrt{y}\xi_1|^2|1+\sqrt{y}\xi_2|^2}{(\xi_1-r\xi_2)^2}\,d\xi_2d\xi_1\\
 &-\frac {a_{\mathbb{P},\mb\Sigma}}{4\pi^2 }\oint\limits_{|\xi_1|=1} \frac{|1+\sqrt{y}\xi_1|^2}{{\xi_1^2}}d\xi_1
  \oint\limits_{|\xi_2|=1}\frac{|1+\sqrt{y}\xi_2|^2}{{\xi_2^2}}d\xi_2
  =y(a_{\mathbb{P},\mb\Sigma}+2),\\
   \vartheta_{12}=&\lim\limits_{r\rightarrow1^{+}}\frac{-1}{2\pi^2}\oint\limits_{|\xi_1|=1}
  \oint\limits_{|\xi_2|=1}\frac{|1+\sqrt{y}\xi_1|^4|1+\sqrt{y}\xi_2|^2}{(\xi_1-r\xi_2)^2}\,d\xi_2d\xi_1\\
 &-\frac {a_{\mathbb{P},\mb\Sigma}}{4\pi^2 }\oint\limits_{|\xi_1|=1} \frac{|1+\sqrt{y}\xi_1|^4}{{\xi_1^2}}d\xi_1
  \oint\limits_{|\xi_2|=1}\frac{|1+\sqrt{y}\xi_2|^2}{{\xi_2^2}}d\xi_2
  =2y(1+y)(a_{\mathbb{P},\mb\Sigma}+2).
	\end{align*}
Then the results of this theorem can be obtained through elementary calculations.

	\section{Some Auxiliary Lemmas}\label{lemmas}
	
	\begin{lem}[Proposition 2.6.1 in \cite{vershynin2018high}]\label{sg-1}
 Let $x_1, \ldots, x_n$ be independent, mean zero, sub-Gaussian random variables. Then $\sum_{i=1}^n x_i$ is also a sub-Gaussian random variable, and
\begin{align*}
\left\|\sum_{i=1}^n x_i\right\|_{g} \leq C \sum_{i=1}^n\left\|x_i\right\|_{g}
\end{align*}
where $C$ is an absolute constant.
	\end{lem}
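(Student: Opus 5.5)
We will prove Lemma \ref{sg-1} directly from the definition of the sub-Gaussian norm $\|\cdot\|_g$, using the standard equivalence between sub-Gaussian properties. The key idea is to go through the moment generating function (MGF) characterization, which behaves multiplicatively under independence.

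\textbf{Step 1: each summand has a controlled MGF.} We show that a mean-zero $X$ with $\|X\|_g\le K$ satisfies
$$\E\exp(\lambda X)\le \exp(C_0\lambda^2K^2) \quad \text{for all } \lambda\in\mathbb{R}.$$
By rescaling, assume $K=1$. From $\E e^{X^2}\le 2$ we get $\E X^{2k}\le 2\,k!$ for each $k$. Expand $e^{\lambda X}$ in Taylor series. The linear term vanishes because $\E X=0$. The odd moments are bounded via $|x|^{2k+1}\le (x^{2k}+x^{2k+2})/2$. Summing gives
$$\E e^{\lambda X}\le 1+\sum_{k\ge1}\frac{C^k\lambda^{2k}}{k!}\le e^{C\lambda^2}$$
for $|\lambda|\le 1$. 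For $|\lambda|\ge1$ we instead use $\lambda X\le \lambda^2/2+X^2/2$, which gives $\E e^{\lambda X}\le e^{\lambda^2/2}\,\E e^{X^2/2}\le \sqrt2\, e^{\lambda^2/2}\le e^{C\lambda^2}$.

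\textbf{Step 2: independence.} By independence the MGF factorizes:
$$\E\exp\Big(\lambda\sum_i x_i\Big)=\prod_i \E\exp(\lambda x_i)\le\exp\Big(C_0\lambda^2\sum_i\|x_i\|_g^2\Big).$$

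\textbf{Step 3: convert back to the $\|\cdot\|_g$ norm.} Let $S=\sum_i x_i$ and $\sigma^2=C_0\sum_i\|x_i\|_g^2$. Markov's inequality with the optimal choice of $\lambda$ gives the tail bound $P(|S|\ge t)\le 2e^{-t^2/(4\sigma^2)}$. Integrating this tail,
$$\E e^{S^2/c^2}=1+\int_0^\infty \frac{2t}{c^2}e^{t^2/c^2}P(|S|\ge t)\,dt,$$
which is at most $2$ once $c^2=C\sigma^2$ with $C$ large enough. Hence
$$\|S\|_g\le C\Big(\sum_i\|x_i\|_g^2\Big)^{1/2}\le C\sum_i\|x_i\|_g,$$
where the last inequality is just $\ell_2\le\ell_1$. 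This is the claim, and in fact the stronger square-root form is what was actually needed for $C_g^2$ in the proof of Lemma \ref{normlem}.

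The main obstacle is Step 1: the Taylor-series bound on the MGF needs the small-$\lambda$ and large-$\lambda$ regimes handled separately, and the mean-zero assumption is essential there to kill the linear term. Steps 2 and 3 are routine.
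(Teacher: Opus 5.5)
Your proof is correct and follows the standard argument behind the cited result. The paper gives no proof of its own, and Vershynin proves his Proposition 2.6.1 the same way: MGF bound, factorization under independence, then back to $\|\cdot\|_g$ via the tail integral. Your remark about the square-root form is also right: Vershynin actually states $\|\sum_i x_i\|_g^2\le C\sum_i\|x_i\|_g^2$, and it is this form, not the weaker $\ell_1$ version printed here, that the bound on $C_g^2$ in the proof of Lemma \ref{normlem} needs.
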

	
	\begin{lem}[Equation (2.14) in \cite{vershynin2018high}] \label{sg-2}
Let $x$ be  a mean zero  sub-Gaussian random variable with sub-Gaussian norm $\|x\|_{g}$. Then $${\rm P}\{|x| \geq t\} \leq 2 \exp \left(-c t^2 /\|x\|_{g}^2\right) \quad \text { for all } t \geq 0.$$	
	\end{lem}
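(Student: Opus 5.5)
The statement to prove is Lemma \ref{sg-2}, the tail bound for a mean-zero sub-Gaussian variable with norm $\|x\|_g$. I would prove it directly from the definition of the sub-Gaussian norm through an exponential Markov inequality. No centering argument is needed: the bound only uses the defining moment condition on $x^2$.

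\textbf{Step 1 (attaining the norm).} Set $K=\|x\|_g$. The map $c\mapsto \E\exp(x^2/c^2)$ is nonincreasing in $c$. By monotone convergence it is left-continuous as $c$ decreases to $K$, so the infimum in the definition is attained:
\begin{align*}
\E\exp(x^2/K^2)\le 2 .
\end{align*}
If $K=0$, then $x=0$ almost surely and the bound is trivial. So I would assume $K>0$.

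\textbf{Step 2 (Chernoff bound on $x^2$).} For any $t\ge 0$, apply Markov's inequality to the nonnegative variable $\exp(x^2/K^2)$:
\begin{align*}
{\rm P}\{|x|\ge t\}={\rm P}\{\exp(x^2/K^2)\ge \exp(t^2/K^2)\}\le e^{-t^2/K^2}\,\E\exp(x^2/K^2)\le 2e^{-t^2/K^2}.
\end{align*}
This is the claim with $c=1$. Any absolute constant $c\le 1$ also works, since the right-hand side $2\exp(-ct^2/\|x\|_g^2)$ is nonincreasing in $c$.

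\textbf{Main obstacle.} The only delicate point is justifying that the infimum defining $\|x\|_g$ is attained, which is what Step 1 handles. If one prefers to avoid the left-continuity argument, apply Step 2 with $K'=K+\eta$ for any $\eta>0$ and let $\eta\downarrow 0$; the bound is continuous in $K'$, so the same inequality follows. Everything else is a one-line application of Markov's inequality.
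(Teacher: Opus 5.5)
Your proof is correct and matches the argument behind the citation: the paper gives no proof of its own and defers to Vershynin, where this bound comes from applying Markov's inequality to $\exp(x^2/\|x\|_g^2)$, giving $c=1$ without using the mean-zero assumption. One small terminology slip: letting $c\downarrow K$ uses continuity from above (right-continuity in $c$), not left-continuity, but your monotone convergence argument itself is right.
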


	\begin{lem}[Burkholder's inequality]\label{lemma:8}
		Let $\{ {{\mathbf X}_k}\} $ be a complex martingale difference sequence with respect to the increasing $\sigma$-field. Then, for $p > 1,$ $${\rm E}{\left| {\sum_k {{{\mathbf  X}_k}} } \right|^p} \le {K_p}{\rm E}{\left({\sum_k{\left| {{{\mathbf  X}_k}} \right|} ^2}\right)^{p/2}}.$$
	\end{lem}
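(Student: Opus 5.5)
This is Burkholder's inequality, a classical martingale result stated here only as an auxiliary tool, so I will prove it from the square-function theory for martingales. First I reduce to the real case: write $\mathbf X_k=U_k+iV_k$. Real and imaginary parts of a complex martingale difference sequence are real martingale difference sequences for the same filtration. Since $|\sum_k \mathbf X_k|^p\le 2^{p-1}(|\sum_k U_k|^p+|\sum_k V_k|^p)$ and $\sum_k U_k^2\le\sum_k|\mathbf X_k|^2$ (likewise for $V_k$), it suffices to prove the inequality for real differences, at the cost of doubling the constant. I also take the sum finite, $k=1,\dots,N$, with $K_p$ independent of $N$. The general case then follows by monotone convergence on the right and Fatou on the left, since the partial sums converge in $L^p$ whenever the right side is finite.

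For the real case I set $S_n=\sum_{k\le n}X_k$ and $Q_n=(\sum_{k\le n}X_k^2)^{1/2}$. The plan is to derive the moment bound from a good-$\lambda$ distributional inequality. Fix $\beta>1$ and $0<\delta<\beta-1$, and use the stopping times
\[
\mu=\inf\{n:|S_n|>\lambda\},\qquad \nu=\inf\{n:|S_n|>\beta\lambda\},\qquad \sigma=\inf\{n:Q_{n+1}>\delta\lambda\}.
\]
The aim is to show
\[
\mathrm P\bigl(S^*>\beta\lambda,\ Q_N\le\delta\lambda\bigr)\le \frac{c\,\delta^2}{(\beta-1-\delta)^2}\,\mathrm P(S^*>\lambda),\qquad S^*=\max_{n\le N}|S_n|.
\]
To get this I will consider the stopped increment $\sum_k X_k I(\mu<k\le\nu\wedge\sigma)$. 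This is a martingale transform by a predictable indicator, since $\{\sigma\ge k\}$ depends only on $Q_k$, which is $\mathcal F_{k-1}$-measurable. By $L^2$ orthogonality its second moment is at most $\mathrm E\,Q_{\sigma\wedge N}^2 I(\mu<\infty)\le\delta^2\lambda^2\,\mathrm P(S^*>\lambda)$. On the event in question this increment exceeds $(\beta-1-\delta)\lambda$: the jump at time $\mu$ is at most $Q\le\delta\lambda$, and $\sigma\ge N$ on $\{Q_N\le\delta\lambda\}$. Chebyshev's inequality then gives the claim.

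Next I integrate the good-$\lambda$ inequality against $p\lambda^{p-1}d\lambda$. This gives
\[
\beta^{-p}\,\mathrm E S^{*p}\le \frac{c\,\delta^2}{(\beta-1-\delta)^2}\,\mathrm E S^{*p}+\delta^{-p}\,\mathrm E Q_N^p .
\]
Choosing $\delta$ small makes the first coefficient less than $\beta^{-p}/2$, and absorbing that term yields $\mathrm E S^{*p}\le K_p\,\mathrm E Q_N^p$. The absorption requires $\mathrm E S^{*p}<\infty$. I will handle this by first truncating, i.e.\ replacing $N$ by a stopping time localizing the martingale, or assuming bounded differences, and then removing the truncation by monotone convergence. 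Finally $|S_N|\le S^*$, which gives the stated inequality.

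The main obstacle is the good-$\lambda$ step, in particular making sure the overshoot at $\mu$ is controlled by $Q$ rather than by an uncontrolled jump. That is why $\sigma$ is defined via $Q_{n+1}$ (predictable), so that every jump up to $\sigma$ is bounded by $\delta\lambda$. For $1<p<2$ the argument works unchanged, since the good-$\lambda$ bound uses only $L^2$ and holds for all $p>0$. Alternatively, I could simply cite Burkholder (1973) for $p>1$. For the paper's use ($p=\ell\ge2$) the above suffices.
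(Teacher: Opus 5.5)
\textbf{There is a genuine gap, and it sits exactly at the obstacle you flagged.} Your $\sigma$ is not a stopping time, and $I(k\le\sigma)$ is not predictable. Indeed $\{\sigma\ge k\}=\{Q_k\le\delta\lambda\}$, and $Q_k^2=Q_{k-1}^2+X_k^2$ involves $X_k$. So this event lies in $\mathcal F_k$ but not in $\mathcal F_{k-1}$; likewise $\{\sigma\le n\}=\{Q_{n+1}>\delta\lambda\}$ lies only in $\mathcal F_{n+1}$.

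Consequently $\sum_k X_kI(\mu<k\le\nu\wedge\sigma)$ is not a martingale transform, and its second moment is not $\mathrm{E}\sum_k X_k^2 I(\cdot)$. Cutting $X_k$ off on $\{X_k^2\le\delta^2\lambda^2-Q_{k-1}^2\}$ creates a conditional drift $\mathrm{E}[X_kI(\cdot)\mid\mathcal F_{k-1}]\ne 0$ whenever the differences are not conditionally symmetric. If you switch to the genuine stopping time $\inf\{n:Q_n>\delta\lambda\}$, predictability is restored. But then the square function at that time contains the uncontrolled jump $X_\sigma$, and the bound $\delta^2\lambda^2\,\mathrm{P}(S^*>\lambda)$ is lost.

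\textbf{This cannot be patched locally.} With $Q_N$ alone as the control variable, your good-$\lambda$ inequality is false for general discrete-time martingales. Take $N=n$ and i.i.d.\ differences equal to $-a$ with probability $1-n^{-2}$ and to $a(n^2-1)$ with probability $n^{-2}$. With probability about $1-1/n$ no large jump occurs, and then $S^*=na$ while $Q_N=\sqrt n\,a$. Choose $\lambda=\sqrt n\,a/\delta$ and let $n$ be large. The left side of your inequality is then close to $1$, while the right side is at most $c\delta^2/(\beta-1-\delta)^2$. The same example gives $\mathrm{E}S^{*p}\gtrsim (na)^p$ but $\mathrm{E}Q_N^p\lesssim a^p(n^{p/2}+n^{2p-1})$. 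So your remark that the argument works for all $p>0$ would prove a false inequality for every $0<p<1$.

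\textbf{The missing idea is predictable control of the jumps,} and this is also where the restriction $p\ge 1$ enters. The standard repair is Davis' decomposition. With $X^*_{k-1}=\max_{j<k}|X_j|$, put
$X_k''=X_kI(|X_k|>2X^*_{k-1})-\mathrm{E}[X_kI(|X_k|>2X^*_{k-1})\mid\mathcal F_{k-1}]$ and $X_k'=X_k-X_k''$.
\begin{itemize}
\item For the first part, $|X_k'|\le 4X^*_{k-1}$. So your argument goes through for $\sum_k X_k'$ with control variable $Q_N'\vee X_N^*$ and the honest stopping time $\tau=\inf\{n:Q_n'\vee X_n^*>\delta\lambda\}$. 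At that time $|X'_\tau|\le 4X^*_{\tau-1}\le 4\delta\lambda$, hence $Q'_\tau\le 5\delta\lambda$.
\item For the remainder, $\sum_k|X_k|I(|X_k|>2X^*_{k-1})\le 2X_N^*\le 2Q_N$. Garsia's lemma (the dual Doob inequality, valid for $p\ge1$) then gives $\|\sum_k|X_k''|\|_p\le C_p\|Q_N\|_p$. This bounds both $\sum_k X_k''$ and the extra part of $Q_N'$.
\end{itemize}
Alternatively, you can follow Burkholder's original route: $L^p$ boundedness of martingale transforms combined with Khintchine's inequality.

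The paper gives no proof of this lemma; it simply quotes the classical result. Your closing option of citing Burkholder is therefore what the paper does. Your complex-to-real reduction and the localization step are fine.
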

	
\begin{acks}[Acknowledgments]
{\ycomment We extend our sincere appreciation to the editor, associate editor, and the anonymous reviewers for their insightful feedback and valuable comments. Their constructive contributions have significantly enhanced the quality and clarity of this paper. We are truly grateful for their time and efforts in reviewing our work. Yanqing Yin serves as the corresponding author, with research partially supported by grant NSFC 12271065.
Guangming Pan's research was partially supported by a Tier 2 grant -MOE-T2EP20123-0007 and a grant RG76/21 at Nanyang Technological University. Wang Zhou's research was partially supported by a grant A-8001450-00-00 at the National University of Singapore.}
\end{acks}



\begin{supplement}
\stitle{Supplementary material for ``Spectral Analysis of Gram Matrices with Missing at Random Observations: Convergence, Central Limit Theorems, and Applications in Statistical Inference"}
\sdescription{This supplementary material contains the proof of Lemma \ref{lec2} and Lemma \ref{lec1} as well as detailed proof of Theorem \ref{thm2}.}
\end{supplement}





\begin{thebibliography}{18}

\bibitem[\protect\citeauthoryear{Bai and Silverstein}{1998}]{BaiS98N}
\begin{barticle}[author]
\bauthor{\bsnm{Bai},~\bfnm{Z.~D.}\binits{Z.~D.}} \AND
  \bauthor{\bsnm{Silverstein},~\bfnm{Jack~W.}\binits{J.~W.}}
(\byear{1998}).
\btitle{{No eigenvalues outside the support of the limiting spectral
  distribution of large-dimensional sample covariance matrices}}.
\bjournal{The Annals of Probability}
\bvolume{26}
\bpages{316--345}.
\bdoi{10.1214/aop/1022855421}
\end{barticle}
\endbibitem

\bibitem[\protect\citeauthoryear{Bai and Silverstein}{2004}]{Bai2004a}
\begin{barticle}[author]
\bauthor{\bsnm{Bai},~\bfnm{Z.~D.}\binits{Z.~D.}} \AND
  \bauthor{\bsnm{Silverstein},~\bfnm{Jack~W.}\binits{J.~W.}}
(\byear{2004}).
\btitle{{Clt for linear spectral statistics of large-dimensional sample
  covariance matrices}}.
\bjournal{Annals of Probability}
\bvolume{32}
\bpages{553--605}.
\bdoi{10.1214/aop/1078415845}
\end{barticle}
\endbibitem

\bibitem[\protect\citeauthoryear{Bai and Zhou}{2008}]{BaiZ08L}
\begin{barticle}[author]
\bauthor{\bsnm{Bai},~\bfnm{Zhidong}\binits{Z.}} \AND
  \bauthor{\bsnm{Zhou},~\bfnm{Wang}\binits{W.}}
(\byear{2008}).
\btitle{{Large sample covariance matrices without independence structures in
  columns}}.
\bjournal{Statistica Sinica}
\bvolume{18}
\bpages{425--442}.
\end{barticle}
\endbibitem

\bibitem[\protect\citeauthoryear{Chen and Qin}{2010}]{ChenQ10T}
\begin{barticle}[author]
\bauthor{\bsnm{Chen},~\bfnm{Song~Xi}\binits{S.~X.}} \AND
  \bauthor{\bsnm{Qin},~\bfnm{Ying~Li}\binits{Y.~L.}}
(\byear{2010}).
\btitle{{A two-sample test for high-dimensional data with applications to
  gene-set testing}}.
\bjournal{Annals of Statistics}
\bvolume{38}
\bpages{808--835}.
\bdoi{10.1214/09-AOS716}
\end{barticle}
\endbibitem

\bibitem[\protect\citeauthoryear{Chen et~al.}{2021}]{chen2021bridging}
\begin{barticle}[author]
\bauthor{\bsnm{Chen},~\bfnm{Yuxin}\binits{Y.}},
  \bauthor{\bsnm{Fan},~\bfnm{Jianqing}\binits{J.}},
  \bauthor{\bsnm{Ma},~\bfnm{Cong}\binits{C.}} \AND
  \bauthor{\bsnm{Yan},~\bfnm{Yuling}\binits{Y.}}
(\byear{2021}).
\btitle{{Bridging convex and nonconvex optimization in robust PCA: Noise,
  outliers, and missing data}}.
\bjournal{Annals of statistics}
\bvolume{49}
\bpages{2948}.
\end{barticle}
\endbibitem

\bibitem[\protect\citeauthoryear{Fan, Guo and Zheng}{2022}]{Fan2020}
\begin{barticle}[author]
\bauthor{\bsnm{Fan},~\bfnm{Jianqing}\binits{J.}},
  \bauthor{\bsnm{Guo},~\bfnm{Jianhua}\binits{J.}} \AND
  \bauthor{\bsnm{Zheng},~\bfnm{Shurong}\binits{S.}}
(\byear{2022}).
\btitle{{Estimating Number of Factors by Adjusted Eigenvalues Thresholding}}.
\bjournal{Journal of the American Statistical Association}
\bvolume{117}
\bpages{852--861}.
\bdoi{10.1080/01621459.2020.1825448}
\end{barticle}
\endbibitem

\bibitem[\protect\citeauthoryear{Hu et~al.}{2019}]{Hu2019}
\begin{barticle}[author]
\bauthor{\bsnm{Hu},~\bfnm{Jiang}\binits{J.}},
  \bauthor{\bsnm{Li},~\bfnm{Weiming}\binits{W.}},
  \bauthor{\bsnm{Liu},~\bfnm{Zhi}\binits{Z.}} \AND
  \bauthor{\bsnm{Zhou},~\bfnm{Wang}\binits{W.}}
(\byear{2019}).
\btitle{{High-dimensional covariance matrices in elliptical distributions with
  application to spherical test}}.
\bjournal{Annals of Statistics}
\bvolume{47}
\bpages{527--555}.
\bdoi{10.1214/18-AOS1699}
\end{barticle}
\endbibitem

\bibitem[\protect\citeauthoryear{Jurczak and Rohde}{2017}]{JurczakR17S}
\begin{barticle}[author]
\bauthor{\bsnm{Jurczak},~\bfnm{Kamil}\binits{K.}} \AND
  \bauthor{\bsnm{Rohde},~\bfnm{Angelika}\binits{A.}}
(\byear{2017}).
\btitle{{Spectral analysis of high-dimensional sample covariance matrices with
  missing observations}}.
\bjournal{Bernoulli}
\bvolume{23}
\bpages{2466--2532}.
\bdoi{10.3150/16-BEJ815}
\end{barticle}
\endbibitem

\bibitem[\protect\citeauthoryear{Karoui}{2009}]{ElKaroui09C}
\begin{barticle}[author]
\bauthor{\bsnm{Karoui},~\bfnm{Noureddine~El}\binits{N.~E.}}
(\byear{2009}).
\btitle{{Concentration of measure and spectra of random matrices: Applications
  to correlation matrices, elliptical distributions and beyond}}.
\bjournal{Annals of Applied Probability}
\bvolume{19}
\bpages{2362--2405}.
\bdoi{10.1214/08-AAP548}
\end{barticle}
\endbibitem

\bibitem[\protect\citeauthoryear{Lounici}{2014}]{lounici2014high}
\begin{barticle}[author]
\bauthor{\bsnm{Lounici},~\bfnm{Karim}\binits{K.}}
(\byear{2014}).
\btitle{{High-dimensional covariance matrix estimation with missing
  observations}}.
\bjournal{Bernoulli}
\bvolume{20}
\bpages{1029--1058}.
\bdoi{10.3150/12-BEJ487}
\end{barticle}
\endbibitem

\bibitem[\protect\citeauthoryear{Mar{\v{c}}enko and
  Pastur}{1967}]{marchenko1967distribution}
\begin{barticle}[author]
\bauthor{\bsnm{Mar{\v{c}}enko},~\bfnm{V~A}\binits{V.~A.}} \AND
  \bauthor{\bsnm{Pastur},~\bfnm{L~A}\binits{L.~A.}}
(\byear{1967}).
\btitle{{Distribution of Eigenvalues for Some Sets of Random Matrices}}.
\bjournal{Mathematics of the USSR-Sbornik}
\bvolume{1}
\bpages{457--483}.
\bdoi{10.1070/sm1967v001n04abeh001994}
\end{barticle}
\endbibitem

\bibitem[\protect\citeauthoryear{Pan and
  Zhou}{2008}]{PanZ08C}
\begin{barticle}[author]
\bauthor{\bsnm{Pan},~\bfnm{G. M.}\binits{G.~M.}} \AND
  \bauthor{\bsnm{Zhou},~\bfnm{W.}\binits{W.}}
(\byear{2008}).
\btitle{{Central limit theorem for signal-to-interference ratio of reduced rank linear receiver}}.
\bjournal{Annals of Applied Probability}
\bvolume{18}
\bpages{1232-1270}.
\end{barticle}
\endbibitem


\bibitem[\protect\citeauthoryear{Silverstein}{1995}]{Silverstein95S}
\begin{barticle}[author]
\bauthor{\bsnm{Silverstein},~\bfnm{Jack~W.}\binits{J.~W.}}
(\byear{1995}).
\btitle{{Strong convergence of the empirical distribution of eigenvalues of
  large dimensional random matrices}}.
\bjournal{Journal of Multivariate Analysis}
\bvolume{55}
\bpages{331--339}.
\bdoi{10.1006/jmva.1995.1083}
\end{barticle}
\endbibitem

\bibitem[\protect\citeauthoryear{Cai, Liu and Xia}{2014}]{CaiL14T}
\begin{barticle}[author]
\bauthor{\bsnm{{Cai}},~\bfnm{T.}\binits{T.}},
  \bauthor{\bsnm{Liu},~\bfnm{Weidong}\binits{W.}} \AND
  \bauthor{\bsnm{Xia},~\bfnm{Yin}\binits{Y.}}
(\byear{2014}).
\btitle{{Two-sample test of high dimensional means under dependence}}.
\bjournal{Journal of the Royal Statistical Society. Series B: Statistical
  Methodology}
\bvolume{76}
\bpages{349--372}.
\bdoi{10.1111/rssb.12034}
\end{barticle}
\endbibitem

\bibitem[\protect\citeauthoryear{Cai  and Zhang}{2019}]{tony2019high}
\begin{barticle}[author]
\bauthor{\bsnm{{Cai}},~\bfnm{T}\binits{T.}} \AND
  \bauthor{\bsnm{Zhang},~\bfnm{Linjun}\binits{L.}}
(\byear{2019}).
\btitle{{High dimensional linear discriminant analysis: optimality, adaptive
  algorithm and missing data}}.
\bjournal{Journal of the Royal Statistical Society Series B: Statistical
  Methodology}
\bvolume{81}
\bpages{675--705}.
\end{barticle}
\endbibitem

\bibitem[\protect\citeauthoryear{Vershynin}{2018}]{vershynin2018high}
\begin{bbook}[author]
\bauthor{\bsnm{Vershynin},~\bfnm{Roman}\binits{R.}}
(\byear{2018}).
\btitle{{High-dimensional probability: An introduction with applications in
  data science}}
\bvolume{47}.
\bpublisher{Cambridge university press}.
\end{bbook}
\endbibitem

\bibitem[\protect\citeauthoryear{Wachter}{2007}]{Wachter78S}
\begin{barticle}[author]
\bauthor{\bsnm{Wachter},~\bfnm{Kenneth~W.}\binits{K.~W.}}
(\byear{2007}).
\btitle{{The Strong Limits of Random Matrix Spectra for Sample Matrices of
  Independent Elements}}.
\bjournal{The Annals of Probability}
\bvolume{6}
\bpages{1--18}.
\bdoi{10.1214/aop/1176995607}
\end{barticle}
\endbibitem

\bibitem[\protect\citeauthoryear{Wang, Peng and Li}{2015}]{Wang2015high}
\begin{barticle}[author]
\bauthor{\bsnm{Wang},~\bfnm{Lan}\binits{L.}},
  \bauthor{\bsnm{Peng},~\bfnm{Bo}\binits{B.}} \AND
  \bauthor{\bsnm{Li},~\bfnm{Runze}\binits{R.}}
(\byear{2015}).
\btitle{{A high-dimensional nonparametric multivariate test for mean vector}}.
\bjournal{Journal of the American Statistical Association}
\bvolume{110}
\bpages{1658--1669}.
\end{barticle}
\endbibitem

\end{thebibliography}

\end{document}


\begin{frontmatter}
\title{supplementary material for ``Spectral Analysis of Gram Matrices with Missing at Random Observations: Convergence, Central Limit Theorems, and Applications in Statistical Inference"}
		\runtitle{Covariance matrix under missing observations}

\begin{aug}
\author[A]{\fnms{Huiqin} \snm{ Li}
			\ead[label=e1]{lihq118@nenu.edu.cn}},
\author[B]{\fnms{Guangming} \snm{ Pan}
			\ead[label=e2]{gmpan@ntu.edu.sg}},
\author[A]{\fnms{Yanqing} \snm{ Yin}
			\ead[label=e3]{yinyq@cqu.edu.cn}}			
			\and
\author[C]{\fnms{Wang} \snm{ Zhou}
			\ead[label=e4]{wangzhou@nus.edu.sg}}
			
			\runauthor{Li, Pan, Yin and Zhou.}
\address[A]{School of Mathematics and Statistics,
			Chongqing University\printead[presep={,\ }]{e1,e3}}

\address[B]{Division of Mathematical Sciences, Nanyang Technological University \printead[presep={,\ }]{e2}}

\address[C]{Department of Statistics and Data Science,  National University of Singapore \printead[presep={,\ }]{e4}}
\end{aug}

\end{frontmatter}
	
\section{Proof of Lemma \ref{lec2} and Lemma \ref{lec1}}	
\subsection{Proof of Lemma \ref{lec2}}
		Using the property of the conditional expectation, we have
		\begin{align*}
			&\rre\left[\(\by^T\bbD{\bf A}\bbD\by-\tr\(\bA\bT_n\)\)\(\by^T\bbD{\bf B}\bbD\by-\tr\(\bB\bT_n\)\)\right]\\
			=&\rre\left[\left({\by^T\bbD \bA\bbD\by}-\rtr\left[\bA\bbD\bSi\bbD\right]\right)\left({\by^T\bbD\bB\bbD\by}-\rtr\left[\bB\bbD\bSi\bbD\right]\right)\right]\\
			&+\rre\left[\rtr\left(\bA\bbD\bSi\bbD\right)-\tr\(\bA\bT_n\)\right]\left[\rtr\left(\bB\bbD\bSi\bbD\right)-\tr\(\bB\bT_n\)\right]\\
			=&\(\nu_4-3\)\rre\rtr\left[\(\bSi^{1/2}\bbD \bA\bbD\bSi^{1/2}\)\circ\(\bSi^{1/2}\bbD \bB\bbD\bSi^{1/2}\)\right]+2\rre\rtr\(\bbD \bA\bbD\bSi\bbD \bB\bbD\bSi\)\\
			&+\rre\left[\rtr\left(\bA\bbD\bSi\bbD\right)-\tr\(\bA\bT_n\)\right]\left[\rtr\left(\bB\bbD\bSi\bbD\right)-\tr\(\bB\bT_n\)\right]\\
			\triangleq&\(\nu_4-3\)\mathcal{H}_1+2\mathcal{H}_2+\mathcal{H}_3.	
		\end{align*}
		
		To begin with, we deal with $\mathcal{H}_1$. 
		Note that
		\begin{align*}
			\mathcal{H}_1=&\rre\rtr\left[\(\bSi^{1/2}\(\bbD-\bbP\) \bA\(\bbD-\bbP\)\bSi^{1/2}\)\circ\(\bSi^{1/2}\(\bbD-\bbP\) \bB\(\bbD-\bbP\)\bSi^{1/2}\)\right]\\
			&+\Bigg\{2\rre\rtr\left[\(\bSi^{1/2}\(\bbD-\bbP\) \bA\(\bbD-\bbP\)\bSi^{1/2}\)\circ\(\bSi^{1/2}\(\bbD-\bbP\) \bB\bbP\bSi^{1/2}\)\right]\\
			&+\rre\rtr\left[\(\bSi^{1/2}\(\bbD-\bbP\) \bA\bbP\bSi^{1/2}\)\circ\(\bSi^{1/2}\(\bbD-\bbP\) \bB\(\bbD-\bbP\)\bSi^{1/2}\)\right]\\
			&+\rre\rtr\left[\(\bSi^{1/2}\(\bbD-\bbP\) \bA^T\bbP\bSi^{1/2}\)\circ\(\bSi^{1/2}\(\bbD-\bbP\) \bB\(\bbD-\bbP\)\bSi^{1/2}\)\right]\Bigg\}\\
			&+\Bigg\{\rre\rtr\left[\(\bSi^{1/2}\(\bbD-\bbP\) \bA\(\bbD-\bbP\)\bSi^{1/2}\)\circ\(\bSi^{1/2}\bbP \bB\bbP\bSi^{1/2}\)\right]\\		
			&+2\rre\rtr\left[\(\bSi^{1/2}\(\bbD-\bbP\) \bA\bbP\bSi^{1/2}\)\circ\(\bSi^{1/2}\(\bbD-\bbP\) \bB\bbP\bSi^{1/2}\)\right]\\
			&+2\rre\rtr\left[\(\bSi^{1/2}\(\bbD-\bbP\) \bA^T\bbP\bSi^{1/2}\)\circ\(\bSi^{1/2}\(\bbD-\bbP\) \bB\bbP\bSi^{1/2}\)\right]\\	
			&+\rre\rtr\left[\(\bSi^{1/2}\bbP \bA\bbP\bSi^{1/2}\)\circ\(\bSi^{1/2}\(\bbD-\bbP\) \bB\(\bbD-\bbP\)\bSi^{1/2}\)\right]\Bigg\}\\
			&+\rtr\left[\(\bSi^{1/2}\bbP \bA\bbP\bSi^{1/2}\)\circ\(\bSi^{1/2}\bbP \bB\bbP\bSi^{1/2}\)\right]\\
			\triangleq&\mathcal{H}_{11}+\mathcal{H}_{12}+\mathcal{H}_{13}+\rtr\left[\(\bSi^{1/2}\bbP \bA\bbP\bSi^{1/2}\)\circ\(\bSi^{1/2}\bbP \bB\bbP\bSi^{1/2}\)\right].
		\end{align*}
		By calculation, we see
		\begin{align*}
			\mathcal{H}_{11}=&\sum_{i,j_1,j_2,k_1,k_2} r_{j_1i}r_{k_1i} r_{j_2i}r_{k_2i}a_{j_1k_1}b_{j_2k_2}\(d_{j_1}-p_{j_1}\)\(d_{j_2}-p_{j_2}\)\(d_{k_1}-p_{k_1}\)\(d_{k_2}-p_{k_2}\)\\
			=&\sum_{i,j}r_{ji}^4a_{jj}b_{jj}\(d_{j}-p_{j}\)^4+2\sum_{i,j\neq k} r_{ji}^2r_{ki}^2a_{jk}b_{jk}\(d_{j}-p_{j}\)^2\(d_{k}-p_{k}\)^2\\
			&+\sum_{i,j\neq k}r_{ji}^2r_{ki}^2a_{jj}b_{kk}\(d_{j}-p_{j}\)^2\(d_{k}-p_{k}\)^2\\
			=&2\rtr\left[\bbP^{(2)}\(\bA\circ\bB\)\bbP^{(2)}\(\bSi^{1/2}\circ\bSi^{1/2}\)^2\right]\\
			&+\rtr\left[\(\bSi^{1/2}\(\bA\circ\bbP^{(2)}\)\bSi^{1/2}\)\circ\(\bSi^{1/2}\(\bB\circ\bbP^{(2)}\)\bSi^{1/2}\)\right]\\
			&+\rtr\left[\(\bbP^{(4)}-3\(\bbP^{(2)}\)^2\)\circ\bA\circ\bB\circ\(\bSi^{1/2}\circ\bSi^{1/2}\)^2\right],
		\end{align*}
		where $r_{jk}=\be_i^T\bSi^{1/2}\be_j$. It is obvious that
		\begin{align*}
			\mathcal{H}_{12}=&2\rtr\left[\(\bSi^{1/2}\circ\bSi^{1/2}\circ\bSi^{1/2}\)\(\bA\circ\bbP^{(3)}\)\bB\bbP\bSi^{1/2}\right]\\
			&+\rtr\left[\(\bSi^{1/2}\circ\bSi^{1/2}\circ\bSi^{1/2}\)\(\bB\circ\bbP^{(3)}\)\(\bA+\bA^T\)\bbP\bSi^{1/2}\right]	
		\end{align*}
		and 
		\begin{align*}
			\mathcal{H}_{13} =&\rtr\left[\(\bSi^{1/2}\(\bbP^{(2)}\circ \bA\)\bSi^{1/2}\)\circ\(\bSi^{1/2}\bbP \bB\bbP\bSi^{1/2}\)\right]\\
			&+2\rtr\left[\(\bSi^{1/2}\circ
			\bSi^{1/2}\)\bbP^{(2)} \(\(\bA\bbP\bSi^{1/2}+\bA^T\bbP\bSi^{1/2}\)\circ\(\bB\bbP\bSi^{1/2}\)\)\right]\\
			&+\rtr\left[\(\bSi^{1/2}\bbP \bA\bbP\bSi^{1/2}\)\circ\(\bSi^{1/2}\(\bbP^{(2)}\circ \bB\)\bSi^{1/2}\)\right].
		\end{align*}
		Thus, we obtain
		\begin{align*}
			\mathcal{H}_1=&\rtr\left[\(\bSi^{1/2}\(\bbP \bA\bbP+\bbP^{(2)}\circ \bA\)\bSi^{1/2}\)\circ\(\bSi^{1/2}\(\bbP \bB\bbP+\bbP^{(2)}\circ \bB\)\bSi^{1/2}\)\right]\\
			&+2\rtr\left[\bbP^{(2)}\(\bA\circ\bB\)\bbP^{(2)}\(\bSi^{1/2}\circ\bSi^{1/2}\)^2\right]\\
			&+\rtr\left[\(\bbP^{(4)}-3\(\bbP^{(2)}\)^2\)\circ\bA\circ\bB\circ\(\bSi^{1/2}\circ\bSi^{1/2}\)^2\right]\\
			&+2\rtr\left[\(\bSi^{1/2}\circ\bSi^{1/2}\circ\bSi^{1/2}\)\(\bA\circ\bbP^{(3)}\)\bB\bbP\bSi^{1/2}\right]\\
			&+\rtr\left[\(\bSi^{1/2}\circ\bSi^{1/2}\circ\bSi^{1/2}\)\(\bB\circ\bbP^{(3)}\)\(\bA+\bA^T\)\bbP\bSi^{1/2}\right]\\
			&+2\rtr\left[\(\bSi^{1/2}\circ
			\bSi^{1/2}\)\bbP^{(2)} \(\(\bA\bbP\bSi^{1/2}+\bA^T\bbP\bSi^{1/2}\)\circ\(\bB\bbP\bSi^{1/2}\)\)\right].
		\end{align*}
		
		Secondly, by applying the same method as for $\mathcal{H}_1$, it follows that
		\begin{align*}
			\mathcal{H}_2	=&\rre\rtr\(\(\bbD-\bbP\) \bA\(\bbD-\bbP\)\bSi\(\bbD-\bbP\) \bB\(\bbD-\bbP\)\bSi\)\\
			&+2\rtr\(\bbP^{(3)}\circ\bA\circ\bSi\circ\(\bB\bbP\bSi\)  \)+\rtr\( \bbP^{(3)}\circ\bB\circ \bSi\circ\(\bA\bbP\bSi\)\)\\
			&+\rtr\( \bbP^{(3)}\circ\bB\circ \bSi\circ\(\bA^T\bbP\bSi\)\)+\rtr\(\bbP^{(2)}\circ\(\bA\bbP\bSi\)\circ \(\bB\bbP\bSi\) \)\\
			&+\rtr\(\bbP^{(2)}\circ\(\bA^T\bbP\bSi\)\circ \(\bB\bbP\bSi\) \)+\rtr\(\bT_n\bA\bbP\bSi\bbP \bB \)+\rtr\(\(\bbP^{(2)}\circ\bSi\)\bA^T\bbP\bSi\bbP \bB \)\\&+\rtr\(\bbP^{(2)}\circ\bA\circ\(\bSi\bbP \bB\bbP\bSi\) \)+\rtr\(\bbP^{(2)}\circ\bB\circ\(\bSi\bbP \bA\bbP\bSi\)\)\\
			=&\rtr\(\bT_n\bA\bT_n \bB \)+\rtr\left[\bbP^{(2)}\(\bA\circ\bB\)\bbP^{(2)}\(\bSi\circ\bSi\)\right]+\rtr\left[\(\bA\circ\bbP^{(2)}\)\bSi\(\bB\circ\bbP^{(2)}\)\bSi\right]\\
			&+2\rtr\(\bbP^{(3)}\circ\bA\circ\bSi\circ\(\bB\bbP\bSi\)  \)+\rtr\(\(\bbP^{(2)}\circ\bA\)\bSi\bbP \bB\bbP\bSi \)+\rtr\(\(\bbP^{(2)}\circ\bB\)\bSi\bbP \bA\bbP\bSi\)\\
				&+\rtr\bigg[ \(\bbP^{(3)}\circ\bB\circ \bSi+\bbP^{(2)}\circ \(\bB\bbP\bSi\)\)\(\bA\bbP\bSi+\bA^T\bbP\bSi\)\bigg]\\
			&+\rtr\left[\bA\circ\bSi\circ\bB\circ\bSi\circ\(\bbP^{(4)}-3\(\bbP^{(2)}\)^2\)\right],
		\end{align*}
		where
		\begin{align*}
			&\rre\rtr\(\(\bbD-\bbP\) \bA\(\bbD-\bbP\)\bSi\(\bbD-\bbP\) \bB\(\bbD-\bbP\)\bSi\)\\
			=&\sum_{i,j,k,\ell=1}^pa_{ij}\sigma_{jk}b_{k\ell}\sigma_{\ell i}\rre\left[\(d_i-\mathfrak{p}_i\)\(d_j-p_j\)\(d_k-p_k\)\(d_{\ell}-p_{\ell}\)\right]\\
			=&\sum_{i\neq j=1}^pa_{ij}\sigma_{ji}b_{ij}\sigma_{j i}\(\bbP^{(2)}\)_{ii}\(\bbP^{(2)}\)_{jj}+\sum_{i\neq j=1}^pa_{ij}\sigma_{jj}b_{ji}\sigma_{i i}\(\bbP^{(2)}\)_{ii}\(\bbP^{(2)}\)_{jj}\\
			&+\sum_{i\neq j=1}^pa_{ii}\sigma_{ij}b_{jj}\sigma_{j i}\(\bbP^{(2)}\)_{ii}\(\bbP^{(2)}\)_{jj}+\sum_{i=1}^pa_{ii}\sigma_{ii}b_{ii}\sigma_{i i}\rre\left[\(d_i-\mathfrak{p}_i\)^4\right]\\
			=&\rtr\left[\bbP^{(2)}\(\bA\circ\bB\)\bbP^{(2)}\(\bSi\circ\bSi\)\right]+\rtr\left[\(\bbP^{(2)}\circ\bSi\)\bA\(\bbP^{(2)}\circ\bSi\)\bB\right]\\
			&+\rtr\left[\(\bA\circ\bbP^{(2)}\)\bSi\(\bB\circ\bbP^{(2)}\)\bSi\right]+\rtr\left[\bA\circ\bSi\circ\bB\circ\bSi\circ\(\bbP^{(4)}-3\(\bbP^{(2)}\)^2\)\right].
		\end{align*}
		
		Thirdly, let $\bd$ denote a vector consisting of the diagonal  entries of $\bbD$ and $\boldsymbol\xi=\bd-\bp$ where $\bp $ is the expectation of $\bd$. Then we deduce
		\begin{align*}
			\mathcal{H}_3=&\rre\left[\boldsymbol\xi^T\left(\bA\circ\bSi\right)\boldsymbol\xi-\tr\left(\bbP^{(2)}(\bA\circ \bSi)\right)\right]\left[\boldsymbol\xi^T\left(\bB\circ\bSi\right)\boldsymbol\xi-\tr\left[\bbP^{(2)}(\bB\circ \bSi)\right]\right]\\
			&+2\rre\big[\rtr\left(\(\bbD-\bbP\)\bSi\(\bbD-\bbP\)\bA\right)\rtr\left(\bB\bbP\bSi\(\bbD-\bbP\)\right)\big]\\
			&+\rre\left[\rtr\left(\bA\bbP\bSi\(\bbD-\bbP\)\right)\rtr\left(\(\bbD-\bbP\)\bSi\(\bbD-\bbP\)\bB\right)\right]\\
			&+\rre\left[\rtr\left(\bA^T\bbP\bSi\(\bbD-\bbP\)\right)\rtr\left(\(\bbD-\bbP\)\bSi\(\bbD-\bbP\)\bB\right)\right]\\
			&+2\rre\left[\rtr\(\bA\bbP\bSi\(\bbD-\bbP\)\)\rtr\(\bB\bbP\bSi\(\bbD-\bbP\)\)\right]\\
			&+2\rre\left[\rtr\(\bA^T\bbP\bSi\(\bbD-\bbP\)\)\rtr\(\bB\bbP\bSi\(\bbD-\bbP\)\)\right]\\
			=&\rtr\left[\bA\circ\bSi\circ\bB\circ\bSi\circ\(\bbP^{(4)}-3\(\bbP^{(2)}\)^2\)\right]+2\rtr\left[\(\bA\circ\bSi\)\bbP^{(2)}\(\bB\circ\bSi\)\bbP^{(2)}\right]\\
			&+2\rtr\left[\bbP^{(3)}\circ\bA\circ\bSi\circ\(\bB\bbP\bSi\)\right]+\rtr\left[\bbP^{(3)}\circ\bB\circ\bSi\circ\(\bA\bbP\bSi+\bA^T\bbP\bSi\)\right]\\
			&+2\rtr\left[\bbP^{(2)}\circ\(\bA\bbP\bSi+\bA^T\bbP\bSi\)\circ\(\bB\bbP\bSi\)\right].	
		\end{align*}
		
	By combining the calculations above, we have successfully completed the proof of this lemma.
	
\subsection{Proof of Lemma \ref{lec1}}
		Using Lemma \ref{lep1}, one has
		\begin{align*}
			&\rre\left|\by^T\bbD{\bf A}\bbD\by-\rtr\({\bf A}\bT_n\)\right|^{\ell}\\
			\le& C_{\ell}\rre\left|\by^T\bbD{\bf A}\bbD\by-\rtr\(\bbD{\bf A}\bbD\bSi\)\right|^{\ell}+C_{\ell}\rre\left|\rtr\(\bbD{\bf A}\bbD\bSi\)-\rtr\({\bf A}\bT_n\)\right|^{\ell}\\
			\le& C_{\ell}\left[\left(n\nu_4\|\bA\|^2\right)^{\ell/2}+\nu_{2\ell}n\|\bA\|^{\ell}\right]+C_{\ell}\rre\left|\rtr\(\bbD{\bf A}\bbD\bSi\)-\rtr\({\bf A}\bT_n\)\right|^{\ell}.	
		\end{align*}
		Let $\bd$ denote a vector consisting of the diagonal  entries of $\bbD$ and $\boldsymbol\xi=\bd-\bp$ where $\bp $ is the expectation of $\bd$. Then we  have 
		\begin{align*}
			&\rtr\(\bbD{\bf A}\bbD\bSi\)-\rtr\({\bf A}\bT_n\)
			=\boldsymbol\xi^T\(\bA\circ\bSi\)\boldsymbol\xi+2\boldsymbol\xi^T\(\bA\circ\bSi\)\bp-\rtr\left[\bbP^{(2)}\({\bf A}\circ\bSi\)\right].
		\end{align*}
		Thus, by Lemma \ref{lep1} and Lemma \ref{lemma:8},
		\begin{align*}
			&\rre\left|\rtr\(\bbD{\bf A}\bbD\bSi\)-\rtr\({\bf A}\bT_n\)\right|^{\ell}\\	
			\le& C_{\ell}\rre\left|\boldsymbol\xi^T\(\bA\circ\bSi\)\boldsymbol\xi-\rtr\left[\bbP^{(2)}\({\bf A}\circ\bSi\)\right]\right|^{\ell}+C_{\ell}\rre\left|\boldsymbol\xi^T\(\bA\circ\bSi\)\bp\right|^{\ell}\\
			\le&C_{\ell}\left[\left(n\|\bA\|^2\right)^{\ell/2}+n\|\bA\|^{\ell}\right]+C_{\ell}\rre\left|\sum_{j=1}^p\(d_{j}-p_j\)\(\sum_{ t=1}^pp_ta_{jt}\sigma_{jt}\)\right|^{\ell}\\
			\le&C_{\ell}\left[\left(n\|\bA\|^2\right)^{\ell/2}+n\|\bA\|^{\ell}\right]+C_{\ell}\rre\left[\sum_{j=1}^p|d_{j}-p_j|^2\left|\sum_{ t=1}^pp_ta_{jt}\sigma_{jt}\right|^2\right]^{\ell/2}\\
			\le&C_{\ell}\left[\left(n\|\bA\|^2\right)^{\ell/2}+n\|\bA\|^{\ell}\right]+C_{\ell}\left[n\|\bA\|^2\right]^{\ell/2}\le C_{\ell}\left[\left(n\|\bA\|^2\right)^{\ell/2}+n\|\bA\|^{\ell}\right].
		\end{align*}
		Consequently, one concludes
		\begin{align*}
			&\rre\left|\by^T\bbD{\bf A}\bbD\by-\rtr\({\bf A}\bT_n\)\right|^{\ell}
			\le C_{\ell}\left[\left(n\nu_4\|\bA\|^2\right)^{\ell/2}+\nu_{2\ell}n\|\bA\|^{\ell}\right].	
		\end{align*}

\section{Detailed proof of Theorem \ref{thm2}}
		
	\subsection{The limiting distribution of $M_{n1}(z)$}\label{ssee}
	In this section, our objective is to determine the limiting distribution of $M_{n1}(z)$. Specifically, we aim to demonstrate that for any positive integer $r$, the sum
$$\sum_{j=1}^r\alpha_jM_{n1}(z_j) \qquad\Im z_j\neq0$$
converges in distribution to a Gaussian random variable. To achieve this, we will utilize the central limit theorem for martingale difference sequences.
	
To begin with, we introduce some notation. For $k=1,2,\cdots,n$, denote the following quatities
\begin{align*}
	&\bM_{jk}=\bM_k-\frac1n\bss\bbD_j\by_j\by_j^T\bbD_j\bss, \quad \varepsilon_k(z)= {\by_k^T\bbD_k \bss{\bM_k^{ - 1}}\bss\bbD_k\by_k}-\rtr{\bM_k^{ - 1}}\mb \Psi_n,\\&\hat\varepsilon_k(z)= {\by_k^T\bbD_k \bss{\bM_k^{ - 1}}\bss\bbD_k\by_k}-\rre\rtr{\bM_k^{ - 1}}\mb \Psi_n,\\
	&\delta_k(z)={\by_k^T\bbD_k\bss {\bM_k^{ - 2}}\bss\bbD_k\by_k}-\rtr{\bM_k^{ - 2}}\mb\Psi_n,
\end{align*} and
\begin{align*} &\tilde{\beta}_k(z)=\frac1{1+n^{-1}\rtr\left({\bM_k^{ - 1}}\mb\Psi_n\right)},\quad \beta_{jk}(z)=\frac1{1+n^{-1}\by_j^T\bbD_j\bss\bM_{jk}^{-1}\bss\bbD_j\by_j}.
\end{align*} 
Using \eqref{3ad} and \begin{align}
	\beta_k(z)=&\tilde\beta_k(z)-\frac1n\beta_k(z)	\tilde\beta_k(z)\varepsilon_k(z)\label{3ac1}\\
	=&\tilde\beta_k(z)-\frac1n	\tilde\beta_k^2(z)\varepsilon_k(z)+\frac1{n^2}\beta_k(z)	\tilde\beta_k^2(z)\varepsilon_k^2(z),\label{3ac2}
\end{align}
 we get
	\begin{align*}
		{{ M}_{n1}}\left(z\right)
		=&-\frac{1}{n}\sum\limits_{k = 1}^{n}{\rm E}_{k}\tilde\beta_k(z)\delta_k(z)+\frac{1}{n^2}\sum\limits_{k = 1}^{n}{\rm E}_{k}	\tilde\beta_k^2(z)\varepsilon_k(z)\rtr\left(\bss{\bM_k^{ - 2}}\bss\bT_n\right)+o_p(1).
	\end{align*}
	Define
	\begin{align*}
		\tau_k(z)=&\frac{1}{n}{\rm E}_{k}\tilde\beta_k(z)\delta_k(z)-\frac{1}{n^2}{\rm E}_{k}	\tilde\beta_k^2(z)\varepsilon_k(z)\rtr\left[{\bM_k^{ - 2}}\mb\Psi_n\right]
		=n^{-1}\frac{d}{dz}{\rm E}_{k}\tilde{\beta}_k(z)\varepsilon_k(z).
	\end{align*}
	Thus we only need to prove that $\sum_{j=1}^r\alpha_j\sum_{k=1}^n\tau_k(z_j)=\sum_{k=1}^n\sum_{j=1}^r\alpha_j\tau_k(z_j)$ converges in distribution to a Gaussian random variable. By Lemma \ref{lep2}, it suffices to verify conditions $(i)$ and $(ii)$. Applying  Lemma \ref{lec1}, we have
	\begin{align*}
		\sum_{k=1}^n\rre\left|\sum_{j=1}^r\alpha_j\tau_k(z_j)\right|^4\le&\frac C{n^4}\sum_{k=1}^n\sum_{j=1}^r\alpha_j^4\left[\rre|\delta_k(z_j)|^4+\rre|\varepsilon_k(z_j)|^4\right]\to0,
	\end{align*}
	which implies that condition $(ii)$ of Lemma \ref{lep2} is satisfied. Subsequently, we need to find the limit in probability of $$\Phi(z_1,z_2)\triangleq\sum_{k=1}^n\rre_{k-1}\left[\tau_k(z_1)\tau_k(z_2)\right]$$
	for $z_1,z_2$ with nonzero imaginary parts. It is obvious that
	\begin{align*}
		\Phi(z_1,z_2)=n^{-2}\frac{\partial^2}{\partial z_2\partial z_1}\sum_{k=1}^n\rre_{k-1}\left[\rre_{k}\left(\tilde{\beta}_k(z_1)\varepsilon_k(z_1)\right)\rre_{k}
		\left(\tilde{\beta}_k(z_2)\varepsilon_k(z_2)\right)\right].
	\end{align*}
	For the same reason as the analysis on page 571 in \cite{Bai2004a}, it is enough to prove that
	$$n^{-2}\sum_{k=1}^n\rre_{k-1}\left[\rre_{k}\left(\tilde{\beta}_k(z_1)\varepsilon_k(z_1)\right)\rre_{k}
	\left(\tilde{\beta}_k(z_2)\varepsilon_k(z_2)\right)\right]$$
	converges in probability to a constant and to find this limit. 
	
	Applying the martingale decomposition method as (\ref{c6}), we have for $l=1,2$
	\begin{align}\label{eq13}
		\begin{split}
			&\rre|\tilde{\beta}_k(z_l)-b_n(z_l)|^2\\
			\le&\frac C{n^2}\rre\left|\rtr\left[\bss{\bM_k^{ - 1}}\bss\bT_n\right]-\rtr\left[\bss{\bM^{ - 1}}\bss\bT_n\right]\right|^2\\&+\frac C{n^2}\rre\left|\rtr\left[\bss{\bM^{ - 1}}\bss\bT_n\right]-\rre\rtr\left[\bss{\bM^{ - 1}}\bss\bT_n\right]\right|^2
			\le\frac Cn\to0.
		\end{split}
	\end{align}
	From the above inequality, we get
	\begin{align*}
		&\rre\Bigg|n^{-2}\sum_{k=1}^n \rre_{k-1}\bigg[\rre_{k}\left(\tilde\beta_k(z_1)\varepsilon_k(z_1)\right)\rre_{k}\left(\tilde\beta_k(z_2)\varepsilon_k(z_2)\right)\\
		&-b_n(z_1)b_n(z_2)\rre_{k}\left(\varepsilon_k(z_1)\right)\rre_{k}\left(\varepsilon_k(z_2)\right)\bigg]\Bigg|\le\frac Cn\to0,
	\end{align*}
	which yields
	\begin{align*}
		&n^{-2}\sum_{k=1}^n \rre_{k-1}\bigg[\rre_{k}\left(\tilde\beta_k(z_1)\varepsilon_k(z_1)\right)\rre_{k}\left(\tilde\beta_k(z_2)\varepsilon_k(z_2)\right)\\
		&-b_n(z_1)b_n(z_2)\rre_{k}\left(\varepsilon_k(z_1)\right)\rre_{k}\left(\varepsilon_k(z_2)\right)\bigg]\xrightarrow{i.p.}0.
	\end{align*}
	Therefore, our goal turns to find the limit in probability of
	\begin{align*}
		n^{-2}b_n(z_1)b_n(z_2)\sum_{k=1}^n \rre_{k-1}\left[\rre_k\left(\varepsilon_k(z_1)\right)\rre_{k}\left(\varepsilon_k(z_2)\right)\right].
	\end{align*}
Using Lemma \ref{lec2}, we have
\begin{align*}
	&\rre_{k-1}\left[\rre_k\left(\varepsilon_k(z_1)\right)\rre_{k}\left(\varepsilon_k(z_2)\right)\right]=\mathcal{G}_{1k}(z_1,z_2)+\mathcal{G}_{2k}(z_1,z_2)+(\nu_4-3)\mathcal{G}_{3k}(z_1,z_2),
\end{align*}
where
\begin{align*}
	\mathcal{G}_{1k}(z_1,z_2)=&2\rtr\left[\rre_{k}{\bM_k^{ - 1}}(z_1)\mb\Psi_n\rre_{k}{\bM_k^{ - 1}}(z_2)\mb\Psi_n\right],
\end{align*}
\begin{align*}
	\mathcal{G}_{2k}(z_1,z_2)=\mathcal{I}_2\(\bss\rre_{k}{\bM_k^{ - 1}}(z_1)\bss,\bss\rre_{k}{\bM_k^{ - 1}}(z_2)\bss,\mb\Sigma_n,\mathbb{P}_n\).
	\end{align*}
	and 
	\begin{align*}
	\mathcal{G}_{3k}(z_1,z_2)=\mathcal{II}\(\bss\rre_{k}{\bM_k^{ - 1}}(z_1)\bss,\bss\rre_{k}{\bM_k^{ - 1}}(z_2)\bss,\mb\Sigma_n,\mathbb{P}_n\).
	\end{align*}
	Consequently, our goal is transformed into studying
	\begin{align*}
		n^{-2}b_n(z_1)b_n(z_2)\sum_{k=1}^n\Big(\mathcal{G}_{1k}(z_1,z_2)+\mathcal{G}_{2k}(z_1,z_2)+(\nu_4-3)\mathcal{G}_{3k}(z_1,z_2)\Big).
	\end{align*}

	At first, we shall derive the  limit of  $$n^{-2}b_n(z_1)b_n(z_2)\sum_{k=1}^n\mathcal{G}_{1k}(z_1,z_2).$$ 
	Let  
	${\hat\bG_n}(z)=\frac{n-1}nb_n(z)\bss\bT_n\bss-z\bI_p.$
	Then
	\begin{align}\label{cc2}
		\begin{split}
			\bM^{-1}_k(z)=&\hat\bG_n^{-1}(z)-\frac1n\sum_{j\neq k}b_n(z)\hat\bG_n^{-1}(z)\bss\(\bbD_j\by_j\by_j^T\bbD_j-\bT_n\)\bss\bM_{jk}^{-1}(z)\\
			&-\frac1n\sum_{j\neq k}\(\beta_{jk}(z)-b_n(z)\)\hat\bG_n^{-1}(z)\bss\bbD_j\by_j\by_j^T\bbD_j\bss\bM_{jk}^{-1}(z)\\
			&-\frac1nb_n(z)\sum_{j\neq k}\hat\bG_n^{-1}(z)\bss\bT_n\bss\(\bM_{jk}^{-1}(z)-\bM_k^{-1}(z)\)\\
			\triangleq&\hat\bG_n^{-1}(z)+\mathcal{A}_1(z)+\mathcal{A}_2(z)+\mathcal{A}_3(z).
		\end{split}
	\end{align}
	
	Let $\bC$ be $p\times p$ and let $\|\bC\|$ denote a nonrandom matrix bound on the spectral norm of $\bC$ for all parameters governing $\bC$ and under all realizations of $\bC$. By Lemma \ref{lec1}, one gets
	\begin{align}\label{cc3}
		\rre\left|\rtr\(\mathcal{A}_1(z)\bC\)\right|\le &\frac C{\sqrt n}\sum_{j\neq k}\rre^{1/2}\left\|\bM_{jk}^{-1}(z)\bC\hat\bG_n^{-1}(z)\right\|^2
		\le C{\sqrt n}
	\end{align}
	and
	\begin{align}\label{cc4}
		\begin{split}
			\rre\left|\rtr\(\mathcal{A}_2(z)\bC\)\right|\le &\frac Cn\sum_{j\neq k}\rre^{1/2}\left|\beta_{jk}(z)-b_n(z)\right|^2\rre^{1/2}\left|\by_j^T\by_j\right|^2\\
			\le& C\sum_{j\neq k}\rre^{1/2}\left|\beta_{jk}(z)-b_n(z)\right|^2= o(n).
		\end{split}
	\end{align}
	It is apparent that from (\ref{c4})
	\begin{align}\label{cc5}
		\rre\left|\rtr\(\mathcal{A}_3(z)\bC\)\right|\le &\frac Cn\sum_{j\neq k}\rre\left|\rtr\left(\bC\hat\bG_n^{-1}(z)\bT_n\(\bM_{jk}^{-1}(z)-\bM_k^{-1}(z)\)\right)\right|\le C.
	\end{align}
	
	Let $$\breve\bM_k(z)=\frac1n\sum_{j<k}\bss\bbD_j\by_j\by_j^T\bbD_j\bss+\frac1n\sum_{j>k}\bss\breve\bbD_j\breve\by_j\breve\by_j^T\breve\bbD_j\bss-z\bI_p$$  where $\breve\by_j,\breve\bbD_j$ are i.i.d. copies of $\by_j,\bbD_j, j = 1,\cdots,n.$
	From (\ref{cc2})-(\ref{cc5}),  one can find that
	\begin{align*}
		\begin{split}
			&\frac1n\rre_k\left[z_1\rtr\left({\bM_k^{ - 1}}(z_1)\bT_n\right)-z_2\rtr\left({\bM_k^{ - 1}}(z_2)\bT_n\right)\right]\\
			&-\frac1n\left[z_1\rtr\left({\hat\bG_n^{ - 1}}(z_1)\bT_n\right)-z_2\rtr\left({\hat\bG_n^{ - 1}}(z_2)\bT_n\right)\right]\xrightarrow{p} 0.
		\end{split}
	\end{align*}
	Furthermore, we know
	\begin{align*}
		\begin{split}
			\frac1n\left[\rtr\(z_1{\hat\bG_n^{ - 1}}(z_1)-z_2{\hat\bG_n^{ - 1}}(z_2)\)\mb\Psi_n\right]\to \frac{\underline m(z_1)-\underline m(z_2)}{\underline m(z_1)\underline m(z_2)}-\(z_1-z_2\).
		\end{split}
	\end{align*}
	Hence, it yields 
	\begin{align*}
		\frac1n\rre_k\left[\rtr\(z_1{\bM_k^{ - 1}}(z_1)-z_2{\bM_k^{ - 1}}(z_2)\)\mb\Psi_n\right]\xrightarrow{p} \frac{\underline m(z_1)-\underline m(z_2)}{\underline m(z_1)\underline m(z_2)}-\(z_1-z_2\).
	\end{align*}
	
	By calculation and the fact that
	\begin{align*}
		\beta_{jk}(z)=b_n(z)+o_{p}(1),
	\end{align*} we get
	\begin{align*}
		&\frac1n\rre_k\left[z_1\rtr\left(\bss{\bM_k^{ - 1}}(z_1)\bss\bT_n\right)-z_2\rtr\left({\bss\breve\bM_k^{ - 1}}(z_2)\bss\bT_n\right)\right]\\
		=&\frac1n\rre_k\rtr\left[\bss\bM_k^{- 1}(z_1)\left(z_1\breve\bM_k(z_2)-z_2\bM_k(z_1)\right)\breve\bM_k^{- 1}(z_2)\bss\bT_n\right]\\
		=&\frac1{n^2}\rre_k\rtr\Bigg[\bss\bM_k^{- 1}(z_1)\bss\Bigg((z_1-z_2)\sum_{j=1}^{k-1}\bbD_j\by_j\by_j^T\bbD_j+z_1\sum_{j=k+1}^{n}\breve\bbD_j\breve\by_j\breve\by_j^T\breve\bbD_j\\
		&\qquad\qquad\quad-z_2\sum_{j=k+1}^{n}\bbD_j\by_j\by_j^T\bbD_j\Bigg)\bss\breve\bM_k^{- 1}(z_2)\bss\bT_n\Bigg].
	\end{align*}
	The above quantity  can be split into \begin{align*}
 &\frac{(z_1-z_2)}{n^2}\sum_{j=1}^{k-1}\rre_k\left(\beta_{jk}(z_1)\breve\beta_{jk}(z_2)\by_j^T\bbD_j\bss\breve\bM_{jk}^{- 1}(z_2)\bss\bT_n\bss\bM_{jk}^{- 1}(z_1)\bss\bbD_j\by_j\right)\\
		&+\frac{z_1}{n^2}\sum_{j=k+1}^{n}\rre_k\(\breve\beta_{jk}(z_2)\breve\by_j^T\breve\bbD_j\bss\breve\bM_{jk}^{- 1}(z_2)\bss\bT_n\bss\bM_k^{- 1}(z_1)\bss\breve\bbD_j\breve\by_j\)\\
		&-\frac{z_2}{n^2}\sum_{j=k+1}^{n}\rre_k\(\beta_{jk}(z_1)\by_j^T\bbD_j\bss\breve\bM_{k}^{- 1}(z_2)\bss\bT_n\bss\bM_{jk}^{- 1}(z_1)\bss\bbD_j\by_j\),	
 \end{align*}
 which is proved to have a limit in probability as
 \begin{align*}
&\frac{kz_1z_2\underline m_n^0(z_1)\underline m_n^0(z_2)(z_1-z_2)}{n^2}\rre_k\rtr\left(\breve\bM_{k}^{- 1}(z_2)\mb\Psi_n\bM_{k}^{- 1}(z_1)\mb\Psi_n\right)\\
		&+\frac{z_1z_2(\underline m_n^0(z_1)-\underline m_n^0(z_2))(n-k)}{n^2}\rre_k\rtr\(\breve\bM_{k}^{- 1}(z_2)\mb\Psi_n\bM_k^{- 1}(z_1)\mb\Psi_n\).	
 \end{align*}
	This implies
	\begin{align*}
		&\frac{b(z_1)b(z_2)}{n^{2}}\sum_{k=1}^n\mathcal{G}_{1k}(z_1,z_2)	\to2\int_0^1\frac{d_1(z_1,z_2)}{1-td_1(z_1,z_2)}dt.
	\end{align*}
	
	We are now in position to  show the  limit of  $n^{-2}b_n(z_1)b_n(z_2)\sum_{k=1}^n\mathcal{G}_{2k}(z_1,z_2)$ and $n^{-2}b_n(z_1)b_n(z_2)\sum_{k=1}^n\mathcal{G}_{3k}(z_1,z_2).$
	By (\ref{cc2})-(\ref{cc5}) and
	\begin{align}\label{eqq}
		\rtr\(\bA^T\(\bB\circ\bC\)\)=\rtr\(\(\bA^T\circ\bB^T\)\bC\).
	\end{align}
We find that
	\begin{align*}
		&\frac1n \rtr\left[\bbP^{(2)}\(\bss\rre_{k}{\bM_k^{ - 1}}(z_1)\bss\circ\bSi\)\bbP^{(2)}\(\bss\rre_{k}{\bM_k^{ - 1}}(z_2)\bss\circ\bSi\)\right]\\
		=	&\frac1n \rtr\left[\bss\rre_{k}{\bM_k^{ - 1}}(z_1)\bss\(\(\bbP^{(2)}\(\bss\rre_{k}{\bM_k^{ - 1}}(z_2)\bss\circ\bSi\)\bbP^{(2)}\)\circ\bSi\)\right]\\
		=	&\frac1n \rtr\left[\bss\hat\bG_n^{-1}(z_1)\bss\(\(\bbP^{(2)}\(\bss\rre_{k}{\bM_k^{ - 1}}(z_2)\bss\circ\bSi\)\bbP^{(2)}\)\circ\bSi\)\right]\\
		&+\frac1n \rtr\left[\bss\rre_{k}\mathcal{A}_1(z_1)\bss\(\(\bbP^{(2)}\(\bss\rre_{k}{\bM_k^{ - 1}}(z_2)\bss\circ\bSi\)\bbP^{(2)}\)\circ\bSi\)\right]+o_p(1)\\
		=
		&\frac1n \rtr\left[\(\bss\hat\bG_n^{-1}(z_1)\bss\circ\bSi\)\bbP^{(2)}\(\bss\hat\bG_n^{-1}(z_2)\bss\circ\bSi\)\bbP^{(2)}\right]+o_p(1),
	\end{align*}
	where in the last equality we use the fact
	\begin{align*}
		&\frac1n \rtr\left[\bss\rre_{k}\mathcal{A}_1(z_1)\bss\(\(\bbP^{(2)}\(\bss{\bM_k^{ - 1}}(z_2)\bss\circ\bSi\)\bbP^{(2)}\)\circ\bSi\)\right]\\
		=&\frac{1}{n}\sum_{i,t}\be_i^T\bss\rre_{k}\mathcal{A}_1(z_1)\bss\be_t\be_t^T
		\bss\bM_{k}^{ - 1}(z_2)\bss\be_i\be_t^T\bbP^{(2)}\be_t\be_i^T\bbP^{(2)}\be_i\sigma_{ti}^2\\
		=&-\frac{b^2(z)}{n^3}\sum_{j< k}\sum_{i,t}\be_i^T\bss\hat\bG_n^{-1}(z_1)\bss\(\bbD_j\by_j\by_j^T\bbD_j-\bT_n\)\bss\rre_k\bM_{jk}^{-1}(z_1)\bss\be_t\\
		&\qquad\cdot\be_t^T\bss
		\bM_{jk}^{ - 1}(z_2)\mb\Theta_n\bbD_j\by_j\by_j^T\bbD_j\mb\Theta_n\bM_{jk}^{ - 1}(z_2)\bss\be_i\be_t^T\bbP^{(2)}\be_t\be_i^T\bbP^{(2)}\be_i\sigma_{ti}^2+o_p(1)\\
		=&-\frac{b^2(z)}{n^3}\sum_{j< k}\sum_{i,t}\be_i^T\bss\hat\bG_n^{-1}(z_1)\bss\bbD_j\by_j\by_j^T\bbD_j\bss\rre_k\bM_{jk}^{-1}(z_1)\bss\be_t\\
		&\qquad\cdot\be_t^T\bss
		\bM_{jk}^{ - 1}(z_2)\mb\Theta_n\bbD_j\by_j\by_j^T\bbD_j\mb\Theta_n\bM_{jk}^{ - 1}(z_2)\bss\be_i\be_t^T\bbP^{(2)}\be_t\be_i^T\bbP^{(2)}\be_i\sigma_{ti}^2+o_p(1)\\
		=&-\frac{b^2(z)}{n^3}\sum_{j< k}\sum_{i,t}\be_t^T
		\bss\bM_{jk}^{ - 1}(z_2)\bss\bbD_j\bSi\bbD_j\bss\rre_k\bM_{jk}^{-1}(z_1)\bss\be_t\\
		&\qquad\cdot\be_i^T\bss\hat\bG_n^{-1}(z_1)\bss\bbD_j\bSi\bbD_j\bss\bM_{jk}^{ - 1}(z_2)\bss\be_i\be_t^T\bbP^{(2)}\be_t\be_i^T\bbP^{(2)}\be_i\sigma_{ti}^2+o_p(1),
	\end{align*}
	which tends to zero in probability.
Taking the same procedures to analyse the other terms, it follows that
\begin{align*}
	&n^{-2}b(z_1)b(z_2)\sum_{k=1}^n\mathcal{G}_{2k}(z_1,z_2)\\
	=&n^{-2}b(z_1)b(z_2)\sum_{k=1}^n\mathcal{I}_2\(\bss\hat\bG_n^{-1}(z_1)\bss,\bss\hat\bG_n^{-1}(z_2)\bss,\bSi_n,\mathbb{P}_n\)\\
	&+n^{-2}b(z_1)b(z_2)\sum_{k=1}^n\mathcal{II}\(\bss\hat\bG_n^{-1}(z_1)\bss,\bss\hat\bG_n^{-1}(z_2)\bss,\bSi_n,\mathbb{P}_n\)+o_p(1)\\
	=&\underline m_n^0(z_1)\underline m_n^0(z_2)d_2(z_1,z_2)+o_p(1).	
\end{align*}
Therefore, we conclude 
\begin{align*}
	&\Phi(z_1,z_2)\xrightarrow{p}2\frac{\partial^2}{\partial z_2\partial z_1}\int_0^1\frac{d_1(z_1,z_2)}{1-td_1(z_1,z_2)}dt+\frac{\partial^2}{\partial z_2\partial z_1}\underline m(z_1)\underline m(z_2)d_2(z_1,z_2).
\end{align*}
Since
\begin{align*}
\frac{\partial^2}{\partial z_2\partial z_1}\int_0^1\frac{d_1(z_1,z_2)}{1-td_1(z_1,z_2)}dt=\frac{\underline m'(z_1)\underline m'(z_2)}{\(\underline m(z_2)-\underline m(z_1)\)^2}-\frac1{\(z_1-z_2\)^2},
\end{align*}
we obtain
\begin{align*}
	&\Phi(z_1,z_2)\xrightarrow{p}\frac{2\underline m'(z_1)\underline m'(z_2)}{\(\underline m(z_2)-\underline m(z_1)\)^2}-\frac2{\(z_1-z_2\)^2}+\frac{\partial^2}{\partial z_2\partial z_1}\underline m(z_1)\underline m(z_2)d_2(z_1,z_2).
\end{align*}

	\subsection{Tightness of $M_{n1}(z)$}
	
	In this section, our aim is to prove tightness of the sequence of random functions $ M_{n1}(z)$ for $z\in\mathcal{C}_n$ defined in (\ref{aaa}). Similarly to Section 3 of \cite{Bai2004a}, it suffices to show  the boundness of
	\begin{align*}
\sup_{n;z_1,z_2\in\mathcal{C}_n}\frac{\rre\left|M_{n1}(z_1)-M_{n1}(z_2)\right|^2}{|z_1-z_2|^2}.
	\end{align*}
	Using the high probability bound  of the spectral norm of $\bS_n$, it can be verified that the moments of $\|\bM^{-1}(z)\|$, $\|\bM^{-1}_j(z)\|$, and $\|\bM^{-1}_{jk}(z)\|$ are bounded in $n$ and $z\in\mathcal{C}_n$. Also, from
$	\frac1n\by_j^T\bbD_j\bss\bM^{-1}\bss\bbD_j\by_j=1-\beta_j(z),	
$
	we know
	\begin{align}\label{eqq1}
		\left|\beta_j(z)\right|
		\le1+\frac{C_1}{n}\by_j^T\by_j+{C_2}{n}{\rm P}\(\|\bS_n\|>\eta_r \ {\rm or} \ \lambda_{\min}^{\bS_n}<\eta_l\)
	\end{align}
	which implies for $\ell\ge1$
	\begin{align}\label{eqq6}
		\rre\left|\beta_j(z)\right|^{\ell}\le&C_{\ell}+C_{\ell}n^{-1}\le C.
	\end{align}
	Denote 
	$$ \rho_j(z)=\by_j^T\bbD_j\bss\bM_j^{-1}\bss\bbD_j\by_j-\rre\rtr\left(\bss\bM^{-1}\bss\bT_n\right).$$
	By Lemma \ref{lec1} and Lemma \ref{lemma:8}, we have
	\begin{align}\label{eqq2}
		\begin{split}
			\rre\left|\rho_j(z)\right|^{\ell}\le&C_{\ell}	\rre\left|\by_j^T\bbD_j\bss\bM_j^{-1}\bss\bbD_j\by_j-\rtr\left(\bss\bM_j^{-1}\bss\bT_n\right)\right|^{\ell}\\
			&+C_{\ell}	\rre\left|\rtr\left(\bss\bM_j^{-1}\bss\bT_n\right)-\rtr\left(\bss\bM^{-1}\bss\bT_n\right)\right|^{\ell}\\
			&+C_{\ell}\rre	\left|\rtr\left(\bss\bM^{-1}\bss\bT_n\right)-\rre\rtr\left(\bss\bM^{-1}\bss\bT_n\right)\right|^{\ell}\\
			\le& C_{\ell}\left[n^{\ell/2}+\nu_{2\ell}n\right].	
		\end{split}
	\end{align}
	Note that
	\begin{align}
		\beta_j(z)=&b_n(z)-\frac1n \beta_j(z)b_n(z)\rho_j(z)\label{eq21}\\
		=&b_n(z)-\frac1n b^2(z)\rho_j(z)+\frac1{n^2}\beta_j(z)b^2(z)\rho_j^2(z)\label{al:1}.
	\end{align}
	Applying (\ref{eqq6}), (\ref{eqq2}), and (\ref{eq21}), we deduce for all large $n$
	\begin{align}\label{eqq3}
		|b_n(z)|\le&|\rre\beta_j(z)|+\frac1n\left| b_n(z)\rre( \beta_j(z)\rho_j(z))\right|
		\le C_1+C_2|b_n(z)|n^{-1/2}\le \frac{C_1}{1-C_2n^{-1/2}}.
	\end{align}
	Write
	\begin{align*}
		m_n(z_1)-m_n(z_2)=\frac1p\rtr\left(\bM^{-1}(z_1)-\bM^{-1}(z_2)\right)=\frac1p\left(z_1-z_2\right)\rtr\(\bM^{-1}(z_1)\bM^{-1}(z_2)\).
	\end{align*}
	Then we have
	\begin{align*}
		\frac{M_{n1}(z_1)-M_{n1}(z_2)}{z_1-z_2}
		=&\sum_{j=1}^n\left(\rre_j-\rre_{j-1}\right)\rtr\(\bM^{-1}(z_1)\bM^{-1}(z_2)\)\\
		=&\sum_{j=1}^n\left(\rre_j-\rre_{j-1}\right)\rtr\left(\bM^{-1}(z_1)\bM^{-1}(z_2)-\bM_j^{-1}(z_1)\bM_j^{-1}(z_2)\right).
	\end{align*}
	We split the above quantity into three parts as follows \begin{align*}
	&\sum_{j=1}^n\left(\rre_j-\rre_{j-1}\right)\rtr\Bigg[\left(\bM^{-1}(z_1)-\bM_j^{-1}(z_1)\right)\left(\bM^{-1}(z_2)-\bM_j^{-1}(z_2)\right)\Bigg]\\
		&+\sum_{j=1}^n\left(\rre_j-\rre_{j-1}\right)\rtr\Bigg[\left(\bM^{-1}(z_1)-\bM_j^{-1}(z_1)\right)\bM_j^{-1}(z_2)\Bigg]\\
		&+\sum_{j=1}^n\left(\rre_j-\rre_{j-1}\right)\rtr\Bigg[\bM_j^{-1}(z_1)\left(\bM^{-1}(z_2)-\bM_j^{-1}(z_2)\right)\Bigg]\\
		=&\frac1{n^2}\sum_{j=1}^n\left(\rre_j-\rre_{j-1}\right)\beta_j(z_1)\beta_j(z_2)\left(\by_j^T\bbD_j\bss\bM_j^{-1}(z_1)\bM_j^{-1}(z_2)\bss\bbD_j\by_j\right)^2\\
		&-\frac1{n}\sum_{j=1}^n\left(\rre_j-\rre_{j-1}\right)\beta_j(z_1)\by_j^T\bbD_j\bss\bM_j^{-2}(z_1)\bM_j^{-1}(z_2)\bss\bbD_j\by_j\\
		&-\frac1{n}\sum_{j=1}^n\left(\rre_j-\rre_{j-1}\right)\beta_j(z_2)\by_j^T
		\bbD_j\bss\bM_j^{-1}(z_1)\bM_j^{-2}(z_2)\bss\bbD_j\by_j\\
		\triangleq&\mathcal{P}_1+\mathcal{P}_2+\mathcal{P}_3.
	\end{align*}
It suffices to show that $\rre\left|\mathcal{P}_1+\mathcal{P}_2+\mathcal{P}_3\right|^2$ is bounded. In the following, we only prove that $\rre|\mathcal{P}_{1}|^2\le C$. The others are similar thus omitted.
	
	Using (\ref{eq21}), write
	\begin{align*}
		&\mathcal{P}_1\\
		=&\frac1{n^2}\sum_{j=1}^n b(z_1)b(z_2)\left(\rre_j-\rre_{j-1}\right)\left(\by_j^T\bbD_j\bss\bM_j^{-1}(z_1)\bM_j^{-1}(z_2)\bss\bbD_j\by_j\right)^2\\
		&-\frac1{n^3}\sum_{j=1}^n b(z_1)b(z_2)\left(\rre_j-\rre_{j-1}\right) \beta_j(z_2)\rho_j(z_2)\left(\by_j^T\bbD_j\bss\bM_j^{-1}(z_1)\bss\bM_j^{-1}(z_2)\bbD_j\by_j\right)^2\\
		&-\frac1{n^3}\sum_{j=1}^n b(z_1)\left(\rre_j-\rre_{j-1}\right) \beta_j(z_1)\beta_j(z_2)\rho_j(z_1)\left(\by_j^T\bbD_j\bss\bM_j^{-1}(z_1)\bM_j^{-1}(z_2)\bss\bbD_j\by_j\right)^2\\
		\triangleq&\mathcal{P}_{11}+\mathcal{P}_{12}+\mathcal{P}_{13}.
	\end{align*}
	Denote
$$\Delta_j(z_1,z_2)=\by_j^T\bbD_j\bss\bM_j^{-1}(z_1)\bM_j^{-1}(z_2)\bss\bbD_j\by_j-\rtr\(\bss\bM_j^{-1}(z_1)\bM_j^{-1}(z_2)\bss\bT_n\).	
$$
	By Lemma \ref{lec1}, we deduce that
	\begin{align*}
		\rre|\mathcal{P}_{11}|^2
		\le&\frac C{n^4}\sum_{j=1}^n \rre\Bigg|\left(\by_j^T\bbD_j\bss\bM_j^{-1}(z_1)\bM_j^{-1}(z_2)\bss\bbD_j\by_j\right)^2\\
		-&\left[\rtr\(\bss\bM_j^{-1}(z_1)\bM_j^{-1}(z_2)\bss\bT_n\)\right]^2\Bigg|^2\\
		\le&\frac C{n^4}\sum_{j=1}^n \rre\left|\Delta_j(z_1,z_2)\right|^4+\frac C{n^2}\sum_{j=1}^n \rre\left|\Delta_j(z_1,z_2)\right|^2
		\le C.
	\end{align*}
	Using  (\ref{eqq1}), (\ref{eqq2}), and Lemma \ref{lec1}, one finds
	\begin{align*}
		\rre|\mathcal{P}_{12}|^2\le
		&\frac C{n^6}\sum_{j=1}^n\rre\left| \beta_j(z_2)\rho_j(z_2)\left(\by_j^T\bbD_j\bss\bM_j^{-1}(z_1)\bM_j^{-1}(z_2)\bss\bbD_j\by_j\right)^2\right|^2\\
		\le&\frac C{n^6}\sum_{j=1}^n\rre\left| \rho_j(z_2)\Delta_j^2(z_1,z_2)\right|^2+\frac C{n^2}\sum_{j=1}^n\rre\left|\rho_j(z_2)\right|^2\\
		&+\frac C{n^8}\sum_{j=1}^n\rre\left|\(\by_j^T\by_j-\rtr\(\bSi\)\) \rho_j(z_2)\Delta_j^2(z_1,z_2)\right|^2\\
		&+\frac C{n^4}\sum_{j=1}^n\rre\left|\(\by_j^T\by_j-\rtr\(\bSi\)\)\rho_j(z_2)\right|^2+ C{n^9}{\rm P}\(\|\bS_n\|>\eta_r \ {\rm or} \ \lambda_{\min}^{\bS_n^1}<\eta_l\)
		\le C.
	\end{align*}
	$\mathcal{P}_{13}$ can be handled in the same way as $\mathcal{P}_{12}$. Hence, we obtain
	\begin{align*}
		\rre|\mathcal{P}_{1}|^2\le C.
	\end{align*}
	Finally, we conclude that
	\begin{align*}
		\sup_{n;z_1,z_2\in\mathcal{C}_n}\frac{\rre\left|M_{n1}(z_1)-M_{n1}(z_2)\right|^2}{|z_1-z_2|^2}\le\sup_{n;z_1,z_2\in\mathcal{C}_n}
		\rre\left|\mathcal{P}_1+\mathcal{P}_2+\mathcal{P}_3\right|^2\le C.
	\end{align*}
	This completes the proof of tightness.
	
	\subsection{Convergence of $M_{n2}(z)$}
	Firstly, it can be verified that $\bG_n^{-1}(z)$ is uniformly bounded on $\mathcal{C}_n$, i.e.,
	\begin{align}\label{al12}
		\sup_{n,\mathcal{C}_n}\left\|\bG_n^{-1}(z)\right\|<\infty .
	\end{align}
Then, note that
	\begin{align*}
		M_{n2}(z)=&p\rre m_n(z)-\rtr\(\bG_n^{-1}(z)\)+\rtr\(\bG_n^{-1}(z)\)-pm_n^0(z)\\
		=&p\rre m_n(z)-\rtr\(\bG_n^{-1}(z)\)+p\(b_n(z)+z{\underline m}_n^0(z)\)\int\frac{t}{\(b_n(z)t-z\)\(z{\underline m}_n^0(z)t+z\)}dH_n(t).
	\end{align*}
	By (\ref{ll1}) and (\ref{eq21}), it follows that
	\begin{align*}
		p\(b_n(z)+z{\underline m}_n^0(z)\)=&p\(-z\rre\underline m_n(z)+z{\underline m}_n^0(z)\)+\frac p{n^2}\sum_{k=1}^nb_n(z)\rre\beta_k(z)\rho_k(z)\\
		=&p\(-z\rre\underline m_n(z)+z{\underline m}_n^0(z)\)+o(1).
	\end{align*}
	Hence,  together with (\ref{ll2}), we obtain
	\begin{align}\label{imp}
		\begin{split}
			M_{n2}(z)
			=&-z\underline m_n^0(z)\left[1-y_n\int\frac{\(\underline m_n^0(z)\)^2t^2}{\({\underline m}_n^0(z)t+1\)^2}dH_n(t)\right]^{-1}\\
			&\qquad\cdot\(p\rre m_n(z)-\rtr\(\bG_n^{-1}(z)\)\)+o(1).
		\end{split}
	\end{align}	
	Recalling (\ref{c7}), we have
	\begin{align*}
		&p\rre m_n(z)-\rtr\(\bG_n^{-1}(z)\)\\
		=&-\frac1{n}\sum_{k=1}^n\rre\left[\beta_k(z)\left(\by_k^T\bbD_k\bss\bM_k^{-1}\bG_n^{-1}\bss\bbD_k\by_k-\rtr\bss\bM_k^{-1}\bG_n^{-1}\bss\bT_n\right)\right]\\
		&-\frac1{n}\sum_{k=1}^n\rre\left[\beta_k(z)\left(\rtr\bss\bM_k^{-1}\bG_n^{-1}\bss\bT_n-\rre\rtr\bss\bM_k^{-1}\bG_n^{-1}\bss\bT_n\right)\right]\\
		&-\frac1{n}\sum_{k=1}^n\bigg(\rre\beta_k(z)\bigg)\left[\rre\rtr\bss\bM_k^{-1}\bG_n^{-1}\bss\bT_n
		-\rre\rtr\bss\bM^{-1}\bG_n^{-1}\bss\bT_n\right]\notag\\
		&-\frac1{n}\sum_{k=1}^n\bigg(\rre\beta_k(z)-b_n(z)\bigg)\rre\rtr\bss\bM^{-1}\bG_n^{-1}\bss\bT_n\notag\\
		\triangleq&\mathcal{J}_1+\mathcal{J}_2+\mathcal{J}_3+\mathcal{J}_4\notag.
	\end{align*}
	From (\ref{al:1}),
	$\mathcal{J}_1$ equals to the sum of $$\mathcal{J}_{11}=\frac{b^2(z)}{n^2}\sum_{k=1}^n \rre\(\rho_k(z)\left(\by_k^T\bbD_k\bss\bM_k^{-1}\bG_n^{-1}\bss\bbD_k\by_k-\rtr\bM_k^{-1}\bG_n^{-1}\mb\Psi_n\right)\)$$ and $$\mathcal{J}_{12}=\frac{b^2(z)}{n^3}\sum_{k=1}^n\rre\bigg[\beta_k (z)\rho_k^2(z) \left(\by_k^T\bbD_k\bss\bM_k^{-1}\bG_n^{-1}\bss\bbD_k\by_k-\rtr\bM_k^{-1}\bG_n^{-1}\mb\Psi_n\right)\bigg].$$ 
	Applying Lemma \ref{lec1}, (\ref{eqq1}), and (\ref{al12}), we get
	\begin{align*}
		&\left|\mathcal{J}_{12}\right|\\ \le&\frac C{n^3}\sum_{k=1}^n\rre^{1/2}\left|\rho_k(z) \right|^4\rre^{1/2} \left|\by_k^T\bbD_k\bss\bM_k^{-1}\bG_n^{-1}\bss\bbD_k\by_k-\rtr\left(\bM_k^{-1}\bG_n^{-1}\mb\Psi_n\right)\right|^2\\
		&+\frac C{n^4}\sum_{k=1}^n\rre^{1/4}\left|\by_k^T\by_k-\rtr\(\bSi\)\right|^4\rre^{1/4}\left|\rho_k(z) \right|^8\\
		&\qquad\times
		\left(\rre \left|\by_k^T\bbD_k\bss\bM_k^{-1}\bG_n^{-1}\bss\bbD_k\by_k-\rtr\left(\bM_k^{-1}\bG_n^{-1}\mb\Psi_n\right)\right|^2\right)^{1/2}\\
		&+Cn^5{\rm P}\(\|\bS_n\|>\eta_r \ {\rm or} \ \lambda_{\min}^{\bS_n^1}<\eta_l\)
		\le \frac C{n}\to0.
	\end{align*}
	Moreover,  one finds
	\begin{align*}
		\mathcal{J}_{11}
		=&\frac{b^2(z)}{n^2}\sum_{k=1}^n\rre\bigg[\(\by_k^T\bbD_k\bss\bM_k^{-1}\bss\bbD_k\by_k-\rtr\left(\bM_k^{-1}\mb\Psi_n\right)\)\\
		&\qquad\cdot\left(\by_k^T\bbD_k\bss\bM_k^{-1}\bG_n^{-1}\bss\bbD_k\by_k-\rtr\left(\bM_k^{-1}\bG_n^{-1}\mb\Psi_n\right)\right)\bigg]+o(1).
	\end{align*}
	Hence, by Lemma \ref{lec2}, we see
	\begin{align*}
		\mathcal{J}_{1}
		=&\frac{b^2(z)}{n}\Bigg[\mathcal{I}_1\(\bss\bM^{-1}\bss,\bss\bM^{-1}\bG_n^{-1}\bss,\bSi_n,\mathbb{P}_n\)\\
		&+\mathcal{I}_2\(\bss\bM^{-1}\bss,\bss\bM^{-1}\bG_n^{-1}\bss,\bSi_n,\mathbb{P}_n\)\\&+\mathcal{II}\(\bss\bM^{-1}\bss,\bss\bM^{-1}\bG_n^{-1}\bss,\bSi_n,\mathbb{P}_n\)\Bigg]+o(1).	
\end{align*}
	Applying (\ref{eq21}), it follows that
	\begin{align*}
		\mathcal{J}_2	=
		&\frac{b_n(z)}{n^2}\sum_{k=1}^n\rre\left[\beta_k(z)\rho_k(z)\left(\rtr\left(\bM_k^{-1}\bG_n^{-1}\mb\Psi_n\right)-\rre\rtr\left(\bM_k^{-1}\bG_n^{-1}\mb\Psi_n\right)\right)\right].
	\end{align*}
	By martingale difference method, (\ref{eqq6})-(\ref{eqq2}),  and Lemma \ref{lec1}, we have
	\begin{align*}
		\left|\mathcal{J}_{2}\right|\le	&\frac C{n^2}\sum_{k=1}^n\rre^{1/2}\left|\rtr\left(\bM_k^{-1}\bG_n^{-1}\mb\Psi_n\right)-\rre\rtr\left(\bM_k^{-1}\bG_n^{-1}\mb\Psi_n\right)\right|^2\rre^{1/4}\left|\beta_k(z)\right|^4\rre^{1/4}\left|\rho_k(z)\right|^4
		\le\frac C{ n}.
	\end{align*}
	This yields
$
		\mathcal{J}_2	=o(1).
$
	By (\ref{eq21}), we deduce
	\begin{align*}
		\mathcal{J}_3=
		&-\frac{b_n(z)}{n}\sum_{k=1}^n\left[\rre\rtr\left(\bM_k^{-1}\bG_n^{-1}\mb\Psi_n\right)
		-\rre\rtr\left(\bM^{-1}\bG_n^{-1}\mb\Psi_n\right)\right]+o(1)\\	
		=&\frac{b^2(z)}{n^2}\sum_{k=1}^n\rre\left[\rtr\left(\bM_k^{-1}\bG_n^{-1}\mb\Psi_n\bM_k^{-1}\mb\Psi_n\right)\right]+o(1)\\
		=&\frac{b^2(z)}{n}\rre\left[\rtr\left(\bM^{-1}\bG_n^{-1}\mb\Psi_n\bM^{-1}\mb\Psi_n\right)\right]+o(1),
	\end{align*}
	and
$
		\mathcal{J}_4=
		\frac{b_n(z)}{n^2}\sum_{k=1}^n\rre\bigg(\beta_k(z)\rho_k(z)\bigg)\rre\rtr\left(\bM^{-1}\bG_n^{-1}\mb\Psi_n\right)=o(1).
$
	Therefore, we conclude 
	\begin{align}\label{ll4}
		\begin{split}
			&p\rre m_n(z)-\rtr\(\bG_n^{-1}(z)\)
				=
	\frac{b^2(z)}{n}\rre\rtr\(\bM^{-1}\mb\Psi_n\bM^{-1}\bG_n^{-1}\mb\Psi_n\)
	-\frac{b^2(z)}{z^3}a_n(z)+o(1),
	\end{split}
\end{align}
where
	\begin{align*}
		a_n(z)=-\frac{z^3}{n}\Bigg[&\mathcal{I}_1\(\bss\bM^{-1}\bss,\bss\bM^{-1}\bG_n^{-1}\bss,\bSi_n,\mathbb{P}_n\)\\
		&+\mathcal{I}_2\(\bss\bM^{-1}\bss,\bss\bM^{-1}\bG_n^{-1}\bss,\bSi_n,\mathbb{P}_n\)\\&+(\nu_4-3)\mathcal{II}\(\bss\bM^{-1}\bss,\bss\bM^{-1}\bG_n^{-1}\bss,\bSi_n,\mathbb{P}_n\)\Bigg].	
\end{align*}	
  	Note that
	\begin{align*}
		\begin{split}
			\bM^{-1}(z)=&\bG_n^{-1}(z)-\frac{b_n(z)}n\sum_{ k=1}^n\bG_n^{-1}(z)\bss\(\bbD_k\by_k\by_k^T\bbD_k-\bT_n\)\bss\bM_{k}^{-1}(z)\\
			&-\frac{b_n(z)}{n^2}\sum_{ k=1}^n\beta_{k}(z)\rho_k(z)\bG_n^{-1}(z)\bss\bbD_k\by_k\by_k^T\bbD_k\bss\bM_{k}^{-1}(z)\\
			&-\frac{b_n(z)}{n^2}\sum_{ k=1}^n\beta_k(z)\bG_n^{-1}(z)\bT_n\bM_{k}^{-1}(z)\bss\bbD_k\by_k\by_k^T\bbD_k\bss\bM_k^{-1}(z)\\
			\triangleq&\bG_n^{-1}(z)+\mathcal{B}_1(z)+\mathcal{B}_2(z)+\mathcal{B}_3(z).
		\end{split}
	\end{align*}
	Let ${\bC}, {\bC_1}$ be $p\times p$ matrices, nonrandom and bounded on the spectral norm. Applying Lemma \ref{lec1}, (\ref{eqq6}), and (\ref{eqq2}), we obtain
	\begin{align*}
		\begin{split}
			\rre\bigg|\rtr\big({\mathcal{B}_2}(z)&{\bC}\big)\bigg|
			\le\frac C{n^2}\sum_{k=1}^n\rre^{1/2}\left|\rho_k(z)\right|^2\rre^{1/4}\left|\beta_k(z)\right|^4\\
			&\cdot\rre^{1/4}\left|\by_k^T\bbD_k\bss\bM_{k}^{-1}(z)\bG_n^{-1}(z)\bss\bbD_k\by_k\right|^4
			=O(n^{1/2})
		\end{split}
	\end{align*}
	and
	\begin{align*}
		\begin{split}
			&\rre\left|\rtr\left({\mathcal{B}_3}(z){\bC}\right)\right|\\\le&\frac C{n^2}\sum_{k=1}^n\rre^{1/2}\left|\by_k^T\bbD_k\bss\bM_k^{-1}(z)\bG_n^{-1}(z)\bT_n\bM_{k}^{-1}(z)\bss\bbD_k\by_k\right|^2\rre^{1/2}\left|\beta_k(z)\right|^2
			\le C.
		\end{split}
	\end{align*}
Furthermore, we get
	\begin{align*}
		&\rre\rtr\({\mathcal{B}}_1(z)\mb\Psi_n \bM^{-1}\bC_1\)\\
		=&-\frac{b_n(z)}n\sum_{ k=1}^n\rre\rtr\(\bss\(\bbD_k\by_k\by_k^T\bbD_k-\bT_n\)\bss\bM_{k}^{-1}\mb\Psi_n \bM_k^{-1}\bC_1\bG_n^{-1}\)\\
		&-\frac{b_n(z)}n\sum_{ k=1}^n\rre\rtr\(\bG_n^{-1}\bss\bbD_k\by_k\by_k^T\bbD_k\bss\bM_{k}^{-1}\mb\Psi_n \(\bM^{-1}-\bM_k^{-1}\)\bC_1\)\\
		&+\frac{b_n(z)}n\sum_{ k=1}^n\rre\rtr\(\bG_n^{-1}(z)\bss\bT_n\bss\bM_{k}^{-1}\mb\Psi_n \(\bM^{-1}-\bM_k^{-1}\)\bC_1\)
		\\
		=
		&\frac{b^2(z)}{n}\rre\rtr\(\bM^{-1}\mb\Psi_n\bM^{-1}\mb\Psi_n\)\rre\rtr\(\bM^{-1}\bC_1\bG_n^{-1}\mb\Psi_n\)+O(\sqrt n).
	\end{align*}
	Consequently,  we see
	\begin{align*}
		\begin{split}
			&\frac1n\rre\rtr\( \bM^{-1}\mb\Psi_n\bM^{-1}\mb\Psi_n\)\\
			=&\frac1{n}\rre\rtr\( \bG_n^{-1}\mb\Psi_n \bM^{-1}\mb\Psi_n\)+\frac1{n}\rre\rtr\( \mathcal{B}_1(z)\mb\Psi_n \bM^{-1}\mb\Psi_n\)+o(1)\\
			=&\frac1{n}\rtr\( \bG_n^{-1}\mb\Psi_n\)^2+\frac{b^2(z)}{n^2}\rre\rtr\(\bM^{-1}\mb\Psi_n \bM^{-1}\mb\Psi_n\)\rtr\(\bG_n^{-1}\mb\Psi_n\)^2+o(1)\\
			=&\(1-\frac{b^2(z)}{n}\rtr\(\bG_n^{-1}\mb\Psi_n\)^2\)^{-1}\frac1{n}\rtr\( \bG_n^{-1}\mb\Psi_n \)^2+o(1)
		\end{split}
	\end{align*}
	and
	\begin{align}\label{ll5}
		\begin{split}
			&\frac{b^2(z)}{n}\rre\rtr\( \bM^{-1}\mb\Psi_n \bM^{-1}\bG_n^{-1}\mb\Psi_n\)\\
			=&\frac{b^2(z)}{n}\rre\rtr\( \bG_n^{-1}\mb\Psi_n \bM^{-1}\bG_n^{-1}\mb\Psi_n\)+\frac{b^2(z)}{n}\rre\rtr\( \mathcal{B}_1(z)\mb\Psi_n \bM^{-1}\bG_n^{-1}\mb\Psi_n\)+o(1)\\
			=&\frac{b^2(z)}{n}\rtr\( \bG_n^{-1}\mb\Psi_n \bG_n^{-2}\mb\Psi_n\)\(1+\frac{b^2(z)}{n}\rre\rtr\(\bM^{-1}\mb\Psi_n \bM^{-1}\mb\Psi_n\)\)+o(1)\\
			=&\frac{b^2(z)}{n}\rtr\( \bG_n^{-1}\mb\Psi_n \bG_n^{-2}\mb\Psi_n\)\(1-\frac{b^2(z)}{n}\rtr\(\bG_n^{-1}\mb\Psi_n\)^2\)^{-1}+o(1).
		\end{split}
	\end{align}
	 It is also easy to see that

	\begin{align*}
		a_n(z)=-\frac{z^3}{n}\Bigg[&\mathcal{I}_1\(\bss\bG_n^{-1}\bss,\bss\bG_n^{-2}\bss,\bSi_n,\mathbb{P}_n\)\\
		&+\mathcal{I}_2\(\bss\bG_n^{-1}\bss,\bss\bG_n^{-2}\bss,\bSi_n,\mathbb{P}_n\)\\&+(\nu_4-3)\mathcal{II}\(\bss\bG_n^{-1}\bss,\bss\bG_n^{-2}\bss,\bSi_n,\mathbb{P}_n\)\Bigg]+o(1).	
	\end{align*}
	Combining  (\ref{ll4})-(\ref{ll5}) and the above equality, we find
	\begin{align*}
		\begin{split}
			&p\rre m_n(z)-\rtr\(\bG_n^{-1}(z)\)\\
			=&\frac{b^2(z)}{n}\rtr\( \bG_n^{-1}\mb\Psi_n \bG_n^{-2}\mb\Psi_n\)\(1-\frac{b^2(z)}{n}\rtr\(\bG_n^{-1}\mb\Psi_n\)^2\)^{-1}\\
			&+\frac{b^2(z)}{n}\mathcal{I}_2\(\bss\bG_n^{-1}\bss,\bss\bG_n^{-2}\bss,\bSi_n,\mathbb{P}_n\)\\&+\frac{(\nu_4-3)b^2(z)}{n}\mathcal{II}\(\bss\bG_n^{-1}\bss,\bss\bG_n^{-2}\bss,\bSi_n,\mathbb{P}_n\)
			+o(1).
		\end{split}
	\end{align*}
	From (\ref{imp}) and the above equality, it yields
	\begin{align*}
		M_{n2}(z)
		\to&\frac{y\int\frac{\underline m^3(z)t^2}{\({\underline m}(z)t+1\)^3}dH(t)}{\(1-y\int\frac{\underline m^2(z)t^2}{\({\underline m}(z)t+1\)^2}dH(t)\)^2}+\frac{\underline m^3(z)a(z)}{1-y\int\frac{\underline m^2(z)t^2}{\({\underline m}(z)t+1\)^2}dH(t)}.
	\end{align*}
	Utilizing the arguments presented on pages 592-593 of \cite{Bai2004a}, one could  establish the uniform equicontinuity of $M_{2n}(z)$ on $\mathcal{C}_n$. Omitting this analogous step, we conclude the proof of uniform convergence.
	
The proof of Theorem \ref{thm2} is thus complete.


\section{Lemmas}
The necessary lemmas are listed in this section.

	\begin{lem}[Theorem A.44 of Bai and Silverstein (2010)]\label{lemma:3}
		Let ${\mathbf A}$ and ${\mathbf B}$ be $p \times n$ complex matrices. Then, $$\left\| {{F^{{\mathbf A}{{\mathbf A}^ * }}} - {F^{{\mathbf B}{{\mathbf B}^ * }}}} \right\|_{KS} \le \frac{1}{p}{\rm rank}({\mathbf A} - {\mathbf B}).$$
	\end{lem}
	
	\begin{lem}[Lemma 2.4 of  \cite{Silverstein95S}]\label{le9}
		For $n\times n$ Hermitian $\bA$ and $\bB$,
		\begin{align*}
			\|F^{\bA}-F^{\bB}\|_{KS}\le\frac1n{\rm rank}(\bA-\bB).
		\end{align*}
	\end{lem}

	\begin{lem}[Corollary A.42 of \cite{BaiS10S}] \label{lemma:4}
		Let ${\mathbf A}$ and ${\mathbf B}$ be two $p \times n$  matrices and denote the ESD of ${\mathbf S} = {\mathbf A}{{\mathbf A}^ * }$ and $ \widetilde{\mathbf S} = {\mathbf B}{{\mathbf B}^ * }$   by ${F^{\mathbf S}}$ and ${F^{\widetilde{\mathbf S}}}$, respectively.  Then,
		$${L^4}\left({F^{\mathbf S}},{F^{\widetilde{\mathbf S}}}\right) \le \frac{2}{{{p^2}}}\left({\rm tr}\left({\mathbf A}{{\mathbf A}^ * } + {\mathbf B}{{\mathbf B}^ * }\right)\right)\left({\rm tr}\left[({\mathbf A} - {\mathbf B}\right){\left({\mathbf A} - {\mathbf B}\right)^ * }\right]),$$
		where $L\left(\cdot,\cdot \right)$  denotes the L\'{e}vy distance, that is,
		$$L\left({F^{\mathbf S}},{F^{\widetilde{\mathbf S}}}\right)=\inf\{\varepsilon:{F^{\mathbf S}\left(x-\varepsilon,y-\varepsilon\right)-\varepsilon}\le{F^{\widetilde{\mathbf S}}}\le{F^{\mathbf S}\left(x+\varepsilon,y+\varepsilon\right)+\varepsilon}\}.$$
	\end{lem}
	\begin{lem}[Bernstein's inequality]\label{lemma:10}
		If \ $\boldsymbol{\tau}_1,\cdots,\boldsymbol{\tau}_n$ are independent random variables with means zero and uniformly bounded by $b$, then, for any $\varepsilon > 0$,
		\begin{displaymath}
			{\rm P}\left(\left|\sum_{j=1}^{n}\boldsymbol{\tau}_j\right|\ge \varepsilon\right)\le 2{\rm exp}\left(-\varepsilon^2/\left[2\left(B_n^2+b\varepsilon\right)\right]\right),
		\end{displaymath}
		where $ B_n^2={\rm E}\left(\boldsymbol{\tau}_1+\cdots+\boldsymbol{\tau}_n\right)^2.$
	\end{lem}
	\begin{lem}[Rosenthal's inequality ]\label{lemma:5}
		Let $ {\mathbf  X}_i $ be independent with zero means,  then we have, for some constant $C_k$,
		$${\rm E}\left|\sum_i {\mathbf  X}_i\right|^{2k} \leq C_k\left(\sum_i{\rm E}\left|{\mathbf  X}_i\right|^{2k}+\left(\sum_i {\rm E}\left|{\mathbf  X}_i\right|^2\right)^k\right). $$
	\end{lem}
	
	\begin{lem}[Burkholder's inequality ]\label{lemma:8}
		Let $\{ {{\mathbf X}_k}\} $ be a complex martingale difference sequence with respect to the increasing $\sigma$-field. Then, for $p > 1,$ $${\rm E}{\left| {\sum_k {{{\mathbf  X}_k}} } \right|^p} \le {K_p}{\rm E}{\left({\sum_k{\left| {{{\mathbf  X}_k}} \right|} ^2}\right)^{p/2}}.$$
	\end{lem}
	
	\begin{lem}\label{le4}
		For rectangular matrices $\bA$, complex vector $\bf{a}$,
		\begin{align*}
			\left|\bf{a}^T\bA\bf{a}\right|\le\|\bA\|\bf{a}^T\bf{a}.
		\end{align*}
	\end{lem}
	
	\begin{lem}\label{le5}
		For Hermitian matrices $\bA,\bB$,
		\begin{align*}
			\left|\rtr\(\bA\bB\)^2\right|\le\|\bA\|^2\rtr\(\bB^2\).
		\end{align*}
	\end{lem}
	
	\begin{lem}[Lemma B.26 in \cite{BaiS10S}]\label{lep1}
		Let ${\bf A}=(a_{jk})$ be an $n\times n$ nonrandom matrix and $\bx=(x_1,\cdots,x_n)'$ be a random vector of independent entries. Assume that $\rre x_j=0$, $\rre|x_j|^2=1$ and $\rre|x_j|^l\le \nu_l$. Then for $p\ge1$,
		\begin{align*}
			&\rre\left|\bx^T{\bf A}\bx-\rtr{\bf A}\right|^p
			\le C_p\left[\left(\nu_4\rtr{\bf A}{\bf A}^T\right)^{p/2}+\nu_{2p}\rtr\left({\bf A}{\bf A}^T\right)^{p/2}\right],
		\end{align*}
		where $C_p$ is a constant depending on $p$ only.
	\end{lem}
\begin{lem}[Lemma 2.6 of \cite{Silverstein95S}]\label{le2}
	For $z=u+iv\in\mathbb{C}^+$, let $m_1(z),m_2(z)$ be the Stieltjes transforms of any two p.d.f.'s, $\bA$ and $\bB$ $n\times n$ with $\bA$ Hermitian nonnegative definite and $\br\in\mathbb{C}^n$. Then
	\begin{align*}
		(a)&\quad \|(m_1(z)\bA+\bI_n)^{-1}\|\le\max(4\|\bA\|/v,2),\\
		(b)&\quad |\br^*\bB(m_1(z)\bA+\bI_n)^{-1}\br-\br^*\bB(m_1(z)\bA+\bI_n)^{-1}\br|\\
		&\quad \quad \quad \le|m_2(z)-m_1(z)| \ \|r\|_2^2 \ \|\bB\| \ \|\bA\| \ \max(4\|\bA\|/v,2)^2,\notag
	\end{align*}
	where $\|\br\|_2$ denotes the Euclidean norm on $\br$.
\end{lem}
\begin{lem}[Kolmogorov's inequality for submartingales]\label{le7}
	If $\{X_j,\mathcal{F}_j,1\le j\le n\}$ is a martingale, then for each $l\ge1$ and $\alpha>0$,
	\begin{align*}
		\alpha^l{\rm P}\left(\max_{j\le n}|X_j|>\alpha\right)\le\rre|X_n|^l.
	\end{align*}
\end{lem}

\begin{lem}[Lemma 6.8 in \cite{BaiS10S}]\label{le8}
	If, for all $t>0$, $t^l{\rm P}(|X|>t)\le C$ for some positive $l$, then for any positive $q<l$,
	\begin{align*}
		\rre|X|^q\le C^{q/l}\left(\frac l{l-q}\right).
	\end{align*}
\end{lem}

	\begin{lem}[CLT for Martingale]\label{lep2}
		Suppose for each $n$, $\{Y_{n1},Y_{n2},\cdots,Y_{nr_n}\}$ is a real martingale difference sequence with respect to the increasing $\sigma$-field $\{\mathcal{F}_{nj}\}$ having second moments. If as $n\to\infty$,
		\begin{align*}
			&(i)\quad\quad\quad\quad\qquad\qquad\sum_{j=1}^{r_n}\rre(Y_{nj}^2|\mathcal{F}_{n,j-1})\xrightarrow{i.p.}\sigma^2,\qquad\qquad\qquad\qquad\qquad\qquad\\
			&{\rm where}\ \sigma^2\ {\rm is\ a\ positive\ constant}, \ {\rm and\ for\ each}\ \varepsilon\ge0,\\
			&(ii)\quad\quad\quad\quad\qquad\qquad\sum_{j=1}^{r_n}\rre(Y_{nj}^2I(|Y_{nj}\ge\varepsilon|))\to0,\qquad\qquad\qquad\qquad\qquad\qquad\\
			&{\rm then}\\
			&\quad\quad\quad\quad\qquad\qquad\qquad\sum_{j=1}^{r_n}Y_{nj}\xrightarrow{D}N(0,\sigma^2).\qquad\qquad\qquad\qquad
		\end{align*}
	\end{lem}






